\theoremstyle{plain}
\newtheorem{theorem}{Theorem}[section]
\newtheorem{cor}[theorem]{Corollary}
\newtheorem{prop}[theorem]{Proposition}
\newtheorem{lemma}[theorem]{Lemma}
\newcounter{proofcount}
\newtheoremstyle{claimstyle} 
{}                  
{}                  
{\itshape}                 
{}                         
{\itshape}        
{.}                        
{ }                        
{}                         
\theoremstyle{claimstyle}
\newtheorem{claim}{Claim}
\newtheorem*{claim*}{Claim}
\newenvironment{claimproof}[1][Proof of Claim \theclaim.] 
{%
	\proof[#1]%
	
}
{%
	\endproof%
}
\newenvironment{claimproof*}[1][Proof of Claim.] 
{%
	\proof[#1]%
	
}
{%
	\endproof%
}
\theoremstyle{theorem}
\newtheorem*{teoA}{Theorem A}
\newtheorem*{teoB}{Theorem B}
\newtheorem*{teoC}{Theorem C}
\theoremstyle{definition}
\newtheorem{remark}[theorem]{Remark}
\newtheorem{fact}[theorem]{Fact}
\newtheorem{definition}[theorem]{Definition}
\newtheorem{example}[theorem]{Example}
\newtheorem*{notation}{Notation}
\newcommand{\nc}{\newcommand}
\nc{\ZB}{\mathbb{Z}}
\nc{\QB}{\mathbb{Q}}
\nc{\NB}{\mathbb{N}}
\nc{\FB}{\mathbb{F}}
\nc{\UB}{\mathbb{U}}
\nc{\MB}{\mathbb{M}}
\nc{\CB}{\mathbb{C}}
\nc{\RB}{\mathbb{R}}
\nc{\ideal}{I}
\nc{\power}[1]{\mcl{P}(#1)}
\nc{\types}[1]{S(#1)}
\nc{\ftypes}[1]{S_{\varphi}(#1)}
\nc{\gl}[1]{\mathfrak{#1}}
\nc{\rst}[1]{\vert_{#1}}
\nc{\goo}[1]{G_{#1}^{00}}
\nc{\deftp}[2]{\tilde{\textrm{d}}_{#1} #2}
\nc{\deftpth}[2]{\tilde{\emph{d}}_{#1} #2}
\nc\inv{^{-1}}
\nc{\stab}{\operatorname{Stab}}
\nc{\dcl}{\operatorname{dcl}}
\nc{\acl}{\operatorname{acl}}
\nc{\tpp}{\operatorname{tp}}
\nc{\abs}[1]{\vert#1\vert}
\nc{\Aut}{\operatorname{Aut}}
\nc{\mcl}[1]{\mathcal{#1}}
\nc{\eps}{\varepsilon}
\nc{\dif}[1]{\operatorname{d}#1}
\nc{\wide}[1]{\textrm{w}_{#1}}
\def\Ind#1#2{#1\setbox0=\hbox{$#1x$}\kern\wd0\hbox to 0pt{\hss$#1\mid$\hss}
	\lower.9\ht0\hbox to 0pt{\hss$#1\smile$\hss}\kern\wd0}
\def\Notind#1#2{#1\setbox0=\hbox{$#1x$}\kern\wd0\hbox to 0pt{\mathchardef
		\nn="3236\hss$#1\nn$\kern1.4\wd0\hss}\hbox to 0pt{\hss$#1\mid$\hss}\lower.9\ht0
	\hbox to 0pt{\hss$#1\smile$\hss}\kern\wd0}
\begin{document}

	\title{On the regularity of almost stable relations}
	\date{\today}
	
	\author{Marcos Gir\'on}
	\address{Departamento de \'Algebra, Geometr\'ia y Topolog\'ia, Facultad de Matem\'aticas,
		Universidad Complutense de Madrid, 28040 Madrid, Spain}
	\email{marcosgi@ucm.es}
	
	\keywords{Model Theory, Local Stability, Definable Groups, Graph Regularity, Arithmetic Regularity}
	\subjclass[2020]{Primary 03C45; secondary 11B30}
	
	\begin{abstract}
		We develop a general theory of local stability up to belonging to an ideal (e.g. having measure zero). From a model-theoretic perspective, we prove a stationarity principle for almost stable formulas in this sense, and build a topological space of partial types whose Cantor-Bendixson rank is finite. The interaction of this space with Keisler measures and definable groups yields, on the one hand, a regularity lemma for infinite graphs where the edge relation is almost stable, and, on the other hand, the existence of definable stabilizer subgroups. As an application, we prove a finite graph regularity lemma and an arithmetic regularity lemma for almost stable relations in arbitrary finite groups.
	\end{abstract}
	
	\maketitle 
	
	\section{Introduction}
	
	Szemer\'edi regularity lemma \cite{SZ74, SZ78} is a fundamental structural theorem that has had many applications in graph theory and combinatorics, and motivated the celebrated arithmetic regularity results of Green on abelian groups \cite{BG05}. Recently, model theory has been used to obtain strengthened regularity lemmas for graphs and for subsets of groups omitting half-graphs of some fixed size.
	
	More precisely, a subset $A$ of a group $G$ is \emph{$k$-stable} if it induces no half-graph of height $k$: there do not exist $g_1,\ldots,g_k,h_1,\ldots,h_k\in G$ such that $g_i^{-1}\cdot h_j\in A$ if and only if $i\leq j$. In this context, is of particular importance the work of Terry and Wolf on groups of the form $\mathbb{F}_p^n$ \cite{TW19} and on abelian groups \cite{TW20}, as well as the results of Conant, Pillay and Terry concerning arbitrary finite groups \cite{CPT18} (see also \cite{C21,ADJ21}). Essentially, all these works establish that a stable subset of a group is approximately a union of cosets of a normal subgroup of bounded index. Moreover, this line of research has already been extended beyond the stable context, and results of this nature have been successfully extended for subsets of groups with finite VC-dimension \cite{AFZ19, CPT22, CP24, CT25}.
	
	Another remarkable advance of direct relevance to our purposes was achieved by Terry and Wolf in \cite{TW23}. Among other results, the authors prove an arithmetic regularity lemma for \emph{almost $k$-stable subsets} of finite abelian groups, which can be intuitively understood as sets inducing very few half-graphs of height $k$. Using techniques from generalized stability theory, we extend this theorem to arbitrary finite groups. To make precise our statement, we introduce the following notation: given a binary relation $E$ in a set $M$, let
	\begin{align*}
		\mcl{H}_k(E)=\{(a_1,b_1,\ldots,a_k,b_k)\in M^{2k}:(a_i,b_j)\in E\Leftrightarrow i\leq j\}
	\end{align*}
	denote the set of half-graphs of height $k$ induced by $E$ in $M$. Then, if we bound the cardinality of $\mcl{H}_k(E)$ in the particular case where $E$ is the Cayley relation induced by a subset $A$ of a group $G$, namely
	\begin{align*}
		\textrm{Cay}(G,A)=\{(g,h)\in G\times G:g^{-1}\cdot h\in A\},
	\end{align*}
	we obtain the following theorem:
	
	\begin{teoA}[Theorem \ref{fgroups}]
		For every natural number $k$ and every real number $\eps>0$, there exists a natural number $n=n(k,\eps)$ such that: for every finite group $G$ and every subset $A$ of $G$, if $\abs{\mcl{H}_k(\emph{Cay}(G,A))}<\abs{G}^{2k}/n$, then there exists a normal subgroup $H$ of $G$ with index at most $n$, and a union of cosets $B$ of $H$ such that $\abs{A\triangle B}<\eps\cdot\abs{H}$.
	\end{teoA}
	
	Theorem A is the corresponding version of \cite[Theorem 1.3]{CPT18} for almost stable sets, where the latter extends the regularity lemma of Terry and Wolf on groups of the form $\mathbb{F}_p^n$ to arbitrary finite groups. For generalizing \cite[Theorem 5.25]{TW23} to arbitrary finite groups in Theorem A, we need to develop the model theory of almost stable relations. Our work will also allow us to analyse regularity phenomena in the context of finite graphs, which was originally the first result to be studied model theoretically.
	
	Recall that a bipartite graph $(U,V,E)$ is \emph{$k$-stable} if the edge relation $E$ induces no half-graph of height $k$: there are no vertices $a_1,\ldots,a_k\in U$ and $b_1,\ldots,b_k\in V$ such that $(a_i,b_j)\in E$ if and only if $i\leq j$. In this case, in their seminal work \cite{SM14}, Malliaris and Shelah proved that we can obtain partitions $U_1\cup\ldots\cup U_r$ of $U$ and $V_1\cup\ldots\cup V_s$ of $V$ such that $E\vert_{U_i\times V_j}$ is $\eps$-\emph{homogeneous} for \emph{all} pairs $(i,j)$, that is, for every $(i,j)$ either
	\begin{align*}
		\abs{E\cap(U_i\times V_j)}\geq(1-\eps)^2\cdot\abs{U_i\times V_j}\textrm{ or }\abs{\neg E\cap(U_i\times V_j)}\geq(1-\eps)^2\cdot\abs{U_i\times V_j}.
	\end{align*}
	A pseudofinite proof of this theorem using local stability theory, as developed by Hrushovski and Pillay \cite{HP94}, was given later by Malliaris and Pillay in \cite{PM15} (afterwards, this proof was simplified by Pillay in \cite{AP20}). We also refer to \cite[Theorem 2.1.2]{CT23}, where Conant and Terry give a sharper qualitative proof for an error function rather than a constant $\eps>0$ fixed beforehand, at the cost of allowing some bounded number of exceptional pairs. Furthermore, similar results have been obtained in other tame contexts aligning with classical classification lines in model theory, such as graphs of bounded VC-dimension or graphs definable in distal structures, see \cite{AFN07,CS18, CS21,LS10}.
	
	A notable improvement in this framework has also recently been achieved by Terry and Wolf in \cite[Lemma 6.14]{TW21}. Their result constitutes a regularity lemma for \emph{almost stable graphs}, or graphs whose edge relation has very few witnesses to instability. Although no explicit bound appears in the statement, their proof yields polynomial bounds on the sizes of the resulting partitions. Thus, from \cite[Lemma 6.14]{TW21} one can deduce the following lemma for bipartite graphs:
	\begin{teoB}[Theorem \ref{fgraphs}]
		For every natural number $k$ and all real numbers $\eps,\delta>0$, there are natural numbers $n=n(k,\eps,\delta)$, $r=r(k,\eps,\delta)$ and $s=s(k,\eps,\delta)$ such that: if $G=(U,V,E)$ is any finite bipartite graph with $\abs{U},\abs{V}\geq n$ and $\abs{\mcl{H}_k(E)}<\abs{U}^{k}\abs{V}^k/n$, then we can partition $U$ into sets $U_0,U_1,\ldots,U_r$ and $V$ into sets $V_0,V_1,\ldots,V_s$ with $\abs{U_i}\geq\abs{U}/n$ and $\abs{V_i}\geq\abs{V}/n$ for every $i\geq 1$, such that
		\begin{align*}
			\abs{U_0}<\delta\cdot\abs{U} \textrm{ and } \abs{V_0}<\delta\cdot\abs{V}
		\end{align*}
		and for every $i,j\geq 1$, the relation $E\vert_{U_i\times V_j}$ is $\eps$-homogeneous.
	\end{teoB}
	Following Malliaris and Pillay \cite{PM15}, we use model-theoretic techniques to refine this theorem by showing that the sets $U_1,\ldots,U_r$ can be chosen to be Boolean combinations of neighbourhoods of the form $E(x,b)$ and $\neg E(x,b)$, and likewise for $V_1,\ldots,V_s$. In addition, the complexity of these Boolean combinations is uniform on $k$, $\eps$ and $\delta$ (see Theorem \ref{regularmeasure} and Theorem \ref{fgraphs}). Note that our result resembles \cite[Theorem 5.6]{AFP17}, an important precedent in the literature which provided a similar regularity lemma in finite graphs. On the other hand, as a corollary of Theorem \ref{regularmeasure}, we answer a question of \cite[Page 14]{AMP242} by giving a model-theoretic account of \cite[Theorem 6.13]{TW21} (see also \cite{EFR86}).
	
	As noted above, in this article we develop a framework in which almost stable relations as described earlier can be systematically studied from a model-theoretic perspective, with the additional aim of contributing to the transition from stability to almost stability which is already in progress in the aforementioned literature.

	
	Indeed, over the past few years, several works have relaxed the notion of stability to \emph{almost stability} allowing model-theoretic techniques from local stability theory to be applied beyond the stable context \cite{AFP17,CT24,CM22,AMP242,TW23}. The central notion in all these works is what we call here \emph{almost sure stable relations}: in the presence of a measure $\mu$, a relation $E$ is almost sure stable if, for some natural number $k$, we have that $\mu(\mcl{H}_k(E))=0$. This weakening of stability is robust from a combinatorial perspective as it is insensitive to perturbations by sets of measure zero. We generalize almost sure stable relations to $\ideal$\emph{-stable formulas} (see Definition \ref{Istabformula}), where $\ideal$ is an $S_1$-ideal of definable sets, as introduced by Hrushovski \cite{EH12} (see Definition \ref{S1}). When $\ideal$ is the family of sets of measure zero with respect to a Keisler measure (that is, a finitely additive probability measure on the Boolean algebra of definable sets), we recover the setting of almost sure stable relations (see Fact \ref{widetuple}).
	
	\medskip
	We now proceed to outline the structure of the article, emphasizing the model-theoretic component, which constitutes the major novelty of our work. We aim to develop a general theory of local stability up to belonging to an $S_1$-ideal $\ideal$. To this end, we define in Section \ref{prel} the notion of independence with respect to a family of ideals (Definition \ref{wide&indep}) and we relate it with the usual non-forking independence. This allows us to prove in Section \ref{Istab}, following the lines of \cite[Theorem 4.11]{AMP242}, a stationarity principle for $\ideal$-stable formulas (Theorem \ref{stat}), as well as a definability result for wide types (Theorem \ref{def2}). Recall that a type is wide if it avoids all the formulas from the ideal $\ideal$. Our main model-theoretic result is:
	
	\begin{teoC} [Theorem \ref{finiteCB}]
		Let $\ideal$ be an $\emptyset$-invariant $S_1$-ideal and let $\varphi(x,y)$ be an $\ideal$-stable formula. Given a model $M$, let $\gl{X}_{\varphi,M}$ be the set whose elements are maximal consistent sets of Boolean combinations of formulas of the form $\varphi(x,b)$ and $\neg\varphi(x,b)$, with  $\tpp(b/M)$ wide. Then $\gl{X}_{\varphi,M}$ has a natural structure of compact, Hausdorff and $0$-dimensional topological space with finite Cantor-Bendixson rank.
	\end{teoC}
	
	This theorem, as well as the model-theoretic machinery developed in Section \ref{Istab}, plays a crucial role in sections \ref{regularity} and \ref{groups}, where we prove two topological regularity lemmas in the $\ideal$-stable context, which we will now proceed to specify. These are applied in Section \ref{applications} to obtain Theorem A and Theorem B.
	
	In Section \ref{regularity} we work with Keisler measures. In this context, we can obtain \emph{regular partitions} of $\gl{X}_{\varphi,M}$ leading to a regularity lemma for infinite graphs where the edge relation is $\ideal$-stable (Theorem \ref{regularmeasure}), generalizing the main result of Malliaris and Pillay \cite{PM15}. In Section \ref{groups} we study groups and stabilizers. The presence of an ambient definable group $G$ acting on $\gl{X}_{\varphi,M}$ implies the existence of definable stabilizer subgroups of $G$. As a consequence, we obtain a regularity lemma for infinite groups: for every $\ideal$-stable subset $A$ of $G$ (that is, the set $A$ is definable and the formula $y\cdot x\in A$ is $\ideal$-stable) there exists a finite union $B$ of cosets of a normal finite-index subgroup of $G$ such that $A\triangle B\in\ideal$ (Theorem \ref{almoststable}).
	
	Finally, in Section \ref{applications} we apply these results to the case of finite graphs and arbitrary finite groups to prove Theorem A and Theorem B via an standard ultraproduct argument. 
	
	\medskip
	
	\noindent\textbf{Acknowledgements.} The author is supported by Spanish STRANO PID2021-122752NB-I00 and a fellowship (grant CT15/23) funded by Universidad Complutense de Madrid and Banco Santander. He thanks Julia Wolf for her useful insights in relation to the appendix of this article, as well as the anonymous referee for their suggestions and comments regarding important theorems preceding our work. The results of this paper are part of the author’s Ph.D. dissertation, which is supervised by El\'ias Baro and Daniel Palac\'in.

	\section{Preliminaries}\label{prel}
	
	Our notation is standard and we assume that the readers are familiar with the basics of local stability theory. We refer to the first chapter of \cite{AP96} or \cite{DP18}.
	
	Throughout the article, we fix a complete first-order theory $T$, possibly multi-sorted, in a language $L$. We assume that $T$ has models all whose sorts are infinite. As usual, we will be working inside a monster model $\MB$ of the theory and we denote by $x,y,z,\ldots$ finite tuples of variables.
	
	Recall that a family $\ideal$ of $L_x(\MB)$-formulas is an \textit{ideal} if it is closed under subsets and finite unions, and additionally $\emptyset\in\ideal$. We write $\ideal_x$ to stress that the formulas are in the fixed finite tuple of variables $x$. 
	
	Let $\ideal_x$ be an ideal. Following Hrushovski \cite{EH12}, we say that a partial type $\pi(x)$ is $\ideal_x$\emph{-wide} if it implies no formula from $\ideal_x$. A tuple $a$ is $\ideal_x$\emph{-wide over} $A$ if the type $p(x)=\tpp(a/A)$ is $\ideal_x$-wide.
	
	\begin{definition}\label{wide&indep}
		Let $(x_1,\ldots,x_n)$ be a finite tuple and let $\ideal_{x_i}$ be an ideal in each variable $x_i$. A finite sequence $(a_1,\ldots,a_n)$ is said to be $A$\textit{-independent} or \textit{independent over} $A$ \emph{with respect to the ideals} $\ideal_{x_i}$ if $\tpp\left(a_i/A,a_{<i}\right)$ is $I_{x_i}$-wide for every $i=1,\ldots,n$, where $a_{<i}=\{a_j:j<i\}$. We say that a finite sequence is \textit{non-forking} $A$\textit{-independent} if it is $A$-independent with respect to the ideal of formulas that fork over $A$.
	\end{definition}
	
	When the ideals $\ideal_{x_i}$ are understood from the context, we will simply say that a tuple is $A$-independent or independent over $A$.
	
	A basic property that we will use repeatedly is that wide types can be extended to wide types over larger sets of parameters. More precisely, if $\pi(x)$ is a partial type over $A$ that is wide with respect to an ideal $\ideal_x$, then for every set $B\supset A$ there exists a complete $\ideal_x$-wide type $p(x)$ over $B$ extending $\pi(x)$. The reason is that, by compactness, the set of formulas
	\begin{align*}
		\pi(x)\cup\{\neg\varphi(x,b):b\in B\textrm{ and }\varphi(x,b)\in \ideal_x\}
	\end{align*}
	is consistent, and hence it can be extended to a complete type over $B$. This type is necessarily $\ideal_x$-wide. In particular, a type-definable set $X$ is $\ideal_x$-wide (that is, it is defined by an $\ideal_x$-wide partial type) if and only if it contains an element that is $\ideal_x$-wide over the parameters of definition of $X$. In that case, for every set $B$ containing such parameters, the type-definable set $X$ contains an element that is $\ideal_x$-wide over $B$.
	
	As usual, an ideal $\ideal_x$ is $A$-invariant if it is invariant under the set of automorphisms $\Aut_A(\MB)$ fixing pointwise the set $A$. Equivalently, if $a\equiv_Aa^\prime$ and $\varphi(x,a)\in\ideal_x$, then $\varphi(x,a^\prime)\in\ideal_x$.

	\begin{definition}\label{S1}
		\cite[Definition 2.8]{EH12} An $A$-invariant ideal $\ideal_x$ is $S_1$ if the following holds: for every formula $\varphi(x,y)\in L$ and every $A$-indiscernible sequence $(a_i)_{i\in\NB}$ such that $\varphi(x,a_0)\land\varphi(x,a_1)\in\ideal_x$, we have that $\varphi(x,a_0)\in\ideal_x$.
	\end{definition}
	
	As noted in \cite[Lemma 2.9]{EH12}, if the ideal $\ideal_x$ is $A$-invariant and $S_1$, and $\pi(x)$ is an $\ideal_x$-wide partial type, then $\pi(x)$ does not fork over $A$. In particular, every $A$-independent tuple $(a_1,\ldots,a_n)$ is non-forking $A$-independent.
	
	\begin{example}\label{measures}
		Let $\mu$ be a \textit{global Keisler measure}, that is, a finitely additive probability measure on the Boolean algebra of definable sets over $\MB$, or equivalently, on $L_x(\MB)$. Then the collection of sets of measure zero is an ideal, say $\ideal_x$.
		
		We shall say that the measure $\mu$ is $A$\emph{-invariant} (see \cite[Definition 2.13]{KG20}) if for every formula $\varphi(x,y)\in L$ and all $b,b^\prime\in\MB^{y}$ such that $b\equiv_Ab^\prime$ we have that $\mu(\varphi(x,b))=\mu(\varphi(x,b^\prime))$. If $\mu$ is $A$-invariant then the ideal $\ideal_x$ is $A$-invariant and $S_1$ \cite[Example 2.12]{EH12}, and formulas of positive measure do not fork over $A$.
	\end{example}
	
	The property $S_1$ of ideals is closely related with the notion of \textit{equational relation}. An $A$\emph{-invariant} relation $R(x,y)$ is just a subset of $\MB^{x}\times\MB^{y}$ such that for every $(a,b)\equiv_A(a^\prime,b^\prime)$, if $R(a,b)$ holds then so does $R(a^\prime,b^\prime)$. We say that an $A$-invariant relation is \textit{equational} if there is no $A$-indiscernible sequence $(a_i,b_i)_{i\in\NB}$ such that $R(a_0,b_1)$ holds but $R(a_0,b_0)$ does not. 
	
	\begin{fact}\label{S1eq}\cite[Lemma 2.10]{EH12}
		Let $\ideal_x$ be an $A$-invariant $S_1$-ideal, and let $\Phi(x,y)$ and $\Psi(x,z)$ be partial types over $A$. The relation $R_{\Phi,\Psi}$ defined as
		\begin{align*}
			R_{\Phi,\Psi}(a,b)\Leftrightarrow \Phi(x,a)\cup\Psi(x,b)\textrm{ is not } I_x\textrm{-wide}
		\end{align*}
		is $A$-invariant and equational.
	\end{fact}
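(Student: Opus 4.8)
The plan is to first unpack the phrase ``not $\ideal_x$-wide''. By compactness, and since $\ideal_x$ is closed under implication (being closed under subsets), the partial type $\Phi(x,a)\cup\Psi(x,b)$ fails to be $\ideal_x$-wide if and only if there are a finite conjunction $\phi(x,y)$ of formulas from $\Phi(x,y)$ and a finite conjunction $\psi(x,z)$ of formulas from $\Psi(x,z)$ --- both $L(A)$-formulas --- with $\phi(x,a)\wedge\psi(x,b)\in\ideal_x$. Granting this reformulation, $A$-invariance of $R_{\Phi,\Psi}$ is immediate: if $(a,b)\equiv_A(a',b')$ and $R_{\Phi,\Psi}(a,b)$ holds, choose $\sigma\in\Aut_A(\MB)$ with $\sigma(a,b)=(a',b')$; since $\phi$ and $\psi$ have parameters only in $A$, the ideal being $A$-invariant gives $\phi(x,a')\wedge\psi(x,b')=\sigma\big(\phi(x,a)\wedge\psi(x,b)\big)\in\ideal_x$, so $R_{\Phi,\Psi}(a',b')$ holds.

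For equationality I would argue by contradiction. Assume $(a_i,b_i)_{i\in\NB}$ is an $A$-indiscernible sequence with $R_{\Phi,\Psi}(a_0,b_1)$ holding but $R_{\Phi,\Psi}(a_0,b_0)$ failing. From $R_{\Phi,\Psi}(a_0,b_1)$ the reformulation supplies finite conjunctions $\phi(x,y)$ of formulas from $\Phi$ and $\psi(x,z)$ of formulas from $\Psi$ with $\phi(x,a_0)\wedge\psi(x,b_1)\in\ideal_x$; set $\theta(x,y,z):=\phi(x,y)\wedge\psi(x,z)$, an $L(A)$-formula. The key computation is purely propositional: writing $c_i:=(a_i,b_i)$,
\begin{align*}
\theta(x,c_0)\wedge\theta(x,c_1)=\phi(x,a_0)\wedge\psi(x,b_0)\wedge\phi(x,a_1)\wedge\psi(x,b_1)\vdash\phi(x,a_0)\wedge\psi(x,b_1),
\end{align*}
and $\phi(x,a_0)\wedge\psi(x,b_1)\in\ideal_x$; hence $\theta(x,c_0)\wedge\theta(x,c_1)\in\ideal_x$ because $\ideal_x$ is closed under subsets.

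Now $(c_i)_{i\in\NB}$ is $A$-indiscernible (it is the given sequence, read as a sequence of concatenated tuples). After absorbing the finitely many $A$-parameters of $\theta$ into the variables --- replacing $\theta$ by some $\theta'(x,y,z,\bar a)\in L$ and $(c_i)$ by the still $A$-indiscernible sequence $(c_i,\bar a)_{i\in\NB}$ --- the $S_1$ property of $\ideal_x$ applies to $\theta'$ and this sequence and yields $\theta(x,c_0)=\phi(x,a_0)\wedge\psi(x,b_0)\in\ideal_x$. But then $\Phi(x,a_0)\cup\Psi(x,b_0)$ implies a formula of $\ideal_x$, so it is not $\ideal_x$-wide, contradicting $\neg R_{\Phi,\Psi}(a_0,b_0)$. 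I expect the only real subtlety to be the choice of sequence fed to the $S_1$ property: sliding a parameter along the $b$-side merely re-derives $\phi(x,a_0)\wedge\psi(x,b_j)\in\ideal_x$ for $j\geq1$, which is already known. The productive move is to run $S_1$ on the \emph{diagonal} sequence $(a_i,b_i)_i$, exploiting that the conjunction of two of its $\theta$-instances dominates the off-diagonal instance $\theta(x,a_0,b_1)$ that $R_{\Phi,\Psi}(a_0,b_1)$ already tells us is small.
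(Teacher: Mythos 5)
Your proof is correct. The paper states this as a Fact imported from \cite[Lemma 2.10]{EH12} without reproving it, and your argument --- reducing ``not wide'' to a finite conjunction $\phi\wedge\psi\in\ideal_x$ by compactness and downward closure, then running the $S_1$ property on the diagonal sequence $(a_i,b_i)_i$ with $\theta=\phi\wedge\psi$ (parameters from $A$ absorbed into the indiscernible sequence) --- is exactly the standard argument from the cited source.
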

	
	\begin{fact}\label{stateq}\cite[Remark 2.1]{AMP243}
		Let $M$ be a model, and let $R(x,y)$ be an $M$-invariant equational relation. Suppose that $R(a,b)$ holds for some $a$ and $b$ such that $\tpp(a/M,b)$ or $\tpp(b/M,a)$ does not divide over $M$. Then, for any $a^\prime\equiv_Ma$ and $b^\prime\equiv_Mb$, we have that $R(a^\prime,b^\prime)$ holds.
	\end{fact}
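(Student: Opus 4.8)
The plan is to reduce, via automorphisms and a symmetry, to a one‑sided ``stationarity'' statement, and then to derive a contradiction with the definition of equationality.

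First I would note that the opposite relation $R^{op}(y,x):=R(x,y)$ is again $M$-invariant and equational. Unwinding the definition: $R$ is equational iff there is no $M$-indiscernible sequence $(a_i,b_i)_{i\in\NB}$ with $R(a_i,b_j)$ for all $i<j$ and $\neg R(a_i,b_i)$ for all $i$ (these two patterns follow from $R(a_0,b_1)$ and $\neg R(a_0,b_0)$ by indiscernibility). Since an $M$-indiscernible sequence of pairs indexed by $\mathbb{Q}$ remains $M$-indiscernible after reversing the order, and reversal interchanges the conditions ``$i<j$'' and ``$i>j$'', the analogous statement for $R^{op}$ is equivalent to that for $R$. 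With this in hand, applying an automorphism in $\Aut_M(\MB)$ carrying $a$ to a prescribed $a'\equiv_M a$ (and, in the case where it is $\tpp(a/Mb)$ that does not divide, first replacing $R$ by $R^{op}$ and swapping the roles of $a$ and $b$), the whole statement reduces to: \emph{if $R(a,b)$ holds and $\tpp(b/Ma)$ does not divide over $M$, then $R(a,b')$ holds for every $b'\equiv_M b$.}

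To prove this reduced statement I would argue by contradiction. Assuming $R(a,b)$, $\tpp(b/Ma)$ non-dividing over $M$, and $\neg R(a,b')$ with $b'\equiv_M b$, an $M$-automorphism sending $b'$ to $b$ produces $a_1\equiv_M a$ with $\neg R(a_1,b)$, so the set $\{x:R(x,b)\}$ separates the $M$-conjugates $a$ and $a_1$. I would then manufacture an $M$-indiscernible sequence of pairs realizing the pattern forbidden by equationality of $R$ (or of $R^{op}$). The elementary move is: given an $M$-indiscernible sequence $\langle a^i:i<\omega\rangle$ with $a^0=a$, the non-dividing hypothesis (via the usual characterization of dividing by indiscernible sequences) yields a common realization $c$ of $\bigcup_i\tpp(b/Ma)(a^i)$; then $(a^i,c)\equiv_M(a,b)$, hence $R(a^i,c)$, for all $i$, and from $i=0$ one gets $c\equiv_{Ma}b$, so transporting $a_1$ by an automorphism fixing $Ma$ and carrying $b$ to $c$ gives $\tilde a\equiv_M a$ with $\neg R(\tilde a,c)$. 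Iterating this move along suitably nested indiscernible sequences and amalgamating the outputs should build, for every $n$, a finite array which after a Ramsey extraction yields an $M$-indiscernible sequence $\langle(\alpha_i,\beta_i):i<\omega\rangle$ with $R(\alpha_i,\beta_j)$ for $i<j$ and $\neg R(\alpha_i,\beta_i)$ for all $i$ (or the mirror pattern contradicting $R^{op}$).

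I expect the amalgamation in the previous paragraph to be the crux: assembling the ``generic'' realizations coming from non-dividing together with the transported negative witnesses into a single $M$-indiscernible sequence with the right staircase pattern, while keeping track of all the automorphisms coherently. What makes it go through is that at every stage the type one is asked to realize is an $\Aut_M(\MB)$-translate of $\tpp(b/Ma)$, hence still does not divide over $M$, so that every partial array extends; a compactness argument then supplies arbitrarily long such arrays and a Ramsey extraction produces the $M$-indiscernible one. The two reductions at the start, and the final appeal to the definition of equationality, are routine by comparison.
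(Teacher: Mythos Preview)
The paper does not prove this statement: it is quoted as a fact with a citation to \cite[Remark~2.1]{AMP243}, so there is no in-paper argument to compare your attempt against. I can only assess the proposal on its own terms.

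Your reductions are sound. The symmetry step---that $R^{op}$ is again $M$-invariant and equational, via reversal of a $\mathbb{Q}$-indexed $M$-indiscernible sequence---is correct, and the automorphism reduction to the one-sided statement ``$R(a,b)$ and $\tpp(b/Ma)$ non-dividing imply $R(a,b')$ for every $b'\equiv_M b$'' is routine. Your ``elementary move'' is also correctly described: given an $M$-indiscernible $(a^i)$ with $a^0=a$, non-dividing lets you realize $\bigcup_i\tpp(b/Ma)(a^i)$ by some $c$, and an $Ma$-automorphism sending $b$ to $c$ transports the negative witness.

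The gap is precisely where you place it, and your sketch does not close it. Each elementary move produces a $c$ with $R(a^i,c)$ along \emph{one particular} $M$-indiscernible sequence $(a^i)$, together with a single $\tilde a$ with $\neg R(\tilde a,c)$; but to iterate you need the next indiscernible sequence (through $\tilde a$) to interact correctly with the earlier $c$'s so that the staircase $R(\alpha_i,\beta_j)$ for $i<j$, $\neg R(\alpha_i,\beta_i)$ accumulates. Your remark that the relevant type is always an $\Aut_M$-translate of $\tpp(b/Ma)$ guarantees the \emph{local} non-dividing step can be redone, but says nothing about this global coherence: the new indiscernible sequence through $\tilde a$ need not contain the old $a^i$'s, so the previously secured instances $R(a^i,c)$ are not inherited. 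As written, the iteration produces a scattered collection of pairs rather than an array with the required pattern, and the appeal to ``Ramsey extraction'' at the end presupposes exactly the array you have not built. One cleaner line is to first observe (using equationality of both $R$ and $R^{op}$) that on any $M$-indiscernible $(a_i,b_i)$ with $\neg R(a_0,b_0)$ one in fact has $\neg R(a_i,b_j)$ for \emph{all} $i,j$; then take such a sequence in $\tpp(a',b'/M)$ with $a_0=a$, and use non-dividing together with the standard fact that one may realize $\tpp(b/Ma)$ so that the whole sequence becomes indiscernible over the realization. This packages all the positive instances into a single parameter $b^*$, after which a short argument extracts the contradiction. I would recommend consulting the cited source for the exact finishing step, since that---not the reductions---is the content of the result.
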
	
	
	We now prove the main result of this first section (cf. \cite[Fact 4.5]{AMP242}):
	
	\begin{lemma}\label{wideinter}
		Let $M$ be a model, let $I_x$ an $M$-invariant $S_1$-ideal, and let $\Phi(x,y)$ a partial type over $M$. If $\Phi(x,a)$ is $I_x$-wide, then $\bigcup_{i=1}^n\Phi(x,a_i)$ is also $I_x$-wide, for every $n\in\NB$ and every non-forking $M$-independent tuple $(a_1,\ldots,a_n)$ of realizations of  $\tpp(a/M)$.
	\end{lemma}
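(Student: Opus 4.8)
The plan is to induct on $n$. The base case $n=1$ is immediate: since $a_1$ realizes $\tpp(a/M)$ and $I_x$ is $M$-invariant, $\Phi(x,a_1)$ is $I_x$-wide because $\Phi(x,a)$ is. For the inductive step, fix $n\ge 2$ and a non-forking $M$-independent tuple $(a_1,\ldots,a_n)$ of realizations of $\tpp(a/M)$. I would introduce the partial type over $M$
\[
	\Psi(x,z_1,\ldots,z_{n-1}):=\bigcup_{i=1}^{n-1}\Phi(x,z_i).
\]
The truncated tuple $(a_1,\ldots,a_{n-1})$ is still non-forking $M$-independent (its defining conditions form a subset of those for $(a_1,\ldots,a_n)$) and consists of realizations of $\tpp(a/M)$, so by the induction hypothesis $\Psi(x,a_1,\ldots,a_{n-1})=\bigcup_{i=1}^{n-1}\Phi(x,a_i)$ is $I_x$-wide. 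It therefore suffices to prove that $\Phi(x,a_n)\cup\Psi(x,a_1,\ldots,a_{n-1})$ is $I_x$-wide, equivalently that the relation $R_{\Phi,\Psi}$ of Fact \ref{S1eq} does not hold at $\bigl(a_n,(a_1,\ldots,a_{n-1})\bigr)$. This is the same strategy as in \cite[Fact 4.5]{AMP242}.

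To get this, I would argue by contradiction and assume $R_{\Phi,\Psi}\bigl(a_n,(a_1,\ldots,a_{n-1})\bigr)$ holds. Non-forking $M$-independence gives that $\tpp(a_n/M,a_1,\ldots,a_{n-1})$ does not fork over $M$, hence does not divide over $M$. By Fact \ref{S1eq} the relation $R_{\Phi,\Psi}$ is $M$-invariant and equational, so Fact \ref{stateq} applies, with the non-dividing clause holding on the first coordinate, and yields $R_{\Phi,\Psi}\bigl(a',(a_1,\ldots,a_{n-1})\bigr)$ for every $a'\equiv_M a_n$. Choosing $a'=a_1$ — legitimate since $a_1$ and $a_n$ both realize $\tpp(a/M)$ — I conclude that $\Phi(x,a_1)\cup\Psi(x,a_1,\ldots,a_{n-1})$ is not $I_x$-wide; but this set is just $\bigcup_{i=1}^{n-1}\Phi(x,a_i)$, which the induction hypothesis has already shown to be $I_x$-wide. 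This contradiction shows $R_{\Phi,\Psi}\bigl(a_n,(a_1,\ldots,a_{n-1})\bigr)$ fails, i.e. $\bigcup_{i=1}^{n}\Phi(x,a_i)=\Phi(x,a_n)\cup\Psi(x,a_1,\ldots,a_{n-1})$ is $I_x$-wide, closing the induction.

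The step I expect to need the most care is the bookkeeping around the application of Fact \ref{stateq}: one has to match $R_{\Phi,\Psi}$ to the shape of relation allowed in that fact (a binary $M$-invariant equational relation, here between a single realization of the $y$-sort and a realization of the tuple-sort $z=(z_1,\ldots,z_{n-1})$), and then check that the independence hypothesis controls the correct side of the "does not divide" alternative — which it does, since $\tpp(a_n/M,a_1,\ldots,a_{n-1})$ is exactly the $\tpp(a/M,b)$ type of Fact \ref{stateq} with $b=(a_1,\ldots,a_{n-1})$. Once the sorts and sides are lined up, the rest is routine tracking of which union of copies of $\Phi$ is under consideration.
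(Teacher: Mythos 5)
Your proof is correct, but it takes a genuinely different route from the paper's inductive step. The paper first proves an auxiliary claim extending $(a_1,\ldots,a_n)$ to a non-forking $M$-independent tuple $(a_1,\ldots,a_{2n-2})$ with $(a_1,\ldots,a_{n-1})\equiv_M(a_n,\ldots,a_{2n-2})$, and then applies the symmetric case $n=2$ (that is, the relation $R_{\Psi,\Psi}$ for the block type $\Psi(x,z_1,\ldots,z_{n-1})=\bigcup_{i<n}\Phi(x,z_i)$) to the two $M$-conjugate independent blocks; the conjugacy of the blocks is exactly what that symmetric argument needs, which is why the extension claim is there. You instead work with the asymmetric relation $R_{\Phi,\Psi}$, apply Fact \ref{stateq} with the non-dividing hypothesis supplied by $\tpp(a_n/M,a_1,\ldots,a_{n-1})$, and conjugate only the singleton coordinate $a_n\mapsto a_1$ while leaving the tuple coordinate fixed, so that $\Phi(x,a_1)$ is absorbed into $\Psi(x,a_1,\ldots,a_{n-1})$ and the contradiction is directly with the induction hypothesis. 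This is legitimate --- Fact \ref{S1eq} makes $R_{\Phi,\Psi}$ $M$-invariant and equational for partial types $\Phi$ and $\Psi$ in tuples of different sorts, and Fact \ref{stateq} places no independence requirement on the conjugates $a^\prime,b^\prime$ --- and it buys you a shorter argument that dispenses with the doubling claim and with left-transitivity of non-forking. The paper's version, on the other hand, isolates the two-block statement in a form that is reused conceptually elsewhere (it is the direct $n$-ary analogue of \cite[Corollary 2.3]{AMP243}). Both are complete proofs; yours is the more economical of the two.
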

	
	\begin{proof}
		We will prove it by induction on $n\geq 2$. For $n=2$, this is the content of \cite[Corollary 2.3]{AMP243}. Suppose that both $b$ and $c$ realize $\tpp(a/M)$ and that the type $\tpp(c/M,b)$ does not fork over $M$. If $\Phi(x,b)\cup\Phi(x,c)$ is not $I_x$-wide, then $R_{\Phi,\Phi}(b,c)$ holds. This implies, by Facts \ref{S1eq} and \ref{stateq}, that $R_{\Phi,\Phi}(a,a)$ also holds, which is a contradiction, as $\Phi(x,a)$ is wide by hypothesis.
		
		So, by induction, consider a non-forking $M$-independent tuple $(a_1,\ldots,a_n)$ with $a_i\equiv_Ma$ for every $i=1,\ldots,n$. First, we claim that:
		
		\begin{claim*}
			The tuple $(a_1,\ldots,a_n)$ can be extended to a non-forking $M$-independent tuple $(a_1,\ldots,a_n,a_{n+1},\ldots,a_{2n-2})$ such that $(a_1,\ldots,a_{n-1})\equiv_M(a_n,\ldots,a_{2n-2})$.
		\end{claim*}
		
		\begin{claimproof*}
			For every $j=0,\ldots,n-2$, we will recursively define $a_{n+j}$ so that $(a_1,\ldots,a_{n+j})$ is non-forking $M$-independent and $(a_1,\ldots,a_{j+1})\equiv_M(a_n,\ldots,a_{n+j})$. The case $j=n-2$ is precisely the claim.
			
			Suppose that $a_{n+j}$ has already been constructed. By hypothesis, there exists an automorphism $f\in\Aut_M(\MB)$ such that $f(a_1,\ldots,a_{j+1})=(a_n,\ldots,a_{n+j})$. The type $\tpp(a_{j+2}/M,a_1,\ldots,a_{j+1})$ does not fork over $M$, and neither does the type $\tpp(f(a_{j+2})/M,a_n,\ldots,a_{n+j})$. Hence, this latter type can be extended to a complete type $p(x)$ over $M,a_1,\ldots,a_n,\ldots,a_{n+j}$ which does not fork over $M$. We can define $a_{n+j+1}$ as a realization of $p(x)$.
		\end{claimproof*}
		
		By left-transitivity of non-forking, the type $\tpp(a_n,\ldots,a_{2n-2}/M,a_1,\ldots,a_{n-1})$ does not fork over $M$. Therefore, since by induction hypothesis both partial types $\Phi(x,a_1)\cup\ldots\cup\Phi(x,a_{n-1})$ and $\Phi(x,a_n)\cup\ldots\cup\Phi(x,a_{2n-2})$ are $\ideal_x$-wide, we can apply the case $n=2$ to conclude that
		\begin{align*}
			\left(\Phi(x,a_1)\cup\ldots\cup\Phi(x,a_{n-1})\right)\cup\left(\Phi(x,a_n)\cup\ldots\cup\Phi(x,a_{2n-2})\right)
		\end{align*}
		is $\ideal_x$-wide. In particular $\Phi(x,a_1)\cup\ldots\cup\Phi(x,a_n)$ is $\ideal_x$-wide, as we wanted to prove.
	\end{proof}

	\section{Almost stable formulas}\label{Istab}
	
	Fix an ideal $\ideal_x$ in $L_x(\MB)$ and an ideal $\ideal_y$ in $L_y(\MB)$, and denote by $\ideal$ the ordered pair $(\ideal_x,\ideal_y)$. We assume that both $\ideal_x$ and $\ideal_y$ are $\emptyset$-invariant and $S_1$.
	
	\begin{definition}\label{Istabformula}
		A formula $\varphi(x,y)\in L$ is $\ideal$\emph{-stable of ladder} $k$ if there is no model $M$ and no tuple $(a_1,b_1,\ldots,a_k,b_k)$ such that either $(a_1,b_1,\ldots,a_k,b_k)$ or $(b_k,a_k,\ldots,b_1,a_1)$ is $M$-independent with respect to $\ideal_x$ and $\ideal_y$, and $\varphi(a_i,b_j)$ holds if and only if $i\leq j$. We say that $\varphi(x,y)$ is $I$\emph{-stable} if it is $I$-stable of ladder $k$ for some $k\in\NB$.
	\end{definition}
	
	In Corollary \ref{permutation} we will see that the above definition can be reformulated as: $\varphi(x,y)$ is $\ideal$-stable of ladder $k$ if there is no model $M$ and no tuple $(a_1,b_1,\ldots,a_k,b_k)$ such that some permutation in $S_{2k}$ of $(a_1,b_1,\ldots,a_k,b_k)$ is $M$-independent, and $\varphi(a_i,b_j)$ holds if and only if $i\leq j$.
	
	\begin{lemma}\label{comb}
		Let $\varphi(x,y)$ be an $\ideal$-stable formula. Then so are the opposite formula $\varphi^*(y,x)$ and the negation $\neg\varphi(x,y)$.
	\end{lemma}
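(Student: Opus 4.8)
The plan is to prove both assertions by elementary manipulations of a putative witnessing ladder, using only that Definition \ref{Istabformula} already quantifies over a tuple \emph{and} its reversal. The one structural fact I would isolate first is a \emph{truncation observation}: if $(c_1,\dots,c_n)$ is $M$-independent with respect to ideals $\ideal_{x_1},\dots,\ideal_{x_n}$, then so is any contiguous sub-tuple $(c_{j+1},\dots,c_{n-\ell})$. Deleting a terminal segment is immediate from Definition \ref{wide&indep}, and deleting the first coordinate preserves independence because $\tpp(c_i/M c_{<i})$ being $\ideal_{x_i}$-wide forces the restriction $\tpp(c_i/M c_2\cdots c_{i-1})$ to be $\ideal_{x_i}$-wide as well — shrinking the parameter set of a complete type can only remove implied formulas, never add one — and then one iterates. (The same remark shows that an $\ideal$-stable formula of ladder $k$ is $\ideal$-stable of ladder $k'$ for all $k'\geq k$, so it is harmless below to fix one $k$ with $\varphi$ $\ideal$-stable of ladder $k$.)

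For the opposite formula I would argue by contradiction: if $\varphi^*$ fails to be $\ideal$-stable of ladder $k$, pick a model $M$ and a tuple $(c_1,d_1,\dots,c_k,d_k)$, with the $c_i$ in the sort of $y$ and the $d_j$ in the sort of $x$, such that $\varphi^*(c_i,d_j)$ (that is, $\varphi(d_j,c_i)$) holds iff $i\leq j$ and one of $(c_1,d_1,\dots,c_k,d_k)$, $(d_k,c_k,\dots,d_1,c_1)$ is $M$-independent. Setting $a_p:=d_{k+1-p}$ and $b_q:=c_{k+1-q}$, a one-line index computation gives $\varphi(a_p,b_q)\Leftrightarrow p\leq q$, while $(c_1,d_1,\dots,c_k,d_k)=(b_k,a_k,\dots,b_1,a_1)$ and $(d_k,c_k,\dots,d_1,c_1)=(a_1,b_1,\dots,a_k,b_k)$. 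Thus the disjunction supplied by the hypothesis is exactly the one forbidden by $\ideal$-stability of $\varphi$ of ladder $k$, a contradiction; note that no truncation is needed here and the ladder length is unchanged.

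For the negation I would show $\neg\varphi$ is $\ideal$-stable of ladder $k+1$. Suppose not, and take $M$ and $(a_1,b_1,\dots,a_{k+1},b_{k+1})$ with $\neg\varphi(a_i,b_j)\Leftrightarrow i\leq j$ — equivalently $\varphi(a_i,b_j)\Leftrightarrow i>j$ — and one of the two orderings $M$-independent. For $m,n\in\{1,\dots,k\}$ put $\alpha_m:=a_{k+2-m}$ and $\beta_n:=b_{k+1-n}$; then $\varphi(\alpha_m,\beta_n)$ holds iff $k+2-m>k+1-n$, i.e.\ iff $m\leq n$. The remaining point is to match orderings: a direct check shows that $(\alpha_1,\beta_1,\dots,\alpha_k,\beta_k)$ is obtained by deleting the first and last entries of $(b_{k+1},a_{k+1},\dots,b_1,a_1)$, and that $(\beta_k,\alpha_k,\dots,\beta_1,\alpha_1)$ is obtained by deleting the first and last entries of $(a_1,b_1,\dots,a_{k+1},b_{k+1})$. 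Hence, by the truncation observation, whichever ordering of the $\neg\varphi$-witness is $M$-independent, the corresponding ordering of $(\alpha_1,\beta_1,\dots,\alpha_k,\beta_k)$ is $M$-independent; this, with $\varphi(\alpha_m,\beta_n)\Leftrightarrow m\leq n$, contradicts $\ideal$-stability of $\varphi$ of ladder $k$, and we conclude $\neg\varphi$ is $\ideal$-stable.

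The genuine content is concentrated in the truncation observation, which is precisely the place where the one-sided, non-symmetric nature of $\ideal$-independence could have caused trouble; it does not, because passing to a smaller parameter set preserves wideness. Everything else is index bookkeeping, and I expect the most error-prone part to be the negation case — keeping the pattern $\varphi(\alpha_m,\beta_n)\Leftrightarrow m\leq n$ correct and verifying that both the forward and the reversed reindexed tuples really are the stated truncations of the original witness.
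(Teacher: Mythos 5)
Your proof is correct and takes essentially the same approach as the paper: both arguments are reindexings of a witnessing ladder, with the negation case handled by passing to a ladder of length $k$ obtained by dropping the first and last entries (the paper routes this through $\varphi^*$ via $c_i=a_{i+1}$, $d_i=b_i$, whereas you compose that with the reversal to land directly on $\varphi$ — a cosmetic difference). Your explicit truncation observation is a point the paper uses silently, and your justification of it (restricting a wide complete type to a smaller parameter set preserves wideness) is exactly right.
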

	
	\begin{proof}
		Assume that $\varphi(x,y)$ is $\ideal$-stable of ladder $k$.
		
		The opposite formula $\varphi^*(y,x)$ is $\ideal$-stable of ladder $k$. Otherwise, there is some model $M$ and some sequence $(b_1,a_1,\ldots,b_k,a_k)$ such that either $(b_1,a_1,\ldots,b_k,a_k)$ or $(a_k,b_k,\ldots,a_1,b_1)$ is $M$-independent, and with the property that $\varphi^*(b_i,a_j)$ holds if and only if $i\leq j$. Set $c_i=a_{k-i+1}$ and $d_i=b_{k-i+1}$ for $i=1,\ldots,k$. By construction, either $(d_k,c_k,\ldots,d_1,c_1)$ or $(c_1,d_1,\ldots,c_k,d_k)$ is $M$-independent, and $\varphi(c_i,d_j)$ holds if and only if $i\leq j$, which is a contradiction.
		
		Similarly, $\neg\varphi(x,y)$ is $\ideal$-stable of ladder $k+1$. To prove it, suppose to the contrary that there is some model $M$ and some tuple $(a_1,b_1,\ldots,a_{k+1},b_{k+1})$ such that either $(a_1,b_1,\ldots,a_{k+1},b_{k+1})$ or $(b_{k+1},a_{k+1},\ldots,b_1,a_1)$ is $M$-independent, witnessing that $\neg\varphi(x,y)$ is not $\ideal$-stable of ladder $k+1$. Define $c_i=a_{i+1}$ and $d_i=b_i$ for every $i=1,\ldots,k$. Then either $(d_1,c_1,\ldots,d_k,c_k)$ or $(c_k,d_k,\ldots,c_1,d_1)$ is $M$-independent, and $\varphi^*(d_i,c_j)$ holds if and only if $i\leq j$, contradicting the above paragraph.
	\end{proof}

	A key fact about $\ideal$-stable formulas is that their truth value depends only on the type over some fixed model of each of the coordinates, provided that the pairs of realizations are independent over such model. In the case of stable formulas and non-forking independence, this is a classical result \cite[Lemma 2.3]{EH12}.

	\begin{theorem}[Stationarity]\label{stat}
		Let $\varphi(x,y)$ be an $\ideal$-stable formula and let $M$ be a model. Assume that $(a,b)$ or $(b,a)$ is $M$-independent. Then for every $a^\prime$ and $b^\prime$ such that $a\equiv_Ma^\prime$, $b\equiv_Mb^\prime$, and $(a^\prime,b^\prime)$ or $(b^\prime,a^\prime)$ is $M$-independent, we have that $\varphi(a,b)$ holds if and only if $\varphi(a^\prime,b^\prime)$ does.
	\end{theorem}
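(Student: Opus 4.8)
The plan is to reduce to the case where only one coordinate changes, and then, assuming stationarity fails, to build a ladder that is forbidden by $\ideal$-stability. For the reduction, I would use Lemma \ref{comb}: since $\neg\varphi$ and the opposite formula $\varphi^*$ are again $\ideal$-stable, it is enough to prove the one-coordinate statement ``if $a\equiv_M a'$, one of $(a,b),(b,a)$ is $M$-independent, one of $(a',b),(b,a')$ is $M$-independent, then $\varphi(a,b)\Leftrightarrow\varphi(a',b)$'' (the variant changing the second coordinate being this statement for $\varphi^*$). Granting it, the general case follows by interpolation: choosing $b''\equiv_M b$ that is $\ideal_y$-wide over $Maa'$ (possible since wide partial types have realizations wide over any prescribed parameters) makes $(a,b'')$ and $(a',b'')$ both $M$-independent, and then $\varphi(a,b)\Leftrightarrow\varphi(a,b'')\Leftrightarrow\varphi(a',b'')\Leftrightarrow\varphi(a',b')$ by the one-coordinate statement for $\varphi^*$, for $\varphi$, and for $\varphi^*$.

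For the one-coordinate statement, suppose $\varphi(a,b)$ holds and $\varphi(a',b)$ fails (the hypotheses are symmetric in $a,a'$), and fix $k$ with $\varphi$ $\ideal$-stable of ladder $k$. Since $\ideal_x,\ideal_y$ are $S_1$, wide types do not fork over $\emptyset$ (hence over $M$), and unwinding the hypotheses I would reduce to the representative subcase where $(b,a)$ and $(a',b)$ are $M$-independent --- so $a$ is $\ideal_x$-wide over $Mb$ and $b$ is $\ideal_y$-wide over $Ma'$ --- the remaining subcases being symmetric via $\neg\varphi$, $\varphi^*$, and a forward rather than a reverse ladder. Then, starting from $b_k:=b$, $a_k:=a$, I would build by downward recursion on $m$ tuples $b_m\equiv_M b$ and $a_m\equiv_M a$ such that $(b_k,a_k,\ldots,b_m,a_m)$ is $M$-independent and $\varphi(a_i,b_j)$ holds iff $i\le j$ on the built indices: at stage $m<k$ choose $b_m$ to be $\ideal_y$-wide over the parameters built so far and to realize $\tpp(b/M)\cup\{\neg\varphi(a_i,y):i\text{ built}\}$, and then $a_m$ to be $\ideal_x$-wide over the parameters built so far (including $b_m$) and to realize $\tpp(a/M)\cup\{\varphi(x,b_j):j\text{ built}\}$. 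Read backwards, $(b_k,a_k,\ldots,b_1,a_1)$ is then an $M$-independent $k$-ladder for $\varphi$, contradicting Definition \ref{Istabformula}.

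The crux --- and the step I expect to be the main obstacle --- is to know that these two partial types are wide at every stage, so that the required wide realizations exist; this is where $\ideal$-stability is used. I would carry the invariants ``$\tpp(b/M)\cup\{\neg\varphi(a_i,y):i\text{ built}\}$ is $\ideal_y$-wide'' and ``$\tpp(a/M)\cup\{\varphi(x,b_j):j\text{ built}\}$ is $\ideal_x$-wide'', the base case using $\varphi(a,b)$ and a conjugate of $\neg\varphi(a',b)$ moving $a'$ to $a$. For the recursion step: an ideal being closed under finite unions, adjoining $\neg\varphi(a_m,y)$ or $\varphi(a_m,y)$ to a $\ideal_y$-wide partial type keeps one of the two wide, so it suffices to rule out that adjoining $\neg\varphi(a_m,y)$ destroys wideness. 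If it did, the $M$-invariant relation ``$\tpp(b/M)(y)\cup\{\neg\varphi(u_i,y):i\}\cup\{\neg\varphi(v,y)\}$ is not $\ideal_y$-wide'', which is equational by Fact \ref{S1eq}, would hold of the tuple (built $a_i$'s, $a_m$); since $a_m$ was chosen wide --- hence non-forking --- over the built $a_i$'s, Fact \ref{stateq} would force this relation to hold of every tuple with the same $M$-type. But the built $a_i$'s together with $a_m$ form, in the order of construction, a non-forking $M$-independent tuple of realizations of $\tpp(a'/M)$, so Lemma \ref{wideinter}, applied to the $\ideal_y$-wide partial type $\tpp(b/M)(y)\cup\{\neg\varphi(x_0,y)\}$ (wide because realized by $b$, which is $\ideal_y$-wide over $Ma'$), shows the corresponding instance of that relation \emph{fails} on any realization of its $M$-type --- a contradiction. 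The second invariant is treated identically, using the $\ideal_x$-wide partial type $\tpp(a/M)(x)\cup\{\varphi(x,y_0)\}$ (realized by $a$ over $Mb$) and that each $b_m$ is non-forking over the previously built $b_j$'s. Verifying the subcase reductions and that $M$-independence of the growing tuple is preserved is then routine bookkeeping.
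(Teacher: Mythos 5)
Your core construction is sound and is essentially the paper's: for the mixed-orientation configuration (one pair forward-independent, the other reverse-independent) you build a one-directional $M$-independent ladder by alternately realizing $\tpp(a/M)\cup\{\varphi(x,b_j):j \text{ built}\}$ and $\tpp(b/M)\cup\{\neg\varphi(a_i,y):i\text{ built}\}$, whose wideness at each stage is exactly what Lemma \ref{wideinter} gives (your detour through Facts \ref{S1eq} and \ref{stateq} is harmless but redundant: applying Lemma \ref{wideinter} directly to the tuple you have just constructed already yields the wideness, with no need for a contradiction set-up). The reverse-ordered tuple you obtain is indeed one of the two orientations forbidden by Definition \ref{Istabformula}, and your final two-coordinate interpolation via $b''$ is the right idea.

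The gap is the sentence claiming that the remaining orientation subcases of the one-coordinate statement are ``symmetric via $\neg\varphi$, $\varphi^*$, and a forward rather than a reverse ladder.'' Those symmetries only interchange the two \emph{mixed}-orientation subcases ($(a,b)$ with $(b,a')$, and $(b,a)$ with $(a',b)$). The two \emph{same}-orientation subcases --- both $(a,b)$ and $(a',b)$ $M$-independent, or both $(b,a)$ and $(b,a')$ $M$-independent --- are carried into each other (or into themselves) by these symmetries, and they do not admit the ladder construction at all: in the first, the hypotheses give wide extensions in the $y$-variable over any $x$-parameters (namely $\tpp(b/M)\cup\{\varphi(a,y)\}$ and $\tpp(b/M)\cup\{\neg\varphi(a',y)\}$ are wide) but no wideness of $\tpp(a/M)\cup\{\varphi(x,b)\}$ or of $\tpp(a/M)\cup\{\neg\varphi(x,b)\}$, so you cannot manufacture the $k$ distinct $a_i$'s a ladder requires; the second subcase is dual. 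These cases must be handled by a further interpolation --- insert $a''\equiv_Ma$ that is $\ideal_x$-wide over $M,b$ (respectively a $y$-side interpolant) and apply the mixed case twice --- which is exactly how the paper disposes of its second case, and is the same trick you already use at the two-coordinate level. The omission is not cosmetic: your own two-coordinate reduction invokes a same-orientation instance (comparing $(a,b)$ with $(a,b'')$ when the given independent pair is $(a,b)$), so without this extra step the argument does not close. Once the interpolation for the same-orientation subcases is added, the proof is complete and coincides in substance with the paper's.
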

	
	\begin{proof}
		We follow the lines of \cite[Theorem 4.11]{AMP242}. By Lemma \ref{comb}, the formulas $\neg\varphi(x,y)$, $\varphi^*(y,x)$ and $\neg\varphi^*(y,x)$ are $\ideal$-stable, so we may assume that $\varphi(a,b)$ holds and that $(a,b)$ is $M$-independent. We will distinguish two cases: $(b^\prime,a^\prime)$ is $M$-independent or $(a^\prime,b^\prime)$ is $M$-independent.
		
		Suppose first that $(b^\prime,a^\prime)$ is $M$-independent. Without loss of generality, we can assume that $b^\prime=b$. If $\varphi(a^\prime,b)$ does not hold, then we have two pairs $(a,b)$ and $(a^\prime,b)$ satisfying the following properties:
		\begin{enumerate}
			\item[$(1)$] the type $\tpp(a/M)=\tpp(a^\prime/M)$ is $\ideal_x$-wide,
			\item[$(2)$] the tuple $b$ is $\ideal_y$-wide over $M,a$, and the tuple $a^\prime$ is $\ideal_x$-wide over $M,b$,
			\item[$(3)$] the formula $\varphi(a,b)$ holds but $\varphi(a^\prime,b)$ does not.
		\end{enumerate}
		Using this, we will construct inductively a sequence $(a_i,b_i)_{i\geq 1}$ such that each finite sequence $(a_1,b_1,\ldots,a_k,b_k)$ is $M$-independent, and additionally $a_i\equiv_Ma$, $b_i\equiv_Mb$ and $\varphi(a_i,b_j)$ holds if and only if $i\leq j$. Clearly, this construction contradicts the $\ideal$-stability of $\varphi(x,y)$.
		
		Define $a_1=a$ and $b_1=b$. If $(a_1,b_1,\ldots,a_k,b_k)$ has been constructed, consider the partial type $\Phi(x,y)=p(x)\cup\{\neg\varphi(x,y)\}$ over $M$, where $p(x)=\tpp(a/M)$. By assumption we have that $\Phi(a^\prime,b)$ holds. Then, since $a^\prime$ is wide over $M,b$, the partial type $\Phi(x,b)$ is $\ideal_x$-wide. On the other hand, the tuple $b$ is $\ideal_y$-wide over $M$, and $(b_1,\ldots,b_k)$ is an $M$-independent tuple formed by realizations of $\tpp(b/M)$. Hence, by Lemma \ref{wideinter}, the partial type $\Phi(x,b_1)\cup\ldots\cup\Phi(x,b_k)$ is $\ideal_x$-wide. Set $a_{k+1}$ to be a realization of this partial type that is $\ideal_x$-wide over $M,a_{\leq k},b_{\leq k}$. Similarly, let $\Psi(x,y)=q(y)\cup\{\varphi(x,y)\}$, where $q(y)=\tpp(b/M)$. Now we have that $(a,b)$ is an $M$-independent tuple that realizes the partial type $\Psi(x,y)$, so $\Psi(a,y)$ is $\ideal_y$-wide. As before, the tuple $(a_1,\ldots,a_{k+1})$ is $M$-independent and it is formed by realizations of $\tpp(a/M)$, so $\Psi(a_1,y)\cup\ldots\cup\Psi(a_{k+1},y)$ is $\ideal_y$-wide by Lemma \ref{wideinter}. Let $b_{k+1}$ an element realizing this partial type that is $\ideal_y$-wide over $M,a_{\leq k+1},b_{\leq k}$. By induction, we obtain the required sequences.
		
		Finally, suppose that $(a^\prime,b^\prime)$ is $M$-independent, and let $a^{\prime\prime}\equiv_Ma$ be $\ideal_x$-wide over $M,b$. Then $(b,a^{\prime\prime})$ is $M$-independent and by the first part of the proof we obtain that $\varphi(a^{\prime\prime},b)$ holds. Equivalently, the formula $\varphi^*(b,a^{\prime\prime})$ holds. Again by the first part of the proof applied to the formula $\varphi^*(y,x)$ and the tuples $(a^{\prime\prime},b)$ and $(a^\prime,b^\prime)$, the formula $\varphi^*(b^\prime,a^\prime)$ also holds, that is  $\varphi(a^\prime,b^\prime)$ holds, as we wanted to prove.
	\end{proof}
	
	\begin{cor}\label{permutation}
		Suppose that $\varphi(x,y)$ is $\ideal$-stable of ladder $k$. Then there is no tuple $(a_1,b_1,\ldots,a_k,b_k)$ and no model $M$ such that $\varphi(a_i,b_j)$ holds if and only if $i\leq j$ and such that some permutation in $S_{2k}$ of $(a_1,b_1,\ldots,a_k,b_k)$ is $M$-independent.
	\end{cor}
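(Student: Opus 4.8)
The plan is to argue by contradiction, transforming a ``badly ordered'' independent witness of the half-graph into a ``properly ordered'' one and then invoking Theorem \ref{stat}. So suppose towards a contradiction that $\varphi(x,y)$ is $\ideal$-stable of ladder $k$ but there are a model $M$, a tuple $(a_1,b_1,\ldots,a_k,b_k)$ with $\varphi(a_i,b_j)\Leftrightarrow i\leq j$, and a permutation $\sigma\in S_{2k}$ such that the $\sigma$-reordering of $(a_1,b_1,\ldots,a_k,b_k)$ is $M$-independent. The goal is to produce a new tuple $(a_1^\prime,b_1^\prime,\ldots,a_k^\prime,b_k^\prime)$ which is $M$-independent in this very order and still satisfies $\varphi(a_i^\prime,b_j^\prime)\Leftrightarrow i\leq j$, contradicting Definition \ref{Istabformula}.

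First I would record the elementary observation that any subtuple of an $M$-independent tuple, taken in the induced order, is again $M$-independent: this follows because wideness is preserved when one shrinks the parameter set (if $\tpp(c/B)$ is wide and $A\subseteq B$, then $\tpp(c/A)$ is wide, since any formula of $\ideal_x$ implied by the restriction would already be implied by $\tpp(c/B)$). In particular each $\tpp(a_i/M)$ is $\ideal_x$-wide and each $\tpp(b_i/M)$ is $\ideal_y$-wide, and for every pair of indices $i,j$ one of $(a_i,b_j)$, $(b_j,a_i)$ is $M$-independent, namely whichever order $\sigma$ lists these two coordinates.

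Next, using that wide partial types extend to wide complete types over larger parameter sets (as noted after Definition \ref{wide&indep}), I would build recursively an $M$-independent tuple $(a_1^\prime,b_1^\prime,a_2^\prime,b_2^\prime,\ldots,a_k^\prime,b_k^\prime)$ with $a_i^\prime\equiv_M a_i$ and $b_i^\prime\equiv_M b_i$ for all $i$: at each stage, extend the relevant wide type ($\tpp(a_i/M)$ or $\tpp(b_i/M)$) to a wide complete type over $M$ together with all previously chosen coordinates, and take the next coordinate to be a realization of it. By the observation above, every subtuple of this new tuple is $M$-independent; in particular, for $i\leq j$ the pair $(a_i^\prime,b_j^\prime)$ is $M$-independent, since $a_i^\prime$ precedes $b_j^\prime$, and for $i>j$ the pair $(b_j^\prime,a_i^\prime)$ is $M$-independent.

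Finally, fix $i,j\in\{1,\ldots,k\}$. Then $a_i\equiv_M a_i^\prime$, $b_j\equiv_M b_j^\prime$, one of $(a_i,b_j)$, $(b_j,a_i)$ is $M$-independent and one of $(a_i^\prime,b_j^\prime)$, $(b_j^\prime,a_i^\prime)$ is $M$-independent, so Theorem \ref{stat} applied to $\varphi(x,y)$ yields $\varphi(a_i,b_j)\Leftrightarrow\varphi(a_i^\prime,b_j^\prime)$. Hence $\varphi(a_i^\prime,b_j^\prime)\Leftrightarrow i\leq j$, and $(a_1^\prime,b_1^\prime,\ldots,a_k^\prime,b_k^\prime)$ is an $M$-independent witness of the half-graph of height $k$, contradicting the $\ideal$-stability of ladder $k$ of $\varphi(x,y)$. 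The only delicate point, more bookkeeping than genuine obstacle, is keeping track of which of the two orders of a given pair of coordinates is the independent one, both inside the $\sigma$-reordered tuple and inside the newly constructed tuple; in both cases it reduces to monotonicity of wideness under shrinking the parameters.
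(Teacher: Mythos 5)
Your proposal is correct and follows essentially the same route as the paper: recursively build an $M$-independent tuple $(a_1^\prime,b_1^\prime,\ldots,a_k^\prime,b_k^\prime)$ realizing the same types over $M$ by successively extending wide types over larger parameter sets, and then transfer the truth values $\varphi(a_i,b_j)\Leftrightarrow i\leq j$ pairwise via Theorem \ref{stat}. The only cosmetic difference is that you make explicit the monotonicity of wideness under shrinking parameters (so that subtuples of $M$-independent tuples are $M$-independent), which the paper leaves implicit in its ``by hypothesis'' step.
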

	
	\begin{proof}
		Let $(a_1,b_1,\ldots,a_k,b_k)$ be a tuple such that $\varphi(a_i,b_j)$ holds if and only if $i\leq j$, and suppose to obtain a contradiction that there is some permutation in $S_{2k}$ of $(a_1,b_1,\ldots,a_k,b_k)$ which is independent over some model $M$. We will construct inductively an $M$-independent tuple $(c_1,d_1,\ldots,c_k,d_k)$ contradicting the $\ideal$-stability of ladder $k$ of $\varphi(x,y)$.
		
		Start with $c_1=a_1$, which is $\ideal_x$-wide over $M$. Assume that we have already found $c_1,d_1,\ldots,c_n$ for some $n<k$ as required. Since $b_n$ is $\ideal_y$-wide over $M$, there exists $d_n\equiv_Mb_n$ that is $\ideal_y$-wide over $M,c_{\leq n},d_{\leq n-1}$. Similarly, $a_{n+1}$ is $\ideal_x$-wide over $M$, so there is some $c_{n+1}\equiv_Ma_{n+1}$ that is $\ideal_x$-wide over $M,c_{\leq n},d_{\leq n}$. Hence, by induction, we obtain a sequence $(c_1,d_1,\ldots,c_k,d_k)$ which is $M$-independent. Therefore, we will be done if we prove that $\varphi(c_i,d_j)$ holds if and only if $i\leq j$. For every $1\leq i,j\leq k$ we have that:
		\begin{itemize}
			\item $(a_i,b_j)$ or $(b_j,a_i)$ is $M$-independent (by hypothesis),
			\item $(c_i,d_j)$ or $(d_j,c_i)$ is $M$-independent (by construction),
			\item $a_i\equiv_Mc_i$ and $b_j\equiv_Md_j$.
		\end{itemize}
		Then, by Theorem \ref{stat}, the formula $\varphi(c_i,d_j)$ holds if and only $i\leq j$.
	\end{proof}
	
	Recall that a $\varphi$-formula is a Boolean combination of formulas of the form $\varphi(x,b)$ and $\neg\varphi(x,b)$. A global $\varphi$-type is a maximal consistent set of $\varphi$-formulas. The aim of the rest of this section is to obtain some definability results for wide global $\varphi$-types when $\varphi(x,y)$ is an $\ideal$-stable formula.
	
	\begin{lemma}[Invariance]\label{inv}
		Let $\varphi(x,y)$ be an $\ideal$-stable formula and let $\gl{p}(x)$ be an $\ideal_x$-wide global $\varphi$-type. If $b$ and $b^\prime$ are $\ideal_y$-wide over a model $M$ and $b\equiv_Mb^\prime$, then $\varphi(x,b)\in\gl{p}$ if and only if $\varphi(x,b^\prime)\in\gl{p}$.
	\end{lemma}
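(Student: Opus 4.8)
The plan is to reduce the statement to the Stationarity Theorem \ref{stat} by producing, for a fixed realization of $\gl{p}$, a pair that is independent over $M$. Let $a \models \gl{p}$; since $\gl{p}$ is $\ideal_x$-wide, $a$ is $\ideal_x$-wide over $M$. Now suppose $\varphi(x,b) \in \gl{p}$, so $\varphi(a,b)$ holds. The idea is that we want to be able to apply Theorem \ref{stat} to the pairs $(a,b)$ and $(a,b^\prime)$, but this requires independence over $M$, which $a$ (a realization of a global type) need not satisfy relative to $b$ or $b^\prime$. So first I would work with a fresh realization: since $\gl{p}$ is global and $\ideal_x$-wide, its restriction to any set remains $\ideal_x$-wide, so there exists $a^\prime \models \gl{p}\restriction_{M,b,b^\prime}$ that is $\ideal_x$-wide over $M,b,b^\prime$. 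Then both $(b,a^\prime)$ and $(b^\prime,a^\prime)$ are $M$-independent: indeed $b$ (resp. $b^\prime$) is $\ideal_y$-wide over $M$ by hypothesis, and $a^\prime$ is $\ideal_x$-wide over $M,b$ (resp. over $M,b^\prime$) by choice. Since $a^\prime \models \gl{p}$ and $\varphi(x,b) \in \gl{p}$, we have $\varphi(a^\prime,b)$.

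Next, from $b \equiv_M b^\prime$ pick an automorphism $f \in \Aut_M(\MB)$ with $f(b) = b^\prime$, and set $a^{\prime\prime} = f(a^\prime)$. Then $a^{\prime\prime} \equiv_M a^\prime$, the pair $(b^\prime, a^{\prime\prime})$ is $M$-independent (apply $f$ to the independent pair $(b,a^\prime)$, using $\emptyset$-invariance of the ideals), and $\varphi(a^{\prime\prime}, b^\prime)$ holds. Now I would apply Theorem \ref{stat} to the two $M$-independent pairs $(b, a^\prime)$ and $(b^\prime, a^{\prime\prime})$ — more precisely to the opposite formula, or just noting that Theorem \ref{stat} is symmetric in which coordinate carries which ideal once we invoke Lemma \ref{comb}: since $a^\prime \equiv_M a^{\prime\prime}$, $b \equiv_M b^\prime$, both $(b,a^\prime)$ and $(b^\prime,a^{\prime\prime})$ are $M$-independent, and $\varphi(a^\prime, b)$ holds, we conclude $\varphi(a^{\prime\prime}, b^\prime)$ holds — which we already knew; the useful direction is the converse read of the same equivalence. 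Let me restate this cleanly: apply Theorem \ref{stat} with the roles being the pair $(a^\prime, b)$ and the pair $(a^{\prime\prime}, b^\prime)$, where $a^\prime \equiv_M a^{\prime\prime}$ and $b \equiv_M b^\prime$, both $(b,a^\prime)$ and $(b^\prime, a^{\prime\prime})$ are $M$-independent; Stationarity gives $\varphi(a^\prime, b) \Leftrightarrow \varphi(a^{\prime\prime}, b^\prime)$.

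Finally I would close the loop back to $\gl{p}$. We want to conclude $\varphi(x,b^\prime) \in \gl{p}$ from $\varphi(x,b) \in \gl{p}$. Suppose not, so $\neg\varphi(x,b^\prime) \in \gl{p}$ and hence $\neg\varphi(a^\prime, b^\prime)$ (as $a^\prime \models \gl{p}$). But also take $a^{\prime\prime\prime} = f(a^\prime)$ — wait, the cleanest route is: since $\neg\varphi(x,y)$ is $\ideal$-stable by Lemma \ref{comb}, run the argument above with $\neg\varphi$ in place of $\varphi$. If both $\varphi(x,b) \in \gl{p}$ and $\neg\varphi(x,b^\prime) \in \gl{p}$, then choosing $a^\prime \models \gl{p}\restriction_{M,b,b^\prime}$ $\ideal_x$-wide over $M,b,b^\prime$ we get $\varphi(a^\prime,b)$ and $\neg\varphi(a^\prime,b^\prime)$ simultaneously, with $(b,a^\prime)$ and $(b^\prime, a^\prime)$ both $M$-independent and $b \equiv_M b^\prime$; applying Theorem \ref{stat} (with $a^\prime = a^{\prime\prime}$ literally) to these two pairs forces $\varphi(a^\prime, b) \Leftrightarrow \varphi(a^\prime, b^\prime)$, a contradiction. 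The symmetric implication ($\varphi(x,b^\prime) \in \gl{p} \Rightarrow \varphi(x,b) \in \gl{p}$) follows by exchanging $b$ and $b^\prime$.

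The main obstacle is purely bookkeeping: Theorem \ref{stat} demands the pair to be $M$-independent, whereas a realization of a global type is typically as far from independent over $M$ as possible, so one must not use $\gl{p}$'s realizations directly — instead one replaces $\gl{p}$ by a sufficiently generic (over $M,b,b^\prime$) realization, which is legitimate precisely because $\gl{p}$ is a global $\ideal_x$-wide type and widthness is preserved under restriction and under extension (as recorded in Section \ref{prel}). Once that substitution is in place, everything is a direct invocation of Stationarity and of $\emptyset$-invariance of the ideals under $\Aut_M(\MB)$; no new combinatorial input beyond Theorem \ref{stat} and Lemma \ref{comb} is needed.
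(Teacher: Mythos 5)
Your proposal is correct and, once you discard the detour through the automorphism $f$ and the auxiliary element $a^{\prime\prime}$ (which you yourself recognize as circular), the argument you settle on is exactly the paper's proof: realize $\gl{p}\vert_{M,b,b^\prime}$ by some $a^\prime$ that is $\ideal_x$-wide over $M,b,b^\prime$, note that $(b,a^\prime)$ and $(b^\prime,a^\prime)$ are both $M$-independent, and apply Theorem \ref{stat} to contradict $\varphi(a^\prime,b)\land\neg\varphi(a^\prime,b^\prime)$. The bookkeeping point you flag (one must pass to a sufficiently generic realization of the global type) is precisely the content of the paper's one-line proof.
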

	
	\begin{proof}
		Let $b$ and $b^\prime$ be $\ideal_y$-wide tuples over $M$ such that $b\equiv_Mb^\prime$, and assume for a contradiction that $\varphi(x,b)\land\neg\varphi(x,b^\prime)\in\gl{p}$. Let $c$ be a realization of $\gl{p}\vert_{M,b,b^\prime}$ that is $\ideal_y$-wide over $M,b,b^\prime$. Then both $(b,c)$ and $(b^\prime,c)$ are $M$-independent, but $\varphi(b,c)$ holds and $\varphi(b^\prime,c)$ does not, contradicting Theorem \ref{stat}.
	\end{proof}
	
	\begin{lemma}\label{def1}
		Let $\varphi(x,y)$ be an $\ideal$-stable formula. Let $\gl{p}(x)$ be an $\ideal_x$-wide global $\varphi$-type, and let $M$ be a model. Then for every $c\in\MB^{x}$ such that $\tpp(c/M)\cup\gl{p}(x)$ is $\ideal_x$-wide and every $b$ that is $\ideal_y$-wide over $M,c$, we have:
		\begin{align*}
			\varphi(x,b)\in\gl{p}\Leftrightarrow\varphi(c,b)\emph{ holds}.
		\end{align*}
	\end{lemma}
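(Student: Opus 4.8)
The plan is to deduce the equivalence from the Stationarity Theorem \ref{stat}. Fix $c$ and $b$ as in the statement and write $\psi(x)$ for the formula $\varphi(x,b)$ if $\varphi(x,b)\in\gl{p}$, and for $\neg\varphi(x,b)$ otherwise; since $\gl{p}$ is a maximal consistent set of $\varphi$-formulas, exactly one of these two cases occurs, so $\psi$ is well defined and $\psi\in\gl{p}$. Since $\tpp(c/M)\cup\gl{p}(x)$ is $\ideal_x$-wide, so is its subtype $\tpp(c/M)\cup\{\psi(x)\}$, and hence --- using the extension property for wide types recalled in Section \ref{prel} --- there is a realization $c^\prime$ of $\tpp(c/M)\cup\{\psi(x)\}$ that is $\ideal_x$-wide over $M,b,c$.

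Next I would verify the independence hypotheses needed to apply Theorem \ref{stat} to the pairs $(c,b)$ and $(c^\prime,b)$. On the one hand, $\tpp(c/M)$ is $\ideal_x$-wide and, by assumption, $b$ is $\ideal_y$-wide over $M,c$, so $(c,b)$ is $M$-independent. On the other hand, $b$ is $\ideal_y$-wide over $M$ and $c^\prime$ is $\ideal_x$-wide over $M,b$, so this time it is the reversed pair $(b,c^\prime)$ that is $M$-independent. As $c\equiv_M c^\prime$ (both realize $\tpp(c/M)$) and trivially $b\equiv_M b$, Theorem \ref{stat} yields that $\varphi(c,b)$ holds if and only if $\varphi(c^\prime,b)$ holds. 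Since $c^\prime\models\psi(x)$, the latter happens precisely when $\psi(x)=\varphi(x,b)$, that is, precisely when $\varphi(x,b)\in\gl{p}$; combining the two equivalences gives the lemma.

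The two points requiring a little care are, first, that Theorem \ref{stat} permits combining a pair independent ``in one order'' with a pair independent ``in the other order'', which is exactly the situation above; and, second, the choice of $c^\prime$: one cannot in general arrange for $(c^\prime,b)$ itself to be $M$-independent, since there is no control over whether $b$ remains $\ideal_y$-wide over $M,c^\prime$, so instead we exploit the wideness of $c^\prime$ over $M,b$ to obtain independence of $(b,c^\prime)$. No use of Lemma \ref{inv} is needed, although it would give an alternative route.
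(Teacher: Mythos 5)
Your proof is correct and follows essentially the same route as the paper: the paper likewise picks a realization $c^\prime$ of $\tpp(c/M)\cup\gl{p}(x)\vert_{M,b}$ that is $\ideal_x$-wide over $M,b$, notes that $(c,b)$ and $(b,c^\prime)$ are $M$-independent, and applies Theorem \ref{stat}. Your explicit isolation of the single formula $\psi(x)$ from $\gl{p}$ and the remark about why one must use the reversed pair $(b,c^\prime)$ are just a more detailed rendering of the same argument.
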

	
	\begin{proof}
		Let $c\in\MB^{x}$ be as in the statement. Observe that in particular $c$ is $\ideal_x$-wide over $M$. Now, for every $b$ that is $\ideal_y$-wide over $M,c$, consider a realization $c^\prime$ of $\tpp(c/M)\cup\gl{p}(x)\vert_{M,b}$ that is $\ideal_x$-wide over $M,b$. Then both $(c,b)$ and $(b,c^\prime)$ are $M$-independent tuples, and $c\equiv_Mc^\prime$. Hence, by the choice of $c^\prime$ and Theorem \ref{stat}, we have that
		\begin{align*}
			\varphi(x,b)\in\gl{p}\Leftrightarrow\varphi(c^\prime,b)\textrm{ holds}\Leftrightarrow\varphi(c,b)\textrm{ holds},
		\end{align*}
		as desired.
	\end{proof}
	
	The next corollary is an immediate consequence of this definability of types.
	
	\begin{cor}[Harrington for wide types]\label{harr}
		Let $\varphi(x,y)$ be an $\ideal$-stable formula and let $M$ be a model. If $\gl{p}(x)$ is an $\ideal_x$-wide global $\varphi$-type and $\gl{q}(y)$ is an $\ideal_y$-wide global $\varphi^*$-type, then 
		\begin{align*}
			\varphi(x,d)\in\gl{p}(x)\Leftrightarrow\varphi(c,y)\in\gl{q}(y),
		\end{align*}
		for every $c\in\MB^{x}$, $d\in\MB^{y}$ such that $\tpp(c/M)\cup\gl{p}(x)$ is $\ideal_x$-wide and $\tpp(d/M)\cup\gl{q}(y)$ is $\ideal_y$-wide.
	\end{cor}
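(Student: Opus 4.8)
The plan is as follows. First I would dualize: by Lemma \ref{comb} the opposite formula $\varphi^*(y,x)$ is $\ideal$-stable, so Lemmas \ref{inv} and \ref{def1}, as well as Theorem \ref{stat}, all apply equally to $\varphi^*$ together with the $\ideal_y$-wide global $\varphi^*$-type $\gl{q}$. I also record that, since $\tpp(c/M)\cup\gl{p}(x)$ is $\ideal_x$-wide, a fortiori $\tpp(c/M)$ is $\ideal_x$-wide, and symmetrically $\tpp(d/M)$ is $\ideal_y$-wide. The obstruction to invoking Lemma \ref{def1} directly for the pair $(c,d)$ is that $d$ need not be $\ideal_y$-wide over $M,c$ (nor $c$ over $M,d$), and, unlike non-forking independence, $\ideal$-wideness is not symmetric; so I will replace $c$ and $d$ by sufficiently generic $M$-conjugates on the two sides and glue the outcomes with the stationarity theorem.

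Concretely, using that a wide partial type over $M$ extends to a wide complete type over any small superset of $M$ (as recalled in Section \ref{prel}), choose inside $\MB$ a realization $d_0\models\tpp(d/M)$ that is $\ideal_y$-wide over $M,c$, and a realization $c_0\models\tpp(c/M)$ that is $\ideal_x$-wide over $M,d$. On the $\gl{p}$ side: since $\tpp(c/M)\cup\gl{p}(x)$ is $\ideal_x$-wide and $d_0$ is $\ideal_y$-wide over $M,c$, Lemma \ref{def1} gives $\varphi(x,d_0)\in\gl{p}\iff\varphi(c,d_0)$ holds; and since $d$ and $d_0$ are both $\ideal_y$-wide over $M$ with $d\equiv_M d_0$, Lemma \ref{inv} gives $\varphi(x,d)\in\gl{p}\iff\varphi(x,d_0)\in\gl{p}$, whence $\varphi(x,d)\in\gl{p}\iff\varphi(c,d_0)$ holds. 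On the $\gl{q}$ side, applying Lemma \ref{def1} to $\varphi^*$ with the parameter $d$ (using that $\tpp(d/M)\cup\gl{q}(y)$ is $\ideal_y$-wide and that $c_0$ is $\ideal_x$-wide over $M,d$) yields $\varphi(c_0,y)\in\gl{q}\iff\varphi(c_0,d)$ holds, and Lemma \ref{inv} for $\varphi^*$ (with $c\equiv_M c_0$, both $\ideal_x$-wide over $M$) gives $\varphi(c,y)\in\gl{q}\iff\varphi(c_0,y)\in\gl{q}$; hence $\varphi(c,y)\in\gl{q}\iff\varphi(c_0,d)$ holds.

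It remains to identify the two truth values $\varphi(c,d_0)$ and $\varphi(c_0,d)$. By construction $(c,d_0)$ is $M$-independent (as $c$ is $\ideal_x$-wide over $M$ and $d_0$ is $\ideal_y$-wide over $M,c$) and $(d,c_0)$ is $M$-independent (as $d$ is $\ideal_y$-wide over $M$ and $c_0$ is $\ideal_x$-wide over $M,d$); moreover $c\equiv_M c_0$ and $d_0\equiv_M d$. Theorem \ref{stat} therefore gives that $\varphi(c,d_0)$ holds if and only if $\varphi(c_0,d)$ holds. Chaining the three equivalences yields $\varphi(x,d)\in\gl{p}\iff\varphi(c,y)\in\gl{q}$, as desired. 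The only genuine subtlety is the one flagged above: the failure of symmetry of $\ideal$-wideness forces the detour through two independent pairs glued by the stationarity theorem, rather than a single direct application of Lemma \ref{def1}; everything else is bookkeeping about which type over $M$ each auxiliary element realizes and over which sets it is wide.
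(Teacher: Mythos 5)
Your proposal is correct and follows essentially the same route as the paper: the paper likewise picks $c'\equiv_M c$ wide over $M,d$ and $d'\equiv_M d$ wide over $M,c$, applies Lemmas \ref{inv} and \ref{def1} on each side to reduce to the truth values of $\varphi(c,d')$ and $\varphi(c',d)$, and identifies these via Theorem \ref{stat}. Your $c_0,d_0$ are exactly the paper's $c',d'$, and your bookkeeping of which tuple is wide over which set is accurate.
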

	
	\begin{proof}
		Since $c$ is $\ideal_x$-wide over $M$, we can find $c^\prime\equiv_Mc$ that is $\ideal_x$-wide over $M,d$. Similarly, as $d$ is $\ideal_y$-wide over $M$, there exists some $d^\prime\equiv_Md$ that is $\ideal_y$-wide over $M,c$. By Lemma \ref{inv} and Lemma \ref{def1},
		\begin{align*}
			\varphi(x,d)\in\gl{p}(x)\Leftrightarrow\varphi(x,d^\prime)\in\gl{p}(x)\Leftrightarrow\varphi(c,d^\prime)\textrm{ holds}
		\end{align*}
		and
		\begin{align*}
			\varphi(c,y)\in\gl{q}(y)\Leftrightarrow\varphi(c^\prime,y)\in\gl{q}(y)\Leftrightarrow\varphi(c^\prime,d)\textrm{ holds}.
		\end{align*}
		The equivalence follows from Theorem \ref{stat}.
	\end{proof}
	
	We conclude this part by proving that if $\varphi(x,y)$ is $\ideal$-stable, then the set of tuples $b$ that are $\ideal_y$-wide over a model $M$ and for which $\varphi(x,b)$ belongs to a given $\ideal_x$-wide global $\varphi$-type is relatively definable in the set of $\ideal_y$-wide tuples over $M$.
	
	\begin{theorem}\label{def2}
		Let $\varphi(x,y)$ be an $\ideal$-stable formula. Let $\gl{p}(x)$ be an $\ideal_x$-wide global $\varphi$-type, and let $M$ be a model. Then there exists an $L(M)$-formula $\psi(y)$ such that for every $b$ that is $\ideal_y$-wide over $M$,
		\begin{align*}
			\varphi(x,b)\in\gl{p}\Leftrightarrow \psi(b)\emph{ holds}.
		\end{align*}
	\end{theorem}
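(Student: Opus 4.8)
The plan is to deduce the relative definability of $\gl{p}$ from the pointwise definability of Lemma \ref{def1} together with a compactness argument over the space $S_y(M)$ of complete types over $M$ in the variable $y$.

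First I would fix an element $c$ with $\tpp(c/M)\cup\gl{p}(x)$ being $\ideal_x$-wide: such $c$ exists because $\gl{p}(x)$, being $\ideal_x$-wide, extends to a complete $\ideal_x$-wide type, and any realization $c$ of such a type satisfies $\tpp(c/M)\cup\gl{p}(x)\subseteq$ that type, hence $\tpp(c/M)\cup\gl{p}(x)$ is $\ideal_x$-wide. This is the hypothesis under which Lemma \ref{def1} applies. The obstruction is that a tuple $b$ which is merely $\ideal_y$-wide over $M$ need not be $\ideal_y$-wide over $M,c$, so Lemma \ref{def1} cannot be used on $b$ directly. To get around this, given such a $b$ I would pick $b'$ realizing a wide extension of $\tpp(b/M)$ over $M,c$, so that $b'\equiv_M b$ and $b'$ is $\ideal_y$-wide over $M,c$. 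Then Lemma \ref{inv} gives that $\varphi(x,b)\in\gl{p}$ if and only if $\varphi(x,b')\in\gl{p}$, and Lemma \ref{def1} gives that $\varphi(x,b')\in\gl{p}$ if and only if $\varphi(c,b')$ holds. Consequently, for every $\ideal_y$-wide type $r\in S_y(M)$ the partial type $r(y)\cup\{\neg\chi(y):\chi(y)\in\ideal_y\text{ over }M,c\}$ is consistent and all of its realizations agree on $\varphi(c,y)$, and this common truth value records whether $\varphi(x,b)\in\gl{p}$ for $b\models r$ (well defined by Lemma \ref{inv}); call $r$ \emph{positive} in the case that $\varphi(c,y)$ holds on these realizations.

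Next I would compactify. Let $[W]\subseteq S_y(M)$ be the closed set of $\ideal_y$-wide types over $M$. For each $r\in[W]$, compactness yields an $L(M)$-formula $\theta_r(y)\in r$ and an $\ideal_y$-formula $\chi_r(y)$ over $M,c$ (the latter obtained by collapsing finitely many instances of $\chi$, using that $\ideal_y$ is closed under finite unions) such that $\theta_r(y)\wedge\neg\chi_r(y)\vdash\varphi(c,y)$ if $r$ is positive and $\theta_r(y)\wedge\neg\chi_r(y)\vdash\neg\varphi(c,y)$ otherwise. Since $[W]$ is compact and the clopen sets $[\theta_r]$ cover it, finitely many suffice: $[W]\subseteq[\theta_{r_1}]\cup\dots\cup[\theta_{r_n}]$. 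Writing $I_+$ for the set of indices $i$ with $r_i$ positive, I would claim that $\psi(y):=\bigvee_{i\in I_+}\theta_{r_i}(y)$ is the required formula.

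Finally I would verify this. Given $b$ that is $\ideal_y$-wide over $M$, pick $b'\equiv_M b$ that is $\ideal_y$-wide over $M,c$ as above. For any index $i$ such that $\theta_{r_i}(b)$ holds, we also have that $\theta_{r_i}(b')\wedge\neg\chi_{r_i}(b')$ holds, so by the choice of $\theta_{r_i}$, $\chi_{r_i}$ and Lemmas \ref{def1} and \ref{inv} we conclude that $\varphi(x,b)\in\gl{p}$ if $i\in I_+$ and $\neg\varphi(x,b)\in\gl{p}$ if $i\notin I_+$. Since $\tpp(b/M)\in[W]$, at least one index $i$ satisfies that $\theta_{r_i}(b)$ holds; hence if $\varphi(x,b)\in\gl{p}$ then every such index lies in $I_+$ and $\psi(b)$ holds, while if $\neg\varphi(x,b)\in\gl{p}$ then no index of $I_+$ is such an index and $\psi(b)$ does not hold. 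The step I expect to be the main obstacle is exactly this passage from $M$ to $M,c$: being $\ideal_y$-wide over $M$ does not entail being $\ideal_y$-wide over $M,c$, which is what forces the auxiliary conjugate $b'$ and, correspondingly, the appearance of $\ideal_y$-formulas over $M,c$ rather than over $M$ in the compactness argument; once this is arranged, the rest is routine.
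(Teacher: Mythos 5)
Your argument is correct, and it rests on exactly the same two pillars as the paper's proof: the pointwise definability of Lemma \ref{def1} (applied after replacing a tuple by an $M$-conjugate that is wide over $M,c$) and the compactness of the closed set of $\ideal_y$-wide types in $S_y(M)$. The difference is purely in how the compactness is packaged. The paper shows that the positive set $X_{\gl{p},\varphi}$ is type-definable over $M$ by an existentially quantified partial type $\exists x(\Phi(x,y)\cup\{\varphi(x,y)\})$, does the same for $X_{\gl{p},\neg\varphi}$ (which requires invoking Lemma \ref{comb} to know that $\neg\varphi$ is again $\ideal$-stable), and then observes that two complementary closed subsets of the compact space $W$ are clopen, hence given by a single formula. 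You instead run a local compactness argument at each wide type $r$ — extracting a formula $\theta_r\in r$ and a single ideal formula $\chi_r$ over $M,c$ that decide the answer — and then take a finite subcover; the positive/negative dichotomy is absorbed into the choice of $I_+$, so you never need the stability of $\neg\varphi$. A second cosmetic difference: to reconcile "wide over $M$" with "wide over $M,c$", the paper moves the witness $c$ by an automorphism fixing $M$ and sending a conjugate $b'$ back to $b$, whereas you keep $c$ fixed, move $b$ to $b'$, and transfer the conclusion back via the invariance Lemma \ref{inv}. Both devices are sound; your version trades the existential quantifier and the closed-complement argument for one extra use of Lemma \ref{inv}, and is if anything slightly more self-contained.
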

	
	\begin{proof}
		We will prove that the set
		\begin{align*}
			X_{\gl{p},\varphi}=\{b\in\MB^{y}:b\textrm{ is $\ideal_y$-wide over $M$ and $\varphi(x,b)\in\gl{p}$}\}
		\end{align*}
		is relatively definable over $M$ in the set of $\ideal_y$-wide tuples over $M$. Let $p\in S(M)$ be a complete $\ideal_x$-wide type such that $p(x)\cup\gl{p}(x)$ is $\ideal_x$-wide.
		
		\begin{claim}
			Fix some realization $c_0$ of $p(x)$. Then, the set
			\begin{align*}
				Y=\{(c,b)\in\MB^{x}\times\MB^{y}:\emph{$c$ realizes $p(x)$ and $b$ is $\ideal_y$-wide over $M,c$}\}
			\end{align*}
			is type-definable over $M$ by the partial type
			\begin{align*}
				\Phi(x,y)=p(x)\cup\{\neg\phi(x,y)\in L(M):\phi(c_0,y)\in\ideal_y\}.
			\end{align*}
		\end{claim}
		
		\begin{claimproof}
			Suppose that $(c,b)\in Y$. Obviously, $c$ realizes $p(x)$. Now let $\phi(x,y)$ be an $L(M)$-formula such that $\phi(c_0,y)\in\ideal_y$. Since $c_0\equiv_Mc$ and $\ideal_y$ is $\emptyset$-invariant, then $\phi(c,y)\in\ideal_y$. This implies that $\neg\phi(c,b)$ holds, because $b$ is $\ideal_y$-wide over $M,c$. In conclusion, $(c,b)$ realizes $\Phi(x,y)$. Similarly, assume that $(c,b)$ realizes $\Phi(x,y)$. It is again obvious that $c$ realizes $p(x)$, and we only have to prove that $b$ is $\ideal_y$-wide over $M,c$. If not, there is a formula $\phi(x,y)\in L(M)$ such that $\phi(c,y)\in\ideal_y$ and $\phi(c,b)$ holds. As before, by the $\emptyset$-invariance of $\ideal_y$ we obtain that $\phi(c_0,y)\in \ideal_y$, which contradicts the fact that $(c,b)$ realizes $\Phi(x,y)$.
		\end{claimproof}
		
		\begin{claim}
			The set $X_{\gl{p},\varphi}$ is type definable over $M$ by the partial type
			\begin{align*}
				\Psi(y)=\exists x(\Phi(x,y)\cup\{\varphi(x,y)\}).
			\end{align*}
		\end{claim}
		
		\begin{claimproof}
			If $b$ realizes $\Psi(y)$, then there is some $c$ realizing $p(x)$ such that $b$ is $\ideal_y$-wide over $M,c$ and $\varphi(c,b)$ holds. By Lemma \ref{def1} $\varphi(x,b)\in\gl{p}$, that is, $b\in X_{\gl{p},\varphi}$. On the other hand, if $b\in X_{\gl{p},\varphi}$, then $\varphi(x,b)\in\gl{p}$ and $b$ is $\ideal_y$-wide over $M$. Now let $c^\prime$ be any realization of $p(x)$ and let $b^\prime\equiv_Mb$ be an $\ideal_y$-wide tuple over $M,c^\prime$. Then there exists an automorphism $f\in\Aut_M(\MB)$ such that $f(b^\prime)=b$. The tuple $c:=f(c^\prime)$ realizes $p(x)$, and $b$ is $\ideal_y$-wide over $M,c$. Hence, $\varphi(c,b)$ holds by Lemma \ref{def1} and $b$ realizes $\Psi(y)$.
		\end{claimproof}
		
		Consider the topological space $S_y(M)$, and let $W\subset S_y(M)$ be the closed subset formed by the $\ideal_y$-wide types. With the induced topology, $W$ is compact, Hausdorff and $0$-dimensional, and a basis of clopen sets is given by $[\psi]_W=\{p\in W:\psi\in p\}$ for any formula $\psi(y)\in L(M)$ with $\psi(y)\notin\ideal_y$.
		
		By the previous claim, $X_{\gl{p},\varphi}$ is type-definable over $M$ by a partial type $\Psi(y)$. Thus, the set $[X_{\gl{p},\varphi}]$ defined by
		\begin{align*}
			\bigcap_{\psi\in\Psi}[\psi]=\{\tpp(b/M):b\in X_{\gl{p},\varphi}\}
		\end{align*}
		is a closed subset of $S_y(M)$ that is contained in $W$, so it is a closed subset of $W$. Similarly, since $\neg\varphi(x,y)$ is $\ideal$-stable, the set $[X_{\gl{p},\neg\varphi}]$ is also a closed subset of $W$. But $[X_{\gl{p},\neg\varphi}]=W\setminus[X_{\gl{p},\varphi}]$, so $[X_{\gl{p},\varphi}]$ is clopen in $W$, and by compactness there is some $L(M)$-formula $\psi(y)$ such that $\psi(y)\notin\ideal_y$ and $[X_{\gl{p},\varphi}]=[\psi]_W$. Hence, $X_{\gl{p},\varphi}$ is relatively definable over $M$ by $\psi(y)$ in the set of $\ideal_y$-wide tuples over $M$.
	\end{proof}
	
	Given an $\ideal_x$-wide global $\varphi$-type $\gl{p}(x)$ and a model $M$, we will denote an $L(M)$-formula $\psi(y)$ as in the above theorem by $\deftp{\gl{p},M}{\varphi(y)}$, so for every tuple $b$ that is $\ideal_y$-wide over $M$ we have that
	\begin{align*}
		\varphi(x,b)\in\gl{p}\Leftrightarrow\deftp{\gl{p},M}{\varphi(b)}\textrm{ holds}.
	\end{align*}
	In contrast with the stable context, the formula $\deftp{\gl{p},M}{\varphi(y)}$ need not be a Boolean combination of instances of $\varphi^*(y,x)$.
	
	\section{Topological constructions}\label{topology}
	
	Recall that $\ideal_x$ and $\ideal_y$ are $\emptyset$-invariant $S_1$-ideals defined in, respectively, $L_x(\MB)$ and $L_y(\MB)$. The results from the previous section show that when $\varphi(x,y)$ is $\ideal$-stable, we have a very good understanding of the behavior of $\ideal_x$-wide global $\varphi$-types, modulo formulas with parameters that are not $\ideal_y$-wide over a fixed model. With this motivation, we introduce a new topology in the set of $\varphi$-types. The clopen sets of the usual topology in $S_\varphi(\MB)$ are the sets of the form
	\begin{align*}
		[\psi(x)]=\{\gl{p}\in S_\varphi(\MB):\psi\in\gl{p}\}
	\end{align*}
	for every $\varphi$-formula $\psi(x)$. We now modify this topology:

	\begin{definition}
		Let $M$ be a model and let $\varphi(x,y)$ be an arbitrary formula. Let $\wide{M}$ be the set of elements of $\MB^{y}$ that are $\ideal_y$-wide over $M$. We define $\tau_M$ to be the topology in $S_\varphi(\MB)$ generated by all the finite intersections of sets of the form $[\varphi(x,b)]$ and $[\neg\varphi(x,b)]$ when $b$ ranges over $\wide{M}$.
	\end{definition}
	
	Consider the set $S_\varphi^{\rm w}(\MB)$ formed by the $\ideal_x$-wide global $\varphi$-types. With the induced topology, the space $\left(S_\varphi^{\rm w}(\MB),\tau_M\right)$ is $0$-dimensional and compact, but not Hausdorff.
	
	\begin{lemma}\label{clequiv}
		For all $\gl{p}(x)$ and $\gl{q}(x)$ in $S_\varphi^{\rm w}(\MB)$ the following are equivalent:
		\begin{enumerate}
			\item[$(1)$] $\varphi(x,b)\in\gl{p}$ if and only if $\varphi(x,b)\in\gl{q}$ for every $b$ that is $\ideal_y$-wide over $M$;
			\item[$(2)$] $\gl{p}\in\rm{cl}(\gl{q})$, i.e. $\gl{q}$ is a specialization of $\gl{p}$;
			\item[$(3)$] $\rm{cl}(\gl{p})=\rm{cl}(\gl{q})$.
		\end{enumerate}
	\end{lemma}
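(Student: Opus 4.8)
The plan is to unwind the definition of $\tau_M$ and reduce the whole statement to an elementary computation with the closure operator. The key preliminary observation is that, since $\gl{p}$ and $\gl{q}$ are \emph{complete} $\varphi$-types, for every $b\in\MB^{\abs{y}}$ exactly one of $\varphi(x,b)$, $\neg\varphi(x,b)$ belongs to $\gl{p}$ (and likewise for $\gl{q}$); consequently condition $(1)$ is equivalent to the a priori weaker statement that, for every $b\in\wide{M}$ and every sign, if $\pm\varphi(x,b)\in\gl{p}$ then $\pm\varphi(x,b)\in\gl{q}$. In particular, $(1)$ is symmetric in $\gl{p}$ and $\gl{q}$. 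I would also record that when computing $\mathrm{cl}(\gl{q})$ it suffices to test membership of $\gl{q}$ in the \emph{basic} open sets of $\tau_M$ containing $\gl{p}$, namely the finite intersections $\bigcap_{i=1}^n[\pm\varphi(x,b_i)]$ with $b_i\in\wide{M}$ and $\pm\varphi(x,b_i)\in\gl{p}$.

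Granting this, the implication $(2)\Rightarrow(1)$ is immediate: if $\gl{p}\in\mathrm{cl}(\gl{q})$ and $\varphi(x,b)\in\gl{p}$ with $b\in\wide{M}$, then $[\varphi(x,b)]$ is an open neighbourhood of $\gl{p}$, hence contains $\gl{q}$, i.e. $\varphi(x,b)\in\gl{q}$; applying the same to $[\neg\varphi(x,b)]$ yields the reformulated form of $(1)$. Conversely, for $(1)\Rightarrow(2)$: given any basic open $U=\bigcap_{i=1}^n[\pm\varphi(x,b_i)]\ni\gl{p}$ as above, condition $(1)$ forces $\pm\varphi(x,b_i)\in\gl{q}$ for each $i$, so $\gl{q}\in U$; as $U$ was an arbitrary basic neighbourhood of $\gl{p}$, we get $\gl{p}\in\mathrm{cl}(\gl{q})$.

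Finally I would deduce the equivalence with $(3)$. The direction $(3)\Rightarrow(2)$ is trivial since $\gl{p}\in\mathrm{cl}(\gl{p})=\mathrm{cl}(\gl{q})$. For $(2)\Rightarrow(3)$ I would use the symmetry of $(1)$: by the equivalence $(1)\Leftrightarrow(2)$ just proved, condition $(2)$ also yields $\gl{q}\in\mathrm{cl}(\gl{p})$, whence $\mathrm{cl}(\gl{p})\subseteq\mathrm{cl}(\mathrm{cl}(\gl{q}))=\mathrm{cl}(\gl{q})$ and symmetrically $\mathrm{cl}(\gl{q})\subseteq\mathrm{cl}(\gl{p})$, so $\mathrm{cl}(\gl{p})=\mathrm{cl}(\gl{q})$. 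There is no genuine obstacle here; the only points demanding a modicum of care are the reduction to basic open sets and the fact—built into the definition of $\tau_M$—that only parameters $b\in\wide{M}$ appear among the subbasic sets, which is exactly why condition $(1)$ quantifies over $\wide{M}$ rather than over all of $\MB^{\abs{y}}$.
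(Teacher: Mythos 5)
Your proof is correct and follows essentially the same elementary point-set argument as the paper, resting on the same two observations (completeness of the $\varphi$-types, so each subbasic set or its complement contains a given type, and reduction to basic neighbourhoods); the only cosmetic difference is that you prove $(1)\Leftrightarrow(2)$ and then $(2)\Leftrightarrow(3)$ via the symmetry of $(1)$, whereas the paper runs the cycle $(1)\Rightarrow(2)\Rightarrow(3)\Rightarrow(1)$.
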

	
	\begin{proof}
		To prove that $(1)$ implies $(2)$, observe that if $\gl{p}\notin\rm{cl}(\gl{q})$, then we can find a clopen set $U$ of the form $[\varphi(x,b)]$ or $[\neg\varphi(x,b)]$ with $b\in\wide{M}$ such that $\gl{p}\in U$ and $\gl{q}\notin U$. This clearly contradicts $(1)$. To see that $(2)$ implies $(3)$, it is enough to prove that $\gl{q}\in\rm{cl}(\gl{p})$. Otherwise, we can find a clopen set $U$ as above such that $\gl{q}\in U$ and $\gl{p}\notin U$. Then $S_\varphi^{\rm w}(\MB)\setminus U$ is an open set that contains $\gl{p}$ and that does not contain $\gl{q}$, which contradicts point $(2)$. Finally, it is obvious that $(3)$ implies $(1)$: if $\gl{p}$ and $\gl{q}$ differ in a formula of the form $\varphi(x,b)$ with $b\in\wide{M}$, then this formula yields the existence of a clopen set $U$ such that $\gl{p}\in U$ and $\gl{q}\notin U$, which is not possible by $(3)$.
	\end{proof}
	
	In light of the previous result, we define an equivalence relation on $S_\varphi^{\rm w}(\MB)$ as follows:
	\begin{align*}
		\gl{p}\sim_M\gl{q}\Leftrightarrow\rm{cl}(\gl{p})=\rm{cl}(\gl{q}).
	\end{align*}
	Then the space $\left(S_\varphi^{\rm w}(\MB),\tau_M\right)/\sim_M$ is compact, Hausdorff and $0$-dimensional. Moreover, if $\varphi(x,y)$ is $\ideal$-stable this construction does not depend on the model $M$:
	
	\begin{lemma}\label{samecl}
		Let $M\prec N$ be two models, and assume that $\varphi(x,y)$ is $\ideal$-stable. Then the topologies $\tau_M$ and $\tau_N$ coincide in $S_\varphi^{\rm w}(\MB)$.
	\end{lemma}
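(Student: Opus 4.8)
The plan is to compare the two topologies on $S_\varphi^{\rm w}(\MB)$ via their natural subbases and to establish the two inclusions separately, the first being trivial and the second relying on invariance of wide $\varphi$-types. Recall that a subbasis for the induced topology $\tau_M$ on $S_\varphi^{\rm w}(\MB)$ is given by the sets $[\varphi(x,b)]\cap S_\varphi^{\rm w}(\MB)$ and $[\neg\varphi(x,b)]\cap S_\varphi^{\rm w}(\MB)$ with $b\in\wide{M}$, and likewise for $\tau_N$ with $b\in\wide{N}$.

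First I would check that $\tau_N$ is coarser than $\tau_M$, which does not use $\ideal$-stability. Since $M\prec N$, if $\tpp(b/N)$ is $\ideal_y$-wide then so is its restriction $\tpp(b/M)$, because any formula of $\ideal_y$ implied by $\tpp(b/M)$ would also be implied by $\tpp(b/N)$; hence $\wide{N}\subseteq\wide{M}$. Consequently every subbasic $\tau_N$-open set is already a subbasic $\tau_M$-open set, so $\tau_N\subseteq\tau_M$ on $S_\varphi^{\rm w}(\MB)$.

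For the reverse inclusion I would use Lemma \ref{inv}. Fix $b\in\wide{M}$. Since $\tpp(b/M)$ is $\ideal_y$-wide, by the standard extension property for wide types recalled in Section \ref{prel} there is a complete $\ideal_y$-wide type over $N$ extending it; let $b^\prime$ realize it, so that $b^\prime\equiv_M b$ and $b^\prime\in\wide{N}\subseteq\wide{M}$. Now $b$ and $b^\prime$ are both $\ideal_y$-wide over the model $M$ and $b\equiv_M b^\prime$, so Lemma \ref{inv} yields, for every $\ideal_x$-wide global $\varphi$-type $\gl{p}$, that $\varphi(x,b)\in\gl{p}$ if and only if $\varphi(x,b^\prime)\in\gl{p}$. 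Thus, as subsets of $S_\varphi^{\rm w}(\MB)$, we get $[\varphi(x,b)]=[\varphi(x,b^\prime)]$, and taking complements inside $S_\varphi^{\rm w}(\MB)$ also $[\neg\varphi(x,b)]=[\neg\varphi(x,b^\prime)]$. The right-hand sides are subbasic $\tau_N$-open sets, so every subbasic $\tau_M$-open set is $\tau_N$-open, giving $\tau_M\subseteq\tau_N$, and hence $\tau_M=\tau_N$ on $S_\varphi^{\rm w}(\MB)$.

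The only step requiring attention is the application of Lemma \ref{inv}: it is essential that both $b$ and the replacement $b^\prime$ be $\ideal_y$-wide over the \emph{same} model $M$, which is why one passes to a wide extension of $\tpp(b/M)$ over $N$ rather than to an arbitrary $N$-conjugate of $b$. Everything else is routine bookkeeping with subbases, so I do not expect any genuine obstacle; in particular no use of the Cantor--Bendixson machinery or of the definability results of Section \ref{Istab} is needed here.
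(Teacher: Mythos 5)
Your proof is correct and follows essentially the same route as the paper: the inclusion $\wide{N}\subseteq\wide{M}$ gives one direction for free, and the other is obtained by replacing $b\in\wide{M}$ with an $M$-conjugate $b'$ that is $\ideal_y$-wide over $N$ and invoking Lemma \ref{inv}. The only difference is cosmetic: you compare the subbasic open sets directly, whereas the paper runs the same computation through closures of points via Lemma \ref{clequiv}; your formulation is if anything slightly more direct.
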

	
	\begin{proof}
		To avoid any confusion, denote by $\textrm{cl}_M$ and $\textrm{cl}_N$ the closure operators in, respectively, $\tau_M$ and $\tau_N$. It suffices to show that for all $\ideal_x$-wide global $\varphi$-types $\gl{p}$ and $\gl{q}$, we have that $\gl{p}\in\textrm{cl}_M(\gl{q})$ if and only if $\gl{p}\in\textrm{cl}_N(\gl{q})$. Assume first that $\gl{p}\in\textrm{cl}_N(\gl{q})$, and let $b$ be a tuple that is $\ideal_y$-wide over $M$. Then we can find a tuple $b^\prime\equiv_Mb$ that is $\ideal_y$-wide over $N$, so by Lemma \ref{inv} and Lemma \ref{clequiv} we have that
		\begin{align*}
			\varphi(x,b)\in\gl{p}\Leftrightarrow\varphi(x,b^\prime)\in\gl{p}\Leftrightarrow\varphi(x,b^\prime)\in\gl{q}\Leftrightarrow\varphi(x,b)\in\gl{q}.
		\end{align*}
		Hence, $\gl{p}\in\rm{cl}_M(\gl{q})$. The other direction is trivial, since $\wide{N}\subset\wide{M}$.
	\end{proof}
	
	We now define a new topological space which we will use to identify the elements of the quotient $\left(S_\varphi^{\rm w}(\MB),\tau_M\right)/\sim_M$ with certain partial types.
	
	\begin{definition}\label{XM}
		Let $M$ be a model and let $\varphi(x,y)$ be an arbitrary formula. Let $\gl{X}_{\varphi,M}$ be the set of $\ideal_x$-wide partial $\varphi$-types that are maximal consistent sets of Boolean combinations $\psi(x)$ of formulas of the form $\varphi(x,b)$ and $\neg\varphi(x,b)$ when $b$ ranges over $\wide{M}$. When the context is clear we refer to this set just as $\gl{X}_M$.
	\end{definition}
	
	The set $\gl{X}_M$ has a natural structure of $0$-dimensional topological space, where a basis of clopen sets is given by
	\begin{align*}
		[\psi(x)]_M=\{\gl{p}\in\gl{X}_M:\psi(x)\in\gl{p}\},
	\end{align*}
	for every formula $\psi(x)$ as in Definition \ref{XM}. With this topology, the space $\gl{X}_M$ is Hausdorff. Moreover, since the restriction map $S_\varphi^{\rm w}(\MB)\to\gl{X}_M$ defined as $\gl{p}\mapsto\gl{p}\vert_{\wide{M}}$ is continuous, then $\gl{X}_M$ is compact. Note also that for every two models $M\prec N$, the map
	\begin{align*}
		\rho_{M,N}:\gl{X}_M\to\gl{X}_N\textrm{, }\gl{p}\mapsto\gl{p}\vert_{\wide{N}}
	\end{align*}
	is continuous and surjective.

	\begin{remark}\label{spaceshomeo}
		The restriction $S_\varphi^{\rm{w}}(\MB)\to\gl{X}_M$ defined above is surjective. Hence, the continuous bijection that it induces from the compact space $\left(S_\varphi^{\rm w}(\MB),\tau_M\right)/\sim_M$ into the Hausdorff space $\gl{X}_M$ is indeed an homeomorphism. In particular, when $\varphi(x,y)$ is $\ideal$-stable, for every two models $M\prec N$ we have by Lemma \ref{samecl} the following commutative diagram:
		\[
		\begin{tikzcd}[row sep=huge, column sep=huge]
			\left(S_\varphi^{\rm w}(\mathbb{M}),\tau_M\right)/\sim_M 
			\arrow[d, "\cong"'] 
			\arrow[r, "\mathrm{id}", "\cong"']
			& \left(S_\varphi^{\rm w}(\mathbb{M}),\tau_N\right)/\sim_N 
			\arrow[d, "\cong"] \\
			\mathfrak{X}_M 
			\arrow[r, "\rho_{M,N}"] 
			& \mathfrak{X}_N.
		\end{tikzcd}
		\]
		It follows that $\rho_{M,N}$ is an homeomorphism.
	\end{remark}
	
	\begin{remark}\label{resultsforXM}
		Assume that $\varphi(x,y)$ is $\ideal$-stable, and fix $\gl{p}\in \gl{X}_M$. On the one hand, it is clear by Lemma \ref{inv} that $\gl{p}$ is $M$-invariant, that is if $b$ and $b^\prime$ are $\ideal_y$-wide over $M$ and $b\equiv_Mb^\prime$, then $\varphi(x,b)\in\gl{p}$ if and only if $\varphi(x,b^\prime)\in\gl{p}$. On the other hand, if $c\in\MB^{x}$ is such that $\tpp(c/M)\cup\gl{p}(x)$ is $\ideal_x$-wide, then $\gl{p}$ can be extended to an $\ideal_x$-wide global $\varphi$-type $\gl{q}$ such that $\tpp(c/M)\cup\gl{q}(x)$ is $\ideal_x$-wide. Hence, by Lemma \ref{def1}, for every $b$ that is $\ideal_y$-wide over $M,c$ we have that
		\begin{align*}
			\varphi(x,b)\in\gl{p}\Leftrightarrow\varphi(x,b)\in\gl{q}\Leftrightarrow\varphi(c,b)\textrm{ holds}.
		\end{align*}
		Similarly, using the notation introduced after Theorem \ref{def2}, if $\gl{q}$ is any wide global $\varphi$-type such that $\gl{q}\vert_{\wide{M}}=\gl{p}$ then, for any $b$ that is $\ideal_y$-wide over $M$,
		\begin{align*}
			\varphi(x,b)\in\gl{p}\Leftrightarrow\varphi(x,b)\in\gl{q}\Leftrightarrow\deftp{\gl{q},M}{\varphi(b)}\textrm{ holds}.
		\end{align*}
	\end{remark}

	A classical result in stability theory states that if $\varphi(x,y)$ is stable, then the space of $\varphi$-types $S_\varphi(\MB)$ has finite Cantor-Bendixson rank \cite[Lemma 3.1]{AP96}. We now prove that the same is true for the space $\gl{X}_M$ in the $\ideal$-stable case. We will use the notions of sequence and binary tree (Definition \ref{bintree}) and a well-known result by Hodges (Theorem \ref{tree2}).
	
	\begin{theorem}\label{finiteCB}
		Let $M$ be a model and suppose that $\varphi(x,y)$ is $\ideal$-stable of ladder $k$. Then the Cantor-Bendixson rank of $\gl{X}_M$ is strictly less than $2^{k+1}-2$.
	\end{theorem}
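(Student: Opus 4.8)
The strategy is to run the classical argument bounding the Cantor--Bendixson rank of the space of $\varphi$-types for a stable formula $\varphi$ (as in \cite[Lemma 3.1]{AP96}) and to adapt its two ingredients to the wide$/S_1$ setting. I would argue by contradiction: assuming the rank of $\gl{X}_M$ is at least $2^{k+1}-2$, I first produce a binary tree of instances of $\varphi$ with wide parameters, then extract from it a ladder of height $k$, and finally observe that --- thanks to the $S_1$ hypothesis and the stationarity results of Section \ref{Istab} --- this ladder can be placed in $M$-independent position, contradicting Corollary \ref{permutation}.

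For the first step, recall that by Definition \ref{XM} the space $\gl{X}_M$ is the Stone space of the Boolean algebra $\mcl{B}$ generated by the clopen sets $[\varphi(x,b)]_M$ with $b\in\wide{M}$. Assuming its Cantor--Bendixson rank is at least $2^{k+1}-2$, I would invoke Theorem \ref{tree2} together with Definition \ref{bintree} to obtain a binary tree of the height needed below: parameters $b_s\in\wide{M}$, indexed by the nodes $s$ of a finite binary tree, such that every branch $\eta$ yields an $\ideal_x$-wide partial $\varphi$-type $\{\varphi(x,b_{\eta\restriction i})^{\eta(i)}:i<\abs{\eta}\}$, hence one realised by a point of $\gl{X}_M$. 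Some care is needed to pass from the binary tree of clopens that the rank bound furnishes directly to one whose successive splittings are single instances $\varphi(x,b_s)$; it is in this passage that the precise value $2^{k+1}-2$ is consumed, exactly as in the stable case.

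The heart of the proof is then to extract from this tree a ladder of height $k$ which is, moreover, $M$-independent. The combinatorial skeleton is classical: realising $k$ appropriately chosen branches by elements $a_1,\dots,a_k$ and reading the parameters $c_1,\dots,c_k$ off the nodes of a fixed branch gives, after the usual computation, that $\varphi(a_i,c_j)$ holds if and only if $i\le j$. The new content is the independence, and here I would (i) first replace the tree by an $M$-indiscernible binary tree with the same consistency property, by a standard extraction of indiscernibles, so that along a branch the $S_1$ axiom for $\ideal_y$ (Definition \ref{S1}) forces the parameter sequence $(c_1,\dots,c_k)$ to be $M$-independent; and (ii) choose each realisation $a_i$ to be $\ideal_x$-wide over $M$ together with all the tree parameters and $a_1,\dots,a_{i-1}$, which is possible since wide partial types extend to wide complete types over any parameter set. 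Then $(c_1,\dots,c_k,a_1,\dots,a_k)$ is $M$-independent and is a permutation of $(a_1,c_1,\dots,a_k,c_k)$, so Corollary \ref{permutation} is contradicted. I expect the main obstacle to be exactly this interaction: one must run the classical tree-to-ladder extraction while keeping every newly introduced object wide over the parameters already in play and, symmetrically, making the tree indiscernible enough that the extracted staircase is independent over $M$ --- the point at which the $S_1$ axiom, Lemma \ref{wideinter}, and Theorem \ref{stat} replace the purely combinatorial reasoning that suffices in the stable setting.
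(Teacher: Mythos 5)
Your overall architecture matches the paper's: from a nonempty $n$-th Cantor--Bendixson derivative of $\gl{X}_M$ one builds a binary tree of instances $\varphi(x,b_\tau)$ with $b_\tau\in\wide{M}$, realises the branch types widely, extracts a ladder of height $k$ via Theorem \ref{tree2}, and contradicts Corollary \ref{permutation}. However, there is a genuine gap in step (i), where you obtain the $M$-independence of the parameter sequence. You propose to pass to an $M$-indiscernible tree and let ``the $S_1$ axiom for $\ideal_y$ force $(c_1,\dots,c_k)$ to be $M$-independent.'' This does not work: the $S_1$ property (Definition \ref{S1}) is a statement about when instances $\varphi(x,a_0)$ with parameters running along an indiscernible sequence lie in the ideal in the variable $x$; it says nothing about the types $\tpp(c_j/M,c_{<j})$ of the parameters themselves being $\ideal_y$-wide. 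The implication available in this setting goes the other way (wide implies non-forking, by \cite[Lemma 2.9]{EH12}), and indiscernibility alone is compatible with, say, $c_{j+1}$ being definable over $M,c_j$, in which case $\tpp(c_{j+1}/M,c_j)$ is certainly not wide. So after your extraction the hypothesis of Corollary \ref{permutation} --- that \emph{some} permutation of the $2k$-tuple is $M$-independent --- is not secured.

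The correct mechanism, and the one the paper uses, is Lemma \ref{inv}: every $\gl{p}\in\gl{X}_M$ is invariant under $M$-conjugation of wide parameters, so at each splitting step of the tree construction the witnessing parameter $b_\tau$ (which is wide over $M$ because $[\varphi(x,b_\tau)]_M$ is a clopen of $\gl{X}_M$) may be replaced by an $M$-conjugate that is wide over $M$ together with all previously chosen nodes, without changing which of the two types contains $\varphi(x,b_\tau)$. Independence is thus built into the tree \emph{before} Hodges's extraction, and since wideness over a larger parameter set passes to subsets, any sub-selection of nodes and (widely realised) branches is automatically $M$-independent in the order required by Corollary \ref{permutation}. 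Your point (ii) about the branch realisations is fine, and the rest of the outline is sound; note only that the bound $2^{k+1}-2$ is consumed in Theorem \ref{tree2} itself (tree of height $2^{k+1}-2$ yields a ladder of height $k$), not in the passage from clopen sets to single instances, which costs nothing.
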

	
	\begin{proof}
		Consider the linear order $\prec$ defined on $^{<\omega}2$ as follows. Let $\sigma\in{^r2}$ and $\tau\in{^s2}$. If $r\neq s$, then $\sigma\prec\tau$ if and only if $r<s$, and if $r=s$, then $\sigma\prec\tau$ if and only if $\sigma$ is smaller than $\tau$ with respect to the lexicographic order on $^r2$. Denote by $\gl{X}_M^{(n)}$ the $n$-th Cantor-Bendixson derivative of $\gl{X}_M$.
		
		\begin{claim*}
			If $\gl{X}_M^{(n)}\neq\emptyset$, then we can find $(\gl{p}_\sigma:\sigma\in{^n2})$ in $\gl{X}_M$ and tuples $(b_\tau:\tau\in{^{<n}2})$ such that:
			\begin{enumerate}
				\item[$(1)$] $(b_\tau:\tau\in{^{<n}2})$ is $M$-independent with respect to the order $\prec$,
				\item[$(2)$] $\varphi(x,b_{\sigma\vert i})^{\sigma(i)}\in\gl{p}_\sigma$ for every $i<n$ and every $\sigma\in{^n2}$, where $\varphi^0=\varphi$ and $\varphi^1=\neg\varphi$. 
			\end{enumerate}
		\end{claim*}
		
		\begin{claimproof*}
			Let $\gl{p}_0\in\gl{X}_M^{(n)}$. Then $\gl{p}_0\in\gl{X}_M^{(n-1)}$ and $\gl{p}_0$ is not isolated in $\gl{X}_M^{(n-1)}$ by the clopen set $\gl{X}_M$, so there is $\gl{p}_1\in\gl{X}_M^{(n-1)}$ such that $\gl{p}_0\neq\gl{p}_1$. Up to relabelling $\gl{p}_0$ and $\gl{p}_1$, we may assume that there exists a tuple $b_\emptyset$ such that $\varphi(x,b_\emptyset)\in\gl{p}_0$ and $\neg\varphi(x,b_\emptyset)\in\gl{p}_1$. Observe that $b_\emptyset$ is $\ideal_y$-wide over $M$. Since $\gl{p}_0$ and $\gl{p}_1$ are in $\gl{X}_M^{(n-1)}$, this process can be iterated. More precisely, assume that we have already defined $b_{\prec\tau}$ for some $\tau\in{^d2}$ and $d<n$, and that there exists $\gl{p}_\tau\in\gl{X}_M^{(n-d)}$ such that
			\begin{align*}
				\varphi(x,b_{\tau\vert i})^{\tau(i)}\in\gl{p}_\tau\textrm{ for every }i<d.
			\end{align*}
			Define $\gl{p}_{\tau^\smallfrown 0}=\gl{p}_\tau$. As before, $\gl{p}_{\tau^\smallfrown 0}\in\gl{X}_M^{(n-d-1)}$ and $\gl{p}_{\tau^\smallfrown 0}$ is not isolated in $\gl{X}_M^{(n-d-1)}$ by the clopen set
			\begin{align*}
				\left[\varphi(x,b_\emptyset)^{\tau(0)}\land\ldots\land\varphi(x,b_{\tau\vert d-1})^{\tau(d-1)}\right]_M,
			\end{align*}
			so we can find $\gl{p}_{\tau^\smallfrown 1}\in\gl{X}_M^{(n-d-1)}$ with $\gl{p}_{\tau^\smallfrown 0}\neq\gl{p}_{\tau^\smallfrown 1}$ and
			\begin{align*}
				\varphi(x,b_{\tau\vert i})^{\tau(i)}\in\gl{p}_{\tau^\smallfrown 1}\textrm{ for every }i<d.
			\end{align*}
			Up to relabelling $\gl{p}_{\tau^\smallfrown 0}$ and $\gl{p}_{\tau^\smallfrown 1}$, we may assume that there exists a tuple $b_\tau$ such that $\varphi(x,b_\tau)^0\in \gl{p}_{\tau^\smallfrown 0}$ and $\varphi(x,b_\tau)^1\in\gl{p}_{\tau^\smallfrown 1}$. Since $b_\tau$ is $\ideal_y$-wide over $M$, we may also assume by Lemma \ref{inv} that $b_\tau$ is $\ideal_y$-wide over $M,b_{\prec\tau}$. Therefore, by induction our claim holds.
		\end{claimproof*}
		
		Finally, let $n=2^{k+1}-2$. If the Cantor-Bendixson rank of $\gl{X}_M$ is at least $n$, then in particular $\gl{X}_M^{(n)}\neq\emptyset$, and we can find $(\gl{p}_\sigma:\sigma\in{^n2})$ in $\gl{X}_M$ and $(b_\tau:\tau\in{^{<n}2})$ as in the claim. Now, for every $\sigma\in{^n2}$, set recursively on $\prec$ the tuple $a_\sigma$ to be a realization of $\gl{p}_\sigma\vert_{M,a_{\prec\sigma},b_{\prec\sigma}}$ that is $\ideal_x$-wide over $M,a_{\prec\sigma},b_{\prec\sigma}$. Then we have obtained a binary tree for $\varphi(x,y)$ of height $n$ with the additional property that the tuple $(b_\tau:\tau\in{^{<n}2})^{\smallfrown}(a_\sigma:\sigma\in{^n2})$ is $M$-independent with respect to $\prec$. By Theorem \ref{tree2}, there are $a_0,\ldots,a_{k-1}\in\{a_\sigma:\sigma\in{^n2}\}$ and $b_0,\ldots,b_{k-1}\in\{b_\tau:\tau\in{^{<n}2}\}$ such that $\varphi(a_i,b_j)$ holds if and only if $i\leq j$. Since some permutation of $(a_0,b_0,\ldots,a_{k-1},b_{k-1})$ is $M$-independent with respect to the usual order by construction, we obtain a contradiction by Corollary \ref{permutation}. Thus, the Cantor-Bendixson rank of $\gl{X}_M$ is strictly less than $2^{k+1}-2$.
	\end{proof}
	
	\section{A graph regularity lemma for almost stable relations}\label{regularity}
	
	In this section we will not work with arbitrary $S_1$-ideals, but rather with ideals given by Keisler measures. Therefore, let $\mu=\mu_x$ and $\nu=\nu_y$ be two $\emptyset$-invariant global Keisler measures, and let $\ideal_x$ and $\ideal_y$ be the ideals formed, respectively, by the sets of $\mu$-measure zero and the sets of $\nu$-measure zero. Recall by Example \ref{measures} that in this case $\ideal_x$ and $\ideal_y$ are $\emptyset$-invariant and $S_1$. Instead of writing $\ideal_x$-wide and $\ideal_y$-wide, we will write $\mu$-wide and $\nu$-wide.
	
	\begin{definition}
		We say that a formula $\varphi(x,y)$ is \emph{almost sure stable with respect to $\mu$ and $\nu$} if it is $(\ideal_x,\ideal_y)$-stable according to Definition \ref{Istabformula}, where $\ideal_x$ and $\ideal_y$ are the ideals given by $\mu$ and $\nu$ described in the paragraph above.
	\end{definition}
	
	As usual, we identify every Keisler measure $\mu$ with its unique extension to a Borel regular probability measure on $S_x(\MB)$ \cite[Section 7.1]{PS14}. Given any model $N$, we denote by $\mu\vert_N$ the Borel regular probability measure on $S_x(N)$ associated with the restriction of $\mu$ to $L_x(N)$. Observe that if $r:S_x(\MB)\to S_x(N)$ is the canonical restriction map, then $\mu\vert_N$ is the pushforward measure $r_*(\mu)$, since for every clopen subset $U$ of $S_x(N)$ we have that
	\begin{align*}
		\mu\vert_N(U)=\mu(r^{-1}(U)),
	\end{align*}
	and by regularity the same holds for every Borel subset of $S_x(N)$.
	
	Indeed, the proof of \cite[Section 7.1]{PS14} shows that the above extension of finitely additive probability measures to Borel regular probability measures is a general topological fact:
	\begin{fact}\label{extension}
		If $X$ is a compact, Hausdorff and $0$-dimensional topological space and $\mu$ is a finitely additive probability measure on the clopen sets of $X$, then $\mu$ extends uniquely to a Borel regular probability measure on $X$.
	\end{fact}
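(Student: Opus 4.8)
The plan is to identify $X$ with the Stone space of the Boolean algebra $\mathcal{B}$ of its clopen subsets, to upgrade the finite additivity of $\mu$ to countable additivity on $\mathcal{B}$ using compactness, and then to produce the Borel extension through an outer-measure construction built from the open sets, so that regularity is automatic. The crucial first observation is that $\mu$ is already a premeasure on the algebra $\mathcal{B}$: if $(C_n)_{n\in\NB}$ is a sequence of pairwise disjoint clopen sets whose union $C=\bigcup_n C_n$ is again clopen, then $C$ is closed in the compact space $X$ and hence compact, so the open cover $\{C_n\}_{n\in\NB}$ of $C$ admits a finite subcover; by disjointness all but finitely many $C_n$ are empty, and countable additivity of $\mu$ on this family collapses to its finite additivity. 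Thus $\mu$ is a countably additive probability measure on the Boolean algebra $\mathcal{B}$.

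Next, since $\mathcal{B}$ is only a basis and need not generate the full Borel $\sigma$-algebra when $X$ is not second countable, I would not invoke the Carath\'eodory extension theorem on $\sigma(\mathcal{B})$ directly. Instead, for an open set $U\subseteq X$ put $\mu^\circ(U)=\sup\{\mu(C):C\in\mathcal{B},\,C\subseteq U\}$ (equivalently, the supremum over finite unions of basic clopen subsets of $U$, which is the same since $\mathcal{B}$ is closed under finite unions), and then define $\mu^*(A)=\inf\{\mu^\circ(U):A\subseteq U,\ U\text{ open}\}$ for arbitrary $A\subseteq X$. The standard verifications --- using once more that clopen subsets of $X$ are compact to handle countable unions --- show that $\mu^*$ is an outer measure, that $\mu^*$ agrees with $\mu^\circ$ on open sets and with $\mu$ on $\mathcal{B}$, and that every open, hence every Borel, subset of $X$ is $\mu^*$-measurable. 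Restricting $\mu^*$ to the Borel $\sigma$-algebra yields a Borel probability measure $\bar\mu$ extending $\mu$; it is outer regular by construction, and it is inner regular because for any Borel set $B$ one has $\bar\mu(B)=1-\bar\mu(X\setminus B)=1-\inf\{\bar\mu(U):X\setminus B\subseteq U,\ U\text{ open}\}=\sup\{\bar\mu(F):F\subseteq B,\ F\text{ closed}\}$, and closed subsets of $X$ are compact.

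For uniqueness, suppose $\bar\mu_1$ and $\bar\mu_2$ are Borel regular probability measures on $X$ both extending $\mu$. Given an open set $U$ and a compact $K\subseteq U$, finitely many basic clopen subsets of $U$ cover $K$, and their union is a clopen set $C$ with $K\subseteq C\subseteq U$; inner regularity then forces $\bar\mu_i(U)=\sup\{\mu(C):C\in\mathcal{B},\,C\subseteq U\}$ for $i=1,2$, so $\bar\mu_1$ and $\bar\mu_2$ agree on open sets, and hence, by outer regularity, on every Borel set.

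The main obstacle is precisely the one flagged above: because $X$ may fail to be second countable, $\mathcal{B}$ generates only a sub-$\sigma$-algebra of the Borel sets, so one cannot simply quote Hahn--Kolmogorov to land on the Borel $\sigma$-algebra, and the naive Carath\'eodory extension would leave regularity --- and with it uniqueness --- unaccounted for. The outer-measure-from-open-sets construction is what simultaneously manufactures a genuine Borel measure and imposes the regularity that pins the extension down. Every remaining step is a routine measure-theoretic verification, and all of them rest in the end on the single fact that the clopen subsets of a compact space are compact.
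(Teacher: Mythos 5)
Your proof is correct, and it is essentially the standard argument that the paper itself does not spell out but delegates to the citation of \cite[Section 7.1]{PS14}: compactness makes $\mu$ vacuously countably additive on the clopen algebra, and the outer-regular extension to open sets (using $0$-dimensionality to replace compact subsets of opens by clopen ones) produces the Borel measure and forces uniqueness. Your explicit handling of the point that the $\sigma$-algebra generated by the clopen sets need not be the full Borel $\sigma$-algebra in the non-metrizable case is a welcome refinement over simply quoting Carath\'eodory.
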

	We now apply this result to the topological constructions from the previous section. For every formula $\varphi(x,y)\in L$ and every model $M$, the measure $\mu$ induces a finitely additive probability measure on the clopen sets of the space $\gl{X}_{\varphi,M}$. Hence, the measure $\mu$ (more precisely, its restriction to the Boolean algebra of the formulas associated with the clopen sets of $\gl{X}_{\varphi,M}$) can be extended to a unique regular probability measure on $\gl{X}_{\varphi,M}$. Denote by $\mu_{\varphi,M}$ such extension.
	
	\begin{remark}\label{mscompatible}
		The measure $\mu_{\varphi,M}$ is the pushforward measure of $\mu$ via the continuous restriction map $r_{\varphi,M}:S_x(\MB)\to\gl{X}_{\varphi,M}$ given by $r_{\varphi,M}(\gl{p})=(\gl{p}\vert_\varphi)\vert_{\wide{M}}$, since for every clopen subset $U$ of $\gl{X}_{\varphi,M}$ we have that
		\begin{align*}
			\mu_{\varphi,M}(U)=\mu(r_{\varphi,M}^{-1}(U)),
		\end{align*}
		and the same holds by regularity for every Borel subset of $\gl{X}_{\varphi,M}$.
	\end{remark}
	
	To prove the main result of this section, namely Theorem \ref{regularmeasure}, we will make use of a result by Malliaris and Pillay concerning stable relations \cite[Lemma 2.1]{PM15} (see also \cite{AP20} and \cite[Theorem 5.3.9]{BB83}). This result can be formulated in purely topological terms as follows:
	
	\begin{fact}\label{partitions}
		Let $X$ be a compact, Hausdorff and $0$-dimensional topological space of finite Cantor-Bendixson rank, and let $\mu$ be a Borel regular probability measure on $X$. If $U\subset X$ is a clopen set with $\mu(U)>0$, then for every $\eps>0$ we can partition $U$ into finitely many pairwise disjoint clopen sets $U_1,\ldots,U_n$ in $U$ such that each $U_i$ contains a point $x_i$ with $\mu(\{x_i\})>0$ and
		\begin{align*}
			\mu(U_i)\leq\mu(\{x_i\})\cdot(1+\eps).
		\end{align*}
		In particular, we have $\mu(U_i\setminus\{x_i\})\leq\eps\cdot\mu(\{x_i\})$ for every $i=1,\ldots,n$.
	\end{fact}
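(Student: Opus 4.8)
The plan is to reduce everything to the fact that a regular Borel probability measure on a space of finite Cantor--Bendixson rank is purely atomic, and then to produce the partition by enclosing a suitable finite family of atoms in small clopen sets. For the atomicity step, set $D=\{x\in X:\mu(\{x\})>0\}$, which is countable as $\mu(X)=1$, and I would show $\mu(X\setminus D)=0$ by induction on the least $n$ with $X^{(n)}=\emptyset$, where $X^{(m)}$ denotes the $m$-th Cantor--Bendixson derivative. If $X^{(1)}=\emptyset$, then $X$ is discrete and compact, hence finite, and the claim is clear. For the inductive step, write $I$ for the open set of isolated points of $X$; since $I$ is discrete, inner regularity forces $\mu(A)=\sum_{x\in A}\mu(\{x\})$ for every Borel $A\subseteq I$, so $\mu(I\setminus D)=0$. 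Moreover $X^{(1)}$ is closed, hence compact Hausdorff, with $(X^{(1)})^{(n-1)}=\emptyset$, so applying the induction hypothesis to $\mu\vert_{X^{(1)}}$ (after normalising, if its total mass is positive) gives $\mu(X^{(1)}\setminus D)=0$; as $X\setminus D=(I\setminus D)\cup(X^{(1)}\setminus D)$, atomicity follows.

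Now fix a clopen $U$ with $\mu(U)>0$ and $\eps>0$. By atomicity the atoms contained in $U$ have masses summing to $\mu(U)$, a convergent series of positive reals; I would pick one of maximal mass, say $z_1$ with $a:=\mu(\{z_1\})>0$, and then a finite set $\{z_1,\dots,z_q\}$ of atoms of $U$ with $\sum_{z\in(D\cap U)\setminus\{z_1,\dots,z_q\}}\mu(\{z\})<\tfrac{\eps}{2}a$. Since $X$ is compact, Hausdorff and $0$-dimensional, there are pairwise disjoint clopen sets $V_1,\dots,V_q\subseteq U$ with $z_i\in V_i$; shrinking each $V_i$ and using that $\mu(\{z_i\})=\inf\{\mu(V):V\text{ clopen},\ z_i\in V\}$ (a consequence of outer regularity together with $0$-dimensionality), I may further arrange $\mu(V_1)<(1+\tfrac{\eps}{2})a$ and $\mu(V_i)<(1+\eps)\mu(\{z_i\})$ for $2\le i\le q$.

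Finally, set $U_1=V_1\cup\bigl(U\setminus(V_1\cup\dots\cup V_q)\bigr)$ and $U_i=V_i$ for $2\le i\le q$: these are pairwise disjoint clopen subsets of $U$ covering $U$, and each contains the atom $x_i:=z_i$. Atomicity gives $\mu\bigl(U\setminus(V_1\cup\dots\cup V_q)\bigr)=\mu(U)-\sum_i\mu(V_i)\le\mu(U)-\sum_i\mu(\{z_i\})<\tfrac{\eps}{2}a$, whence $\mu(U_1)<(1+\tfrac{\eps}{2})a+\tfrac{\eps}{2}a=(1+\eps)\mu(\{x_1\})$, while $\mu(U_i)<(1+\eps)\mu(\{x_i\})$ for $i\ge2$ by construction; the ``in particular'' clause then follows from $\mu(U_i\setminus\{x_i\})=\mu(U_i)-\mu(\{x_i\})$. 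The main point, and the only place where finite Cantor--Bendixson rank enters, is the atomicity statement; once one knows that the mass of $U$ is concentrated, up to an arbitrarily small error, on finitely many atoms, the rest is a routine manipulation of regular measures on a $0$-dimensional space.
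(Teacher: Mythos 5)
Your argument is correct, but it takes a genuinely different route from the paper's. The paper proves the statement by a direct induction on the Cantor--Bendixson rank of the clopen set $U$: by compactness there are finitely many points of maximal rank in $U$, and the proof splits into two cases according to whether some of them are atoms, separating the atoms by clopen sets of nearly minimal measure (via regularity) and recursing on the complement, which has strictly smaller rank. You instead factor everything through the structural claim that a regular Borel probability measure on a compact Hausdorff zero-dimensional space with $X^{(n)}=\emptyset$ is purely atomic, proved by induction on $n$, and then obtain the partition in one step: choose finitely many atoms of $U$ carrying all but an $\tfrac{\eps}{2}\mu(\{z_1\})$-fraction of $\mu(U)$, enclose them in disjoint clopen neighbourhoods of nearly minimal measure, and absorb the remainder into the piece around the heaviest atom. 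Your route cleanly isolates the only place the finite-rank hypothesis is needed (atomicity) and avoids the case analysis, at the cost of a preliminary lemma; the paper's route keeps everything inside a single induction on clopen sets and stays closer to the original argument of Malliaris and Pillay. The individual steps you rely on --- inner regularity forcing $\mu(A)=\sum_{x\in A}\mu(\{x\})$ for Borel subsets of the discrete set of isolated points, the identity $\mu(\{z\})=\inf\{\mu(V):V\ni z\text{ clopen}\}$ from outer regularity plus zero-dimensionality, and the final bookkeeping --- all check out, and both proofs read ``finite Cantor--Bendixson rank'' in the same (necessary) way, namely that some finite derivative is empty.
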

	
	\begin{proof}
		The proof is merely an adaptation of \cite[Lemma 2.1]{PM15}, and is by induction on the Cantor-Bendixson rank of the clopen sets.
		
		Fix $\eps>0$ and assume that the result is true for every clopen subset of $X$ of positive measure and rank less than $m$. Let $U$ be a clopen set with $\mu(U)>0$ and rank $m$. By compactness, there are finitely many points of maximal rank, say $x_1,\ldots,x_k$, and write $\alpha_i=\mu(\{x_i\})$. Let $J=\{i:\alpha_i>0\}$.
		
		Suppose first that $J\neq\emptyset$, and fix $i_0\in J$. By regularity, we can find a clopen set $U_{i_0}$ in $U$ such that $U_{i_0}\cap U^{(m)}=\{x_{i_0}\}\cup\{x_i:i\notin J\}$ and with $\mu(U_{i_0})\leq\alpha_{i_0}\cdot(1+\eps)$. On the other hand, for every $i\in J\setminus\{i_0\}$, let $U_i$ be a clopen set in $U$ such that $U_i\cap U^{(m)}=\{x_i\}$ with $\mu(U_i)\leq\alpha_i\cdot(1+\eps)$. We may assume that the sets $U_i$ are all pairwise disjoint. Then the complement in $U$ of $\bigcup_{i\in J}U_i$ has either measure zero, in which case we can adjoin it to $U_{i_0}$ obtaining a suitable partition for $U$, or it is a clopen set of positive measure and rank strictly less than $m$, in which case we obtain by induction a partition for $U\setminus \bigcup_{i\in J}U_i$ as in the statement. This partition, together with $\bigcup_{i\in J}U_i$ yields the required partition for $U$.
		
		Finally suppose that $J=\emptyset$, and consider the set $U\setminus\{x_1,\ldots,x_k\}$, which is open and has positive measure. By regularity, we can find a clopen subset $U^\prime$ of $X$ contained in $U\setminus\{x_1,\ldots,x_k\}$ with $\mu(U^\prime)>0$. Since the Cantor-Bendixson rank of $U^\prime$ is less than $m$, then by induction $U^\prime$ contains a point $x$ of positive measure. Clearly, the Cantor-Bendixson rank of $x$ is less than $m$, so there exists some $n<m$ and a clopen set $W$ in  $U$ such that $x$ is isolated in $U^{(n)}$ by $W$. Now observe that $W$ has positive measure, because $x\in W$, and additionally $x_1,\ldots,x_k\notin W$. Therefore, the Cantor-Bendixson rank of $W$ is strictly less than $m$, and by induction we can partition $W$ into clopen sets $W_1,\ldots,W_r$ in $W$ such that each $W_i$ contains a point $y_i$ of positive measure and $\mu(W_i)\leq\mu(\{y_i\})\cdot(1+\eps)$. Moreover, we can fix $i_0\in\{1,\ldots,r\}$ and we can assume that $x_1,\ldots,x_k\in W_{i_0}$ just by adjoining to $W_{i_0}$ a clopen set $W^\prime$ in $U$ which is disjoint from the sets $W_i$, containing $x_1,\ldots,x_k$ and whose measure is $\mu(W^\prime)\leq \mu(\{y_{i_0}\})\cdot(1+\eps)-\mu(W_{i_0})$. We conclude as above by considering the complement in $U$ of $\bigcup_{i=1}^r W_i$.
	\end{proof}
	
	We will now construct a Keisler measure in $L_{xy}(\MB)$ which naturally extends both $\mu$ and $\nu$.
	\begin{definition}\cite[Definition 2.21]{KG20}
		A Keisler measure $\mu$ is \emph{Borel-definable over} $A$ if it is $A$-invariant and for every formula $\psi(x,y)\in L$ the map
		\begin{align*}
			F_{\mu,A}^\psi:S_y(A)\to[0,1]\textrm{, }\tpp(b/A)\mapsto\mu(\psi(x,b))
		\end{align*}
		is Borel-measurable, i.e. for every open set $U\subset[0,1]$, the set $(F_{\mu,A}^\psi)^{-1}(U)$ is Borel.
	\end{definition}
	
	\begin{remark}
		If $\mu$ is Borel-definable over $A$, then it is Borel-definable over any model $N$ containing $A$, and in that case, for every formula $\psi(x,y)\in L(N)$ we have that $F_{\mu,N}^\psi$ is a Borel-measurable map \cite[Proposition 2.22]{KG20}.
	\end{remark}
	
	Assume that the measure $\mu$ is Borel-definable over $\emptyset$. In particular, for every model $N$ and every $L(N)$-formula $\psi(x,y)$, the function $F_{\mu,N}^\psi$ is $\nu\vert_N$-measurable, and we can define the global measure $\mu\otimes\nu$ of definable sets in the variables $x$ and $y$ as follows:
	\begin{align*}
		\mu\otimes\nu(\psi(x,y))=\int_{q\in S_y(N)}F_{\mu,N}^\psi(q)\mathrm{d}\nu_y\vert_N=\int_{q\in S_y(N)}\mu(\psi(x,b))\mathrm{d}\nu_y\vert_N,
	\end{align*}
	where $b$ is some (any) realization of $q(y)$.
	
	\begin{remark}
		Observe that:
		\begin{enumerate}
			\item the definition of $\mu\otimes\nu$ does not depend on the choice of $N$ \cite[Section 7.4]{PS14};
			\item the above construction is not commutative in general, i.e. $\mu\otimes\nu\neq\nu\otimes\mu$;
			\item the measure $\mu\otimes\nu$ extends the product measure $\mu\times\nu$.
		\end{enumerate}
	\end{remark}
	
	\begin{theorem}\label{regularmeasure}
		Let $\mu=\mu_x$ and $\nu=\nu_y$ be two $\emptyset$-invariant Keisler measures, and assume that $\mu$ is Borel-definable over $\emptyset$. If $\varphi(x,y)$ is almost sure stable with respect to $\mu$ and $\nu$, then for every model $M$ and every $\eps>0$ there are formulas $\phi_1(x),\ldots,\phi_n(x)$ and $\theta_1(y),\ldots,\theta_m(y)$ such that
		\begin{enumerate*}
			\item[$(1)$] each $\phi_i$ is a Boolean combination of formulas of the form $\varphi(x,b)$ and $\neg\varphi(x,b)$ where the tuples $b$ are $\nu$-wide over $M$,
			\item[$(2)$] each $\theta_j$ is a Boolean combination of formulas of the form $\varphi(a,y)$ and $\neg\varphi(a,y)$ where the tuples $a$ are $\mu$-wide over $M$,
			\item[$(3)$] $\mu(\phi_1\lor\ldots\lor\phi_n)=1$ and $\nu(\theta_1\lor\ldots\lor\theta_m)=1$,
		\end{enumerate*}
		and such that for every $1\leq i\leq n$ and $1\leq j\leq m$, either
		\begin{align*}
			\mu\otimes\nu(\varphi(x,y)\land \phi_i(x)\land\theta_j(y))\geq(1-\eps)^2\cdot \mu\otimes\nu(\phi_i(x)\land\theta_j(y)) 
		\end{align*}
		or
		\begin{align*}
			\mu\otimes\nu(\neg\varphi(x,y)\land \phi_i(x)\land\theta_j(y))\geq(1-\eps)^2\cdot \mu\otimes\nu(\phi_i(x)\land\theta_j(y)).
		\end{align*}
	\end{theorem}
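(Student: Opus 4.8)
\textbf{Proof plan for Theorem \ref{regularmeasure}.}
The plan is to transport the combinatorial regularity statement from the two type spaces $\gl{X}_{\varphi,M}$ and $\gl{X}_{\varphi^*,M}$ into the $\mu\otimes\nu$-measure of definable sets. First I would apply Fact \ref{partitions} twice. Since $\varphi(x,y)$ is $\ideal$-stable, so is $\varphi^*(y,x)$ by Lemma \ref{comb}, and by Theorem \ref{finiteCB} both $\gl{X}_{\varphi,M}$ and $\gl{X}_{\varphi^*,M}$ are compact, Hausdorff, $0$-dimensional and of finite Cantor-Bendixson rank. Equipping them with the induced measures $\mu_{\varphi,M}$ and $\nu_{\varphi^*,M}$ from Fact \ref{extension}, Fact \ref{partitions} yields a partition of $\gl{X}_{\varphi,M}$ into clopen sets $[\phi_1]_M,\ldots,[\phi_n]_M$, each containing a point $\gl{p}_i$ of positive $\mu_{\varphi,M}$-measure with $\mu_{\varphi,M}([\phi_i]_M\setminus\{\gl{p}_i\})\leq\eta\cdot\mu_{\varphi,M}(\{\gl{p}_i\})$, for a small $\eta$ to be chosen in terms of $\eps$; and similarly a partition $[\theta_1]_M,\ldots,[\theta_m]_M$ of $\gl{X}_{\varphi^*,M}$ with witnesses $\gl{q}_j$. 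Here the $\phi_i$ are (by Definition \ref{XM}) Boolean combinations of instances $\varphi(x,b)$, $\neg\varphi(x,b)$ with $b$ $\nu$-wide over $M$, and the $\theta_j$ are Boolean combinations of instances $\varphi(a,y)$, $\neg\varphi(a,y)$ with $a$ $\mu$-wide over $M$, so conditions $(1)$ and $(2)$ hold; and since these are partitions, $\mu_{\varphi,M}(\gl{X}_{\varphi,M})=1$ and $\nu_{\varphi^*,M}(\gl{X}_{\varphi^*,M})=1$ give $\mu(\phi_1\lor\ldots\lor\phi_n)=1$ and $\nu(\theta_1\lor\ldots\lor\theta_m)=1$ via Remark \ref{mscompatible}, which is condition $(3)$.

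Next I would fix a pair $(i,j)$. The key step is to show that the atom $\{\gl{p}_i\}$ of $\mu_{\varphi,M}$ and the atom $\{\gl{q}_j\}$ of $\nu_{\varphi^*,M}$ have a well-defined ``homogeneous colour'': either $\varphi(a,b)$ holds for all $a$ realizing (a wide extension of) $\gl{p}_i$ and all $b$ $\nu$-wide over $M$ with $\tpp(b/M)$ in the fibre over $\gl{q}_j$, or $\neg\varphi(a,b)$ always holds. This is exactly Corollary \ref{harr} (Harrington for wide types): choose wide global extensions $\gl{p}\supset\gl{p}_i$ and $\gl{q}\supset\gl{q}_j$, and then for $c$ with $\tpp(c/M)\cup\gl{p}$ wide and $d$ with $\tpp(d/M)\cup\gl{q}$ wide we have $\varphi(x,d)\in\gl{p}\Leftrightarrow\varphi(c,y)\in\gl{q}$, so the value of $\varphi(c,d)$ is constant — call it the colour $\epsilon_{ij}\in\{0,1\}$ — across the ``large'' part of the product. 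More precisely, for $a$ realizing $\gl{p}_i$ and $\mu$-wide over $M$, and $b$ realizing $\gl{q}_j$ and $\nu$-wide over $M$ and mutually wide, we get $\varphi(x,b)\in\gl{p}\Leftrightarrow\deftp{\gl{p},M}{\varphi(b)}$ holds $\Leftrightarrow \varphi^*(x,a)\in\gl{q}$, using Theorem \ref{def2}/Remark \ref{resultsforXM}. So on the set where $\tpp(a/M)=\gl{p}_i$ and $\tpp(b/M)=\gl{q}_j$ the formula $\varphi(x,y)$ (or its negation) holds identically.

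The final step is a double-integral estimate. Writing the conditional measure $\mu\otimes\nu(\,\cdot\mid\phi_i(x)\land\theta_j(y))$ as an iterated integral over $S_y(N)$ then $S_x(\MB)$ and pushing forward along $r_{\varphi,M}$ and $r_{\varphi^*,M}$, I split the mass of $[\phi_i]_M\times[\theta_j]_M$ into the ``good'' part $\{\gl{p}_i\}\times\{\gl{q}_j\}$, on which $\varphi^{\epsilon_{ij}}(x,y)$ holds everywhere by the previous paragraph, and a ``bad'' remainder of relative measure at most $(1-(1-\eta)^2)\leq 2\eta$ coming from the two error terms $[\phi_i]_M\setminus\{\gl{p}_i\}$ and $[\theta_j]_M\setminus\{\gl{q}_j\}$. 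Hence
\[
\mu\otimes\nu\bigl(\varphi^{\epsilon_{ij}}(x,y)\land\phi_i(x)\land\theta_j(y)\bigr)\geq(1-\eta)^2\cdot\mu\otimes\nu\bigl(\phi_i(x)\land\theta_j(y)\bigr),
\]
so choosing $\eta=\eps$ (or any $\eta$ with $(1-\eta)^2\geq(1-\eps)^2$, e.g.\ $\eta=\eps$) gives the desired dichotomy. The main obstacle I anticipate is the bookkeeping in this last step: one must be careful that $\mu_{\varphi,M}$ and $\nu_{\varphi^*,M}$ are genuinely the pushforwards of $\mu$ and $\nu$ (Remark \ref{mscompatible}), that the atoms $\{\gl{p}_i\}$, $\{\gl{q}_j\}$ pull back to the sets where $\tpp(x/M)\vert_\varphi$, $\tpp(y/M)\vert_{\varphi^*}$ have the prescribed restriction, and — the genuinely delicate point — that for $\mu\otimes\nu$-almost every $(a,b)$ in that pullback the two tuples can be taken mutually $\nu$-wide / $\mu$-wide over $M$, so that Corollary \ref{harr} and Remark \ref{resultsforXM} apply; here one uses that $\mu\otimes\nu$ concentrates on pairs with $b$ $\nu$-wide over $M$ (by Borel-definability of $\mu$, $F_{\mu,M}^{\varphi}$ only sees wide $b$ up to $\nu$-null sets) and, inside the integral over $S_x(\MB)$, that $\mu$ concentrates on $a$ that are $\mu$-wide over $M,b$.
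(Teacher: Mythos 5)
Your plan is correct and follows essentially the same route as the paper: Fact \ref{partitions} applied to both $\gl{X}_{\varphi,M}$ and $\gl{X}_{\varphi^*,M}$, homogeneity of the pair of atoms $\gl{p}_i,\gl{q}_j$ via the definability results of Section \ref{Istab}, and an iterated-integral lower bound $\mu\otimes\nu(\varphi^{\epsilon_{ij}}\land\phi_i\land\theta_j)\geq\mu_{\varphi,M}(\{\gl{p}_i\})\cdot\nu_{\varphi^*,M}(\{\gl{q}_j\})\geq(1-\eps)^2\cdot\mu\otimes\nu(\phi_i\land\theta_j)$. The only real difference is that the paper sidesteps the ``almost every mutually wide pair'' issue you flag as delicate: it fixes a single realization $c$ of a wide completion of $\gl{p}_i$ over $M$, works in $S_y(N)$ for a saturated $N\supseteq M,c$, and uses Lemma \ref{def1} together with the fact that the set $F$ of types wide over $M,c$ has full $\nu\vert_N$-measure, so the inner integrand $\mu(\varphi(x,b)\land\phi_i(x))$ is bounded below by $\mu_{\varphi,M}(\{\gl{p}_i\})$ on a set of $\nu\vert_N$-measure at least $\nu_{\varphi^*,M}(\{\gl{q}_j\})$ and no two-sided concentration argument is needed.
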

	
	\begin{proof}
		Let $\lambda=\mu\otimes\nu$ and let $\mu_{\varphi,M}$ and $\nu_{\varphi^*,M}$ be the extensions of $\mu$ and $\nu$ to Borel regular probability measures in the spaces $\gl{X}_{\varphi,M}$ and $\gl{X}_{\varphi^*,M}$. Given $\eps>0$, consider the partitions
		\begin{align*}
			\gl{X}_{\varphi,M}&=U_1\cup\ldots\cup U_n,\\
			\gl{X}_{\varphi^*,M}&=V_1\cup\ldots\cup V_m,
		\end{align*}
		into clopen sets given by Fact \ref{partitions}. The formulas corresponding to the clopen sets of the above partitions clearly satisfy $(1)$, $(2)$ and $(3)$. Now fix $i$ and $j$, and let $U_i=[\phi_i(x)]_M$ and $V_j=[\theta_j(y)]_M$, together with the associated types $\gl{p}_i\in U_i$ and $\gl{q}_j\in V_j$ given by the same fact. Let $p(x)\in S_x(M)$ be any complete $\mu$-wide type such that $p(x)\cup \gl{p}_i(x)$ is $\mu$-wide, and let $c$ be a realization of $p(x)$. Finally, let $N$ be an $\abs{M}^+$-saturated model containing $M,c$ and the parameters of $\phi_i$ and $\theta_j$. Henceforth, we work in the space $S_y(N)$.
		
		Recall that the measure $\mu_{\varphi,M}$ is the pushforward measure of $\mu$ via the map $r_{\varphi,M}:S_x(\MB)\to\gl{X}_{\varphi,M}$ defined in Remark \ref{mscompatible}. Similarly, the measure $\nu_{\varphi^*,M}$ is the pushforward of $\nu$ via the corresponding restriction map $r_{\varphi^*,M}:S_y(\MB)\to\gl{X}_{\varphi^*,M}$.
		
		Suppose first that $\varphi(c,y)\in \gl{q}_j$, so that $\theta_j(y)\land\varphi(c,y)\in \gl{q}_j$, and define the following closed subset of $S_y(N)$,
		\begin{align*}
			F=\{q\in S_y(N):q(y)\vert_{M,c}\textrm{ is $\nu$-wide}\}.
		\end{align*}
		If $\tpp(b/N)\in[\varphi(c,y)]\cap F$ then $\varphi(x,b)\in\gl{p}_i$ by Lemma \ref{def1}, and so we have that  $\varphi(x,b)\land\phi_i(x)\in \gl{p}_i$. Consequently, $\mu(\varphi(x,b)\land\phi_i(x))\geq\mu_{\varphi,M}(\gl{p}_i)$. Now since $\nu\vert_N(F)=1$, we obtain that $\nu\vert_N([\theta_j(y)\land\varphi(c,y)]\cap F)\geq\nu_{\varphi^*,M}(\gl{q}_j)$, and it follows that
		\begin{align*}
			\lambda(\varphi(x,y)\land\phi_i(x)\land\theta_j(y))&=\int_{\tpp(b/N)\in[\theta_j(y)]}\mu(\varphi(x,b)\land\phi_i(x))\mathrm{d}\nu_y\vert_N\\
			&\geq\int_{\tpp(b/N)\in [\theta_j(y)\land\varphi(c,y)]\cap F}\mu(\varphi(x,b)\land\phi_i(x))\mathrm{d}\nu_y\vert_N\\
			&\geq\mu_{\varphi,M}(\gl{p}_i)\cdot\nu\vert_N([\theta_j(y)\land\varphi(c,y)]\cap F)\\
			&\geq\mu_{\varphi,M}(\gl{p}_i)\cdot\nu_{\varphi^*,M}(\gl{q}_j)\\
			&\geq(1-\eps)^2\cdot\lambda(\phi_i(x)\land\theta_j(y)),
		\end{align*}
		as required. This finishes the first case. If $\varphi(c,y)\notin q_j(y)$, then $\neg\varphi(c,y)\in q_j(y)$, and proceeding similarly we obtain the second case. Hence, the result.
	\end{proof}
	
	As a corollary, we answer a question of \cite[Page 14]{AMP242} by giving a model-theoretic account of \cite[Theorem 6.13]{TW21} (see also \cite{EFR86}).
	
	\begin{cor}
		Under the assumptions of Theorem \ref{regularmeasure}, for every $\eps>0$ there exists a stable formula $\psi(x,y)$ such that
		\begin{align*}
			\mu\otimes\nu(\varphi(x,y)\triangle\psi(x,y))\leq \eps.
		\end{align*}
	\end{cor}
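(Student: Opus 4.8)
The plan is to build the stable formula $\psi(x,y)$ directly from the data produced by Theorem \ref{regularmeasure}. First I would fix $\eps>0$, pick a model $M$, and apply Theorem \ref{regularmeasure} with a suitably small $\eps^\prime$ (say $\eps^\prime$ chosen so that $2\eps^\prime\cdot(1+\text{stuff})\leq\eps$; the exact bookkeeping I would postpone) to obtain formulas $\phi_1(x),\ldots,\phi_n(x)$ and $\theta_1(y),\ldots,\theta_m(y)$ satisfying conditions $(1)$, $(2)$, $(3)$ and the homogeneity dichotomy for each pair $(i,j)$. Refining the $\phi_i$'s to be pairwise disjoint (replace $\phi_i$ by $\phi_i\wedge\neg\phi_1\wedge\cdots\wedge\neg\phi_{i-1}$) and likewise the $\theta_j$'s, I may assume the cells $\{[\phi_i]\}$ and $\{[\theta_j]\}$ form partitions modulo $\mu$- and $\nu$-null sets respectively; this does not disturb the inequalities. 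Then let $S\subseteq\{1,\ldots,n\}\times\{1,\ldots,m\}$ be the set of pairs $(i,j)$ for which the first alternative (the $\varphi$-dense one) holds, and define
\begin{align*}
	\psi(x,y)=\bigvee_{(i,j)\in S}\bigl(\phi_i(x)\wedge\theta_j(y)\bigr).
\end{align*}

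Next I would check that $\psi$ is stable. This is the conceptual point: $\psi$ is a finite Boolean combination of formulas $\phi_i(x)$, which involve only the variable $x$ (with parameters), and formulas $\theta_j(y)$, which involve only the variable $y$. A formula all of whose instances $\psi(x,b)$ lie in the (finite) Boolean algebra generated by $\phi_1(x),\ldots,\phi_n(x)$ has only finitely many instances up to logical equivalence, hence cannot code a half-graph of unbounded height, so it is stable. (Concretely: for each $b$, $\psi(x,b)=\bigvee\{\phi_i(x):(i,j)\in S,\ \theta_j(b)\text{ holds}\}$, and there are at most $2^n$ such formulas, so $\psi$ has the order property of ladder at most $2^n$, in particular it is stable.) I would state this as a short self-contained sublemma rather than invoke external machinery.

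Finally I would estimate $\mu\otimes\nu(\varphi\,\triangle\,\psi)$. Split over the cells: since $\mu(\bigvee_i\phi_i)=1$ and $\nu(\bigvee_j\theta_j)=1$, and the cells are disjoint mod null, we get $\mu\otimes\nu(\varphi\triangle\psi)=\sum_{i,j}\mu\otimes\nu\bigl((\varphi\triangle\psi)\wedge\phi_i\wedge\theta_j\bigr)$. On a cell $(i,j)\in S$ we have $\psi\wedge\phi_i\wedge\theta_j\equiv\phi_i\wedge\theta_j$, so the symmetric difference with $\varphi$ on that cell is $\neg\varphi\wedge\phi_i\wedge\theta_j$, which by the first alternative of Theorem \ref{regularmeasure} has measure at most $\bigl(1-(1-\eps^\prime)^2\bigr)\mu\otimes\nu(\phi_i\wedge\theta_j)\leq 2\eps^\prime\,\mu\otimes\nu(\phi_i\wedge\theta_j)$. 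On a cell $(i,j)\notin S$ the second alternative holds and $\psi$ is false there, so the symmetric difference is $\varphi\wedge\phi_i\wedge\theta_j$, again of measure at most $2\eps^\prime\,\mu\otimes\nu(\phi_i\wedge\theta_j)$. Summing over all cells and using $\sum_{i,j}\mu\otimes\nu(\phi_i\wedge\theta_j)=1$ gives $\mu\otimes\nu(\varphi\triangle\psi)\leq 2\eps^\prime\leq\eps$ for $\eps^\prime=\eps/2$.

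The main obstacle, and the only place requiring a little care, is the reduction to disjoint cells and the verification that this reduction preserves the density inequalities; one must check that passing from $\phi_i$ to $\phi_i\wedge\neg\phi_{<i}$ only removes a null set from each cell (because $\mu(\bigvee\phi_i)=1$ forces the pairwise overlaps to be null), so the inequalities of Theorem \ref{regularmeasure} survive verbatim. Everything else is a routine additivity computation and the easy observation that a formula with finitely many instances is stable, so I expect the whole argument to be short.
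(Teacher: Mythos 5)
Your construction is the same as the paper's: apply Theorem \ref{regularmeasure} with a smaller parameter, let $\psi$ be the union of the cells on which the $\varphi$-dense alternative holds, and sum the errors cell by cell (the paper takes $\delta=1-\sqrt{1-\eps}$ so that $(1-\delta)^2=1-\eps$ exactly, where you take $\eps'=\eps/2$ and bound $1-(1-\eps')^2\leq 2\eps'$; both work). Your explicit stability argument for $\psi$ --- finitely many instances $\psi(x,b)$ up to equivalence, hence a bounded ladder --- is a welcome expansion of the paper's bare ``clearly $\psi$ is stable.'' The one flawed point is your justification of disjointness: it is simply false that $\mu(\phi_1\lor\dots\lor\phi_n)=1$ forces the pairwise overlaps $\phi_i\land\phi_{i'}$ to be $\mu$-null (take $\phi_1=\phi_2$ of full measure), and your fallback of replacing $\phi_i$ by $\phi_i\land\neg\phi_{<i}$ would not preserve the density inequalities relative to the \emph{refined} cells if the overlaps were genuinely non-null --- the error on a refined cell would still only be controlled by $\eps'$ times the measure of the \emph{original} cell, and $\sum_i\mu(\phi_i)$ could then be as large as $n$, ruining the final sum. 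The disjointness you need is true, but for a different reason: in the proof of Theorem \ref{regularmeasure} the $\phi_i$ and $\theta_j$ arise from partitions of $\gl{X}_{\varphi,M}$ and $\gl{X}_{\varphi^*,M}$ into \emph{pairwise disjoint} clopen sets, so for $i\neq i'$ the formula $\phi_i\land\phi_{i'}$ belongs to no point of $\gl{X}_{\varphi,M}$, hence lies in $\ideal_x$ and is $\mu$-null (and likewise for the $\theta_j$). With that substitution your argument is exactly the paper's proof.
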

	
	\begin{proof}
		Let $\lambda=\mu\otimes\nu$. Given $\eps>0$, let $\delta=1-\sqrt{1-\eps}$ and let $\phi_1(x),\ldots,\phi_n(x)$ and $\theta_1(y),\ldots,\theta_m(y)$ be the formulas given by Theorem \ref{regularmeasure} for $\delta$. So
		\begin{align*}
			\Gamma&=\{(i,j):\lambda(\varphi(x,y)\land \phi_i(x)\land\theta_j(y))\geq(1-\delta)^2\cdot\lambda(\phi_i(x)\land\theta_j(y))\}\\
			&=\{(i,j):\lambda(\varphi(x,y)\land\phi_i(x)\land\theta_j(y))\geq(1-\eps)\cdot\lambda(\phi_i(x)\land\theta_j(y))\}.
		\end{align*}
		Observe that for $(i,j)\in \Gamma$ we have that
		\begin{align*}
			\lambda(\neg\varphi(x,y)\land \phi_i(x)\land\theta_j(y))\leq\eps\cdot\lambda(\phi_i(x)\land\theta_j(y)),
		\end{align*}
		while by Theorem \ref{regularmeasure}, for $(i,j)\notin \Gamma$ we have
		\begin{align*}
			\lambda(\varphi(x,y)\land \phi_i(x)\land\theta_j(y))\leq\eps\cdot\lambda(\phi_i(x)\land\theta_j(y)).
		\end{align*}
		Now define
		\begin{align*}
			\psi(x,y)=\bigvee_{(i,j)\in\Gamma}\phi_i(x)\land\theta_j(y).
		\end{align*}
		Clearly $\psi(x,y)$ is stable and
		\begin{align*}
			\lambda(\varphi(x,y)\triangle\psi(x,y))&=\sum_{(i,j)\notin J}\lambda(\varphi(x,y)\land\phi_i(x)\land\theta_j(y))\\&+\sum_{(i,j)\in J}\lambda(\neg\varphi(x,y)\land\phi_i(x)\land\theta_j(y))\\
			&\leq\sum_{i=1}^n\sum_{j=1}^m\eps\cdot\lambda(\phi_i(x)\land\theta_j(y))=\eps,
		\end{align*}
		obtaining the desired bound.
	\end{proof}
	
	\section{Groups and stabilizers}\label{groups}
	
	In this section we work in the presence of an ambient definable group $G$. More precisely and for simplicity, assume that $\MB=(G,\cdot,\ldots)$ in such a way that $(G,\cdot)$ is a group. As usual, if $g$ is an element of the group and $\psi(x)$ is a formula, then $g\cdot\psi(x)$ is defined to be the formula $\psi(g\inv\cdot x)$. Hence, for any partial type $\pi(x)$, we define
	\begin{align*}
		g\cdot\pi(x)=\{\psi(g\inv\cdot x):\psi(x)\in\pi(x)\}.
	\end{align*}
	Henceforth, we are interested in the study of relations of the form
	\begin{align*}
		\{(g,h)\in G\times G:h\cdot g\in A\}
	\end{align*}
	to obtain results that will be applied in Section \ref{applications} to the usual Cayley relation.
	
	Therefore, let $\ideal_x$ and $\ideal_y$ be two $\emptyset$-invariant $S_1$-ideals that are formed by the same formulas up to relabelling the variables $x$ and $y$ that are, in addition, invariant under left and right translations by elements from $G$. As in Section \ref{Istab}, denote by $\ideal$ the ordered pair $(\ideal_x,\ideal_y)$, and consider an $\ideal$-stable formula $\varphi(x,y)=A(y\cdot x)$, where $A$ is an $\emptyset$-definable subset of $G$. In this particular setting, the formula $g\cdot\varphi(x,b)$ is equivalent to $\varphi(x,b\cdot g^{-1})$ for any parameter $b$ and every $g\in G$. We abbreviate this by saying that $\varphi(x,y)$ is \emph{equivariant}.
	
	Since $\ideal_x$ and $\ideal_y$ are now formed essentially by the same formulas, we will not say that a tuple or a type is $\ideal_x$-wide or $\ideal_y$-wide, but just wide. Moreover, note that there is no ambiguity in writing $B\notin\ideal$ to denote that a set $B$ that is either $L_x(\MB)$-definable or $L_y(\MB)$-definable is wide. We assume that $G\notin\ideal$.

	Let $M$ be an elementary substructure. Our first objective is to show that in the presence of the group $G$, the space $\gl{X}_M=\gl{X}_{\varphi,M}$ is finite.
	
	\begin{lemma}\label{actauto}
		Let $N$ be an $\abs{M}^+$-saturated elementary extension of $M$. The group $G(N)$ acts by homeomorphisms on the space $\gl{X}_M$.
	\end{lemma}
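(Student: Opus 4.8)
The plan is to exhibit, for each $g\in G(N)$, a bijection $\sigma_g\colon\gl{X}_M\to\gl{X}_M$ induced by the translation action $\gl{p}\mapsto g\cdot\gl{p}$, to check that this gives a group action, and finally to verify that each $\sigma_g$ is a homeomorphism. The first point to settle is that translation by $g$ sends $\ideal_y$-wide tuples over $M$ to $\ideal_y$-wide tuples over $M$: since $\ideal_y$ is invariant under left and right translations by elements of $G$ and $G(N)\subseteq G(\MB)$, if $b$ is $\ideal_y$-wide over $M$ and $g\in G(N)$, then $b\cdot g^{-1}$ is again wide over $M,g\supseteq M$, hence over $M$. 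Because $\varphi$ is equivariant, $g\cdot\varphi(x,b)$ is equivalent to $\varphi(x,b\cdot g^{-1})$; thus applying $\gl{p}\mapsto g\cdot\gl{p}$ to a maximal consistent set of Boolean combinations of formulas $\varphi(x,b)^{\eta}$ with $b\in\wide{M}$ produces another such maximal consistent set, i.e. $g\cdot\gl{p}\in\gl{X}_M$. So $\sigma_g(\gl{p}):=g\cdot\gl{p}$ is well-defined on $\gl{X}_M$.

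Next I would check that $g\mapsto\sigma_g$ is a left action: $\sigma_e=\mathrm{id}$ is immediate from $e\cdot\psi(x)=\psi(x)$, and $\sigma_g\circ\sigma_h=\sigma_{gh}$ follows from $(gh)\cdot\psi(x)=\psi((gh)^{-1}\cdot x)=\psi(h^{-1}\cdot(g^{-1}\cdot x))=g\cdot(h\cdot\psi)(x)$, applied formulawise to the types. In particular each $\sigma_g$ is a bijection with inverse $\sigma_{g^{-1}}$. It remains to see that $\sigma_g$ is continuous (equivalently, being a bijection with continuous inverse $\sigma_{g^{-1}}$, a homeomorphism). The basic clopen sets of $\gl{X}_M$ are the $[\psi(x)]_M$ with $\psi(x)$ a Boolean combination of formulas $\varphi(x,b)^{\eta}$, $b\in\wide{M}$. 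One computes
\begin{align*}
\sigma_g^{-1}\bigl([\psi(x)]_M\bigr)=\{\gl{p}\in\gl{X}_M:\psi\in g\cdot\gl{p}\}=\{\gl{p}\in\gl{X}_M:g^{-1}\cdot\psi\in\gl{p}\}=[g^{-1}\cdot\psi(x)]_M,
\end{align*}
and $g^{-1}\cdot\psi$ is again, by equivariance and the wideness remark above, a Boolean combination of formulas $\varphi(x,b')^{\eta}$ with $b'\in\wide{M}$. Hence the preimage of a basic clopen set is a basic clopen set, so $\sigma_g$ is continuous; by symmetry so is $\sigma_{g^{-1}}=\sigma_g^{-1}$, and $\sigma_g$ is a homeomorphism.

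The only genuinely delicate point in the above is the very first one — that translation preserves the class of $\ideal_y$-wide tuples over $M$ and that equivariance is available in exactly the form needed, namely $g\cdot\varphi(x,b)\equiv\varphi(x,b\cdot g^{-1})$ for \emph{all} $g\in G(N)$ and all parameters $b$; everything else is a routine unwinding of definitions. I would therefore make sure to state explicitly that the hypotheses ``$\ideal_x,\ideal_y$ invariant under left and right translations by $G$'' and ``$\varphi$ equivariant'' are used precisely here, and note that $N$ only enters through the (harmless) requirement that its elements still translate wide tuples to wide tuples, which holds since $\ideal_y$ is an ideal of formulas over $\MB$ and $G(N)\subseteq G(\MB)$.
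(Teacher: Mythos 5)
Your argument breaks down at exactly the point you flag as delicate, and the break is fatal to the direct approach. You claim that if $b$ is $\ideal_y$-wide over $M$ and $g\in G(N)$, then $b\cdot g^{-1}$ is wide over $M,g$ and hence over $M$. Translation invariance of $\ideal_y$ gives only that $b\cdot g^{-1}$ is wide over $M,g$ \emph{if and only if} $b$ is wide over $M,g$ --- the translation must be absorbed into the parameter set --- and your hypothesis is merely that $b$ is wide over $M$, which is strictly weaker since $g$ need not lie in $M$ (the whole point of $N$ being a proper $\abs{M}^+$-saturated extension). Concretely, in the Keisler-measure setting with an atomless measure, take $b=g$ with $\tpp(g/M)$ wide: then $b\cdot g^{-1}=e$ satisfies the formula $y=e$, which lies in $\ideal_y$ and has no parameters at all, so $b\cdot g^{-1}$ is not wide over $M$. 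Consequently $g\cdot\gl{p}$ need not be an element of $\gl{X}_M$ (its instances $\varphi(x,b\cdot g^{-1})$ may have non-wide parameters, and it need not decide $\varphi(x,c)$ for every $c\in\wide{M}$, since $c\cdot g$ may fail to be wide over $M$), so your map $\sigma_g$ is not well defined. A telltale sign is that your proof never uses the $\ideal$-stability of $\varphi$, which is a standing hypothesis in this section and is genuinely needed.

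The paper circumvents this by first letting $G(N)$ act by translation on $\gl{X}_N$ --- there the wideness argument does go through, because $g\in G(N)$ means the translated formulas still have parameters in $N$ --- where one checks $g^{-1}([\varphi(x,b)]_N)=[\varphi(x,b\cdot g)]_N$ and concludes by compactness and Hausdorffness that each translation is a homeomorphism. The action is then transported to $\gl{X}_M$ by conjugating with the restriction map $\rho_{M,N}\colon\gl{X}_M\to\gl{X}_N$, which is a homeomorphism by Remark \ref{spaceshomeo}; that remark rests on Lemma \ref{samecl} and hence on the invariance lemma for $\ideal$-stable formulas. This detour through $\gl{X}_N$ and the stability-dependent identification of the two spaces is the missing ingredient in your proposal.
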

	
	\begin{proof}
		Let us see first that $G(N)$ acts on the space $\gl{X}_N$ by translations, and that such action is by homeomorphisms.
		
		Observe that if $b$ is wide over $N$ and $g\in G(N)$, then $b\cdot g$ is wide over $N$. This, together with the equivariance of $\varphi(x,y)$ and the invariance of the ideals $\ideal_x$ and $\ideal_y$ under translations by elements from $G$, yields that for every $g\in G(N)$ the map $g:\gl{X}_N\to\gl{X}_N$ given by $\gl{p}\mapsto g\cdot\gl{p}$ is well-defined. Moreover, this map is clearly bijective and continuous, since for every $b$ that is wide over $N$,
		\begin{align*}
			g\inv\left([\varphi(x,b)]_N\right)=[\varphi(x,b\cdot g)]_N.
		\end{align*}
		The space $\gl{X}_N$ is compact and Hausdorff, so $g$ is an homeomorphism, as we wanted to prove.
		
		Now, by Remark \ref{spaceshomeo} we have that the map $\rho_{M,N}:\gl{X}_M\to\gl{X}_N$ is an homeomorphism. Hence, the action of $G(N)$ on $\gl{X}_M$ defined as
		\begin{align*}
			g\ast\gl{p}=\rho_{M,N}\inv(g\cdot \rho_{M,N}(\gl{p}))
		\end{align*}
		for every $\gl{p}\in\gl{X}_M$ is clearly an action by homeomorphisms.
	\end{proof}
	
	\begin{remark}
		The action of $G(N)$ on $\gl{X}_M$ of the above lemma is simply
		\begin{align*}
			g\ast\gl{p}=(g\cdot\gl{q})\vert_{\wide{M}},
		\end{align*}
		for every $g\in G(N)$, every $\gl{p}\in\gl{X}_M$ and any wide global $\varphi$-type $\gl{q}$ such that $\gl{q}\vert_{\wide{M}}=\gl{p}$.
	\end{remark}
	
	Recall that a definable subset $B$ of $G$ is \emph{generic} if there exists a finite subset $S$ of $G$ such that $G=S\cdot B$ \cite[Definition 5.14]{HP94}. If $\varphi(x,y)$ is stable and $\mu$ is a Keisler measure which is invariant under left translations, then a $\varphi$-formula is generic if and only if it has positive $\mu$-measure \cite[Theorem 2.3]{CPT18}. We now aim to generalize the notion of generic formulas and obtain an analogous result.
	
	\begin{definition}
		A definable subset $B$ of $G$ is $\ideal$\emph{-left-generic} if there exists a finite subset $S$ of $G$ such that $G\setminus (S\cdot B)\in\ideal$. Similarly, the set $B$ is $\ideal$\emph{-right-generic} if there exists a finite subset $S$ of $G$ such that $G\setminus(B\cdot S)\in\ideal$.
	\end{definition}
	
	\begin{definition}
		A \emph{basic} $\varphi$\emph{-definable set}  is a set defined by a formula of the form
		\begin{align*}
			\bigwedge_{i=1}^m\varphi(x,c_i)\land\bigwedge_{j=1}^n\neg\varphi(x,d_j)
		\end{align*}
		for some parameters $c=(c_1,\ldots,c_m)$ and $d=(d_1,\ldots,d_n)$.
	\end{definition}

	\begin{prop}\label{generics}
		Let $B$ be a basic $\varphi$-definable set. Then $B$ is $\ideal$-left-generic if and only if $B\notin\ideal$.
	\end{prop}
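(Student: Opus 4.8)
The plan is to prove the two implications separately. The direction from $\ideal$-left-generic to $B\notin\ideal$ is immediate: if $G\setminus(S\cdot B)\in\ideal$ for a finite set $S=\{s_1,\dots,s_r\}$ then, since $\ideal$ is closed under subsets and finite unions and $G\notin\ideal$, we must have $S\cdot B=s_1B\cup\dots\cup s_rB\notin\ideal$; hence $s_iB\notin\ideal$ for some $i$, and since $\ideal$ is invariant under left translations, $B=s_i^{-1}\cdot(s_iB)\notin\ideal$.

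For the converse I would assume that $B=\beta(\MB)$ is wide, with $\beta(x)=\bigwedge_{i=1}^m\varphi(x,c_i)\wedge\bigwedge_{j=1}^n\neg\varphi(x,d_j)$, but that $B$ is not $\ideal$-left-generic, and derive a contradiction by building an arbitrarily long half-graph for $\varphi$ that is independent over a model, which is impossible by Corollary \ref{permutation}. First I would record an elementary reduction. Since $\varphi(x,b)(\MB)$ and $\neg\varphi(x,b)(\MB)$ are left translates of $A$ and of $G\setminus A$, whether $\varphi(x,b)$ (respectively $\neg\varphi(x,b)$) is wide does not depend on $b$ and is equivalent to $A$ (respectively $G\setminus A$) being wide. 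Moreover, if $A$ is not wide, then $m=0$ (otherwise $B\subseteq\varphi(x,c_1)(\MB)$ would force $A$ wide), and then $B$ is a finite intersection of sets whose complements lie in $\ideal$, so $G\setminus B\in\ideal$ and $B$ is $\ideal$-left-generic (take $S=\{e\}$); symmetrically if $G\setminus A$ is not wide. Thus I may assume that both $A$ and $G\setminus A$ are wide, so that all of $\varphi(x,b)$, $\neg\varphi(x,b)$, $\varphi(a,y)$ and $\neg\varphi(a,y)$ are wide for every value of the free parameter.

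Next, fixing a model $M$ containing $c_1,\dots,c_m,d_1,\dots,d_n$, I would construct recursively group elements $g_1,g_2,\dots$ and elements $a_1,a_2,\dots$ of $G$ so that the whole tuple is $M$-independent with respect to a suitable ordering of its entries (as in the proof of Theorem \ref{finiteCB}) and realizes the staircase $g_\ell^{-1}a_i\in B\Leftrightarrow i\le\ell$: points that must lie inside a translate of $B$ are available because $B$ is wide, points that must avoid finitely many translates of $B$ are available because $B$ is not $\ideal$-left-generic (so every $G\setminus(S\cdot B)$ is wide), and Lemma \ref{wideinter} is invoked at each step to keep the partial types from which the new points are chosen wide after their parameter sets are enlarged. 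By the equivariance of $\varphi$, the relation $g_\ell^{-1}a_i\in B$ is a conjunction, over the $m+n$ literals of $\beta$, of instances of $\varphi$ or $\neg\varphi$ in the variable $a_i$ whose parameters are translates of the $c_p$ and of the $d_q$. Hence, running the construction to a sufficiently large length $N$ and applying Ramsey's theorem, I would pass to a sub-configuration on which a single such literal already witnesses a half-graph of height $k+1$, still $M$-independent up to a permutation of its entries; since $\neg\varphi$ is $\ideal$-stable by Lemma \ref{comb}, Corollary \ref{permutation} with ladder $k$ then delivers the contradiction.

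The hard part will be the recursive construction: one has to keep the growing tuple $(g_\ell,a_i)$ independent over $M$ — which is precisely where Lemma \ref{wideinter} enters, ensuring the relevant partial types remain wide as further parameters are adjoined — while arranging the staircase for the \emph{Boolean combination} $\beta$, not merely for a single instance of $\varphi$, and this Boolean structure is what later forces the appeal to Ramsey's theorem. The genuinely delicate point is scheduling the choices so that each new $a_i$ can be taken simultaneously to be a sufficiently generic realization of a fixed wide type and to lie on the prescribed side of every translate of $B$ already used (and so that the literal parameters eventually extracted are wide over the right sets); everything else — tracking equivariance and recording over which sets each point is wide — is routine bookkeeping.
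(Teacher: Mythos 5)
Your overall strategy is the same as the paper's: the forward direction by the ideal axioms plus translation invariance, and the converse by building an arbitrarily long $M$-independent staircase for the relation ``lies in a translate of $B$,'' then using Ramsey to extract a single literal of $\beta$ witnessing a half-graph for $\varphi$ (after translating the parameters by $c_p$ or $d_q$, which preserves wideness), contradicting Corollary \ref{permutation}. The preliminary reduction on the wideness of $A$ and $G\setminus A$ is correct but unnecessary: the construction only ever uses wideness of $B$ itself (through its right translates), not of the individual literals.

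The one genuine gap is exactly the point you flag as ``delicate'' and then leave unresolved. To apply Lemma \ref{wideinter} on the $g_\ell$-side you need the tuple $(a_1,\ldots,a_{k+1})$ to consist of realizations of \emph{one and the same} type over the base model, while each new $a_{k+1}$ must also avoid the $k$ translates of $B$ determined by $g_1,\ldots,g_k$ and be wide over all previously chosen points. The observation that $G\setminus(S\cdot B)\notin\ideal$ for every finite $S$ is not by itself enough to schedule this: a point chosen wide in $G\setminus(S\cdot B)$ need not have the same type over $M$ as $a_1$. The missing idea is a compactness argument in the space $S_x^{\mathrm{w}}(\MB)$ of complete wide global types: since no finite union $\bigcup_{g\in S}[g\cdot B]$ covers it, there is a single complete wide global type $\gl{q}$ with $g\cdot B\notin\gl{q}$ for \emph{every} $g\in G$; taking each $a_{k+1}$ to realize $\gl{q}$ restricted to $M$ together with all previously chosen points makes both requirements (fixed type over $M$, avoidance of all relevant translates) automatic. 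Without this step the recursion does not close. A second, smaller issue is your choice of staircase $g_\ell^{-1}a_i\in B$: for fixed $a_i$ the solution set in $g_\ell$ is $a_i\cdot B^{-1}$, and wideness of $B^{-1}$ does not follow from the hypotheses (the ideals are assumed invariant under left and right translations, not under inversion). Use the relation $y\cdot x\in B$ instead, so that the condition on the new $y$-point is membership in the right translates $B\cdot a_i^{-1}$, which are wide.
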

	
	\begin{proof}
		Assume first that $B$ is $\ideal$-left-generic. So, there exists a finite subset $S$ of $G$ such that $G\setminus (S\cdot B)\in\ideal$. If $B\in\ideal$ then $S\cdot B\in\ideal$, so $G=G\setminus(S\cdot B)\cup(S\cdot B)\in\ideal$, which is a contradiction. Therefore, $B\notin \ideal$.
		
		For the other direction of the statement, assume that the set $B$ is defined by the formula
		\begin{align*}
			\bigwedge_{i=1}^m\varphi(x,c_i)\land\bigwedge_{j=1}^n\neg\varphi(x,d_j)
		\end{align*}
		for some $c=(c_1,\ldots,c_m)$ and $d=(d_1,\ldots,d_n)$. Let $\psi(x,y)=y\cdot x\in B$ and let $M^\prime$ be a model containing $c$ and $d$. Then:
		
		\begin{claim*}
			There exists some $k\in\NB$ such that no tuple $(a_1,b_1,\ldots,a_k,b_k)$ that is $M^\prime$-independent satisfies $\psi(a_i,b_j)$ if and only if $i\leq j$.
		\end{claim*}
		
		\begin{claimproof*}
			Let $r\in\NB$ be such that both $\varphi(x,y)$ and $\neg\varphi(x,y)$ are $\ideal$-stable of ladder $r$. Now assume, for the sake of contradiction, that there are sequences as in the statement for arbitrary length $k$. Then, if $k$ is big enough, we know by Ramsey's theorem that there exists an $M^\prime$-independent tuple $(a_1,b_1,\ldots,a_r,b_r)$, and some $p\in\{1,\ldots,m\}$ or some $q\in\{1,\ldots,n\}$, such that either $\varphi(b_j\cdot a_i,c_p)$ holds if and only if $i\leq j$, or $\neg\varphi(b_j\cdot a_i,d_q)$ holds if and only if $i\leq j$. Both cases are analogous, so we may assume that we are in the first one. For every $j=1,\ldots,r$, define $b_j^\prime=c_p\cdot b_j$. Then $(a_1,b_1^\prime,\ldots,a_r,b_r^\prime)$ is an $M^\prime$-independent tuple such that
			\begin{align*}
				\varphi(a_i,b_j^\prime)\Leftrightarrow\varphi(a_i,c_p\cdot b_j)\Leftrightarrow\varphi(b_j\cdot a_i,c_p)\Leftrightarrow i\leq j,
			\end{align*}
			which is a contradiction.
		\end{claimproof*}
		
		Finally, suppose that $B\notin\ideal$ but it is not $\ideal$-left-generic. Then for every finite subset $S$ of $G$ we have that $G\setminus (S\cdot B)\notin \ideal$, so there exists a wide global type $\gl{q}_S$ such that $G\setminus (S\cdot B)\in\gl{q}_S$. Equivalently $S\cdot B\notin\gl{q}_S$, and in terms of the compact space $S_x^{\textrm{w}}(\MB)$ formed by the complete wide global types, we have that $\gl{q}_S\notin \bigcup_{g\in S}[g\cdot B]$. By compactness,
		\begin{align*}
			S_x^{\textrm{w}}(\MB)\not\subset\bigcup_{g\in G}[g\cdot B],
		\end{align*}
		and therefore there is a complete wide global type $\gl{q}$ such that $g\cdot B\notin\gl{q}$ for every $g\in G$. Now we will use the type $\gl{q}$ to define inductively a sequence $(a_i,b_i)_{i\geq 1}$ such that each $a_i$ realizes $\tpp(a_1/M^\prime)$, every finite sequence $(a_1,b_1,\ldots,a_k,b_k)$ is $M^\prime$-independent, and $b_j\cdot a_i\in B$ if and only if $i\leq j$.
		
		Assume that we have already constructed $a_1,b_1,\ldots,a_k,b_k$. Let $a_{k+1}$ be a realization of $\gl{q}\vert_{M^\prime,a_{\leq k},b_{\leq k}}$. By the choice of $\gl{q}$, for every $j\leq n$ we have that $b_j\cdot a_{k+1}\notin B$. Now note that in particular $(a_1,\ldots,a_{k+1})$ is an $M^\prime$-independent tuple of realizations of $\tpp(a_1/M^\prime)$. Since $B\cdot a_1^{-1}\notin\ideal$, it follows by Lemma \ref{wideinter} that
		\begin{align*}
			B\cdot a_1^{-1}\cap\ldots\cap B\cdot a_{k+1}^{-1}\notin\ideal,
		\end{align*}
		and there is some $b_{k+1}$ in the above intersection that is wide over $M^\prime,a_{\leq k+1},b_{\leq k}$. This construction clearly contradicts the claim. 
	\end{proof}
	
	\begin{prop}\label{XMfinite}
		The space $\gl{X}_M$ is finite.
	\end{prop}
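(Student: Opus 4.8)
The plan is to combine Theorem \ref{finiteCB}, which tells us that $\gl{X}_M$ has finite Cantor--Bendixson rank, with the group action from Lemma \ref{actauto} to rule out all non-isolated points. Indeed, a compact Hausdorff $0$-dimensional space of finite Cantor--Bendixson rank is finite if and only if its Cantor--Bendixson derivative $\gl{X}_M^{(1)}$ is empty, so it suffices to show that every $\gl{p}\in\gl{X}_M$ is isolated.

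First I would pass to an $\abs{M}^+$-saturated elementary extension $N\succ M$, so that by Remark \ref{spaceshomeo} the map $\rho_{M,N}:\gl{X}_M\to\gl{X}_N$ is a homeomorphism and the Cantor--Bendixson structure is preserved; it therefore suffices to prove $\gl{X}_N$ is finite. Now suppose for contradiction that $\gl{X}_N$ is infinite. Then its last non-empty Cantor--Bendixson derivative $\gl{X}_N^{(m)}$ (with $m$ maximal, which exists by finiteness of the rank) is a non-empty finite set of isolated points of $\gl{X}_N^{(m)}$, while $\gl{X}_N^{(m-1)}$ is infinite. The key point is that the group $G(N)$ acts by homeomorphisms on $\gl{X}_N$, so it permutes each Cantor--Bendixson derivative $\gl{X}_N^{(j)}$; in particular $G(N)$ acts on the finite set $\gl{X}_N^{(m)}$. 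Fix $\gl{p}_0\in\gl{X}_N^{(m)}$ and let $H=\stab(\gl{p}_0)$ be its setwise stabilizer in $G(N)$; since the orbit $G(N)\cdot\gl{p}_0$ is finite, $H$ has finite index in $G(N)$. I would then argue that $H$ corresponds to a $\varphi$-definable (or at least type-definable over $N$) subgroup of finite index: the condition $g\in H$ is equivalent to $g\cdot\gl{p}_0=\gl{p}_0$, i.e. to $\varphi(x,b)\in\gl{p}_0\Leftrightarrow\varphi(x,b\cdot g)\in\gl{p}_0$ for all wide $b$ over $N$, which by the relative definability of Theorem \ref{def2} cuts out a type-definable set. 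The relevant clopen neighbourhood $[\phi(x)]_N$ isolating $\gl{p}_0$ in $\gl{X}_N^{(m)}$ then has the property that its $G(N)$-translates by a finite transversal of $H$ cover a ``generic'' portion of $\gl{X}_N^{(m)}$; but a clopen set meeting $\gl{X}_N^{(m)}$ in a single point, being basic $\varphi$-definable, must by Proposition \ref{generics} be $\ideal$-left-generic (since $\phi(x)\notin\ideal$, as $[\phi(x)]_N$ is non-empty and the associated measure-type point has positive weight in the sense used there), forcing finitely many translates to cover $\gl{X}_N^{(m-1)}$ as well, and hence all of it to lie in $\gl{X}_N^{(m)}$, contradicting that $\gl{X}_N^{(m-1)}\supsetneq\gl{X}_N^{(m)}$.

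The cleanest way to make this rigorous is probably to avoid the derivative-by-derivative bookkeeping and instead argue directly: take any non-isolated point $\gl{p}\in\gl{X}_N$ of maximal Cantor--Bendixson rank, pick a basic $\varphi$-formula $\phi(x)$ such that $[\phi(x)]_N$ isolates $\gl{p}$ in $\gl{X}_N^{(r)}$ where $r$ is the rank of $\gl{p}$; since $\phi(x)\notin\ideal$ (its clopen set is non-empty so contains a wide type), Proposition \ref{generics} gives a finite $S\subseteq G$ with $G\setminus(S\cdot\phi)\in\ideal$, i.e. $\bigcup_{g\in S}g\cdot[\phi(x)]_N$ is a clopen set whose complement in $\gl{X}_N$ is contained in the clopen set corresponding to $G\setminus(S\cdot\phi)\in\ideal$. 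But any clopen subset of $\gl{X}_N$ coming from a formula in $\ideal$ is empty, since $\gl{X}_N$ consists of wide types; hence $\bigcup_{g\in S}g\cdot[\phi(x)]_N=\gl{X}_N$. Each translate $g\cdot[\phi(x)]_N$ is a clopen set isolating the point $g\cdot\gl{p}$ in $\gl{X}_N^{(r)}$ (since $g$ is a homeomorphism preserving rank), so $\gl{X}_N^{(r)}\subseteq\{g\cdot\gl{p}:g\in S\}$ is finite, and moreover $\gl{X}_N=\bigcup_{g\in S}g\cdot[\phi(x)]_N$ shows that every point of $\gl{X}_N$ has rank at most $r$ and lies in one of these finitely many isolating neighbourhoods, whence $\gl{X}_N^{(r)}=\gl{X}_N^{(r-1)}$ (every point of rank $r$ being isolated in $\gl{X}_N^{(r-1)}$ forces no point of rank exactly $r$ to be a limit of such). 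Iterating downward, or simply observing that $\gl{X}_N$ is now covered by finitely many pairwise-comparable clopen sets each isolating a single top-rank point, collapses the rank to $0$ and makes $\gl{X}_N$ discrete, hence finite by compactness.

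The main obstacle I anticipate is the bookkeeping in the last step: one must be careful that ``$[\phi(x)]_N$ isolates $\gl{p}$ in the $r$-th derivative'' transfers under the generic covering to a genuine reduction of the Cantor--Bendixson rank of the whole space, and that no point of intermediate rank is overlooked. The translation-invariance of $\ideal$ and the equivariance of $\varphi$ are exactly what guarantee that $g\cdot[\phi(x)]_N$ is again a basic $\varphi$-clopen set and that $g\cdot\gl{p}\in\gl{X}_N$, so the structural inputs are all in place; the work is purely in organizing the induction on the rank. A slightly slicker alternative, if available, is to note that a compact group-homogeneous-enough space of finite Cantor--Bendixson rank on which a group acts with the genericity property of Proposition \ref{generics} cannot have a proper last derivative, and conclude in one stroke. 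Either way, finiteness of $\gl{X}_M$ follows, and then by $\rho_{M,N}$ so does finiteness of $\gl{X}_M$ for the original $M$.
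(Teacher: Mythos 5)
Your toolkit is the right one --- finite Cantor--Bendixson rank, the $G(N)$-action by homeomorphisms, and the genericity of basic $\varphi$-definable sets outside $\ideal$ from Proposition \ref{generics} --- and your covering step ($\bigcup_{g\in S}g\cdot[\phi]_N=\gl{X}_N$ because the complement of $S\cdot\phi$ lies in $\ideal$ and wide types avoid $\ideal$) is sound. The gap is in \emph{which point you apply it to}. You take $\gl{p}$ of maximal rank $r$ and a clopen set $[\phi]_N$ isolating $\gl{p}$ only inside the derivative $\gl{X}_N^{(r)}$; such a set can still contain infinitely many points of lower rank, so the covering only yields $\gl{X}_N^{(r)}\subseteq\{g\cdot\gl{p}:g\in S\}$ --- which is automatic anyway, since the top derivative is closed and discrete, hence finite by compactness --- and does not collapse the rank. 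Your parenthetical ``every point of rank $r$ being isolated in $\gl{X}_N^{(r-1)}$'' is false by definition: a point has rank exactly $r\geq 1$ precisely because it is \emph{not} isolated in $\gl{X}_N^{(r-1)}$. And the asserted conclusion $\gl{X}_N^{(r)}=\gl{X}_N^{(r-1)}$ does not follow from being covered by finitely many clopen sets each meeting the top derivative in a single point (the ordinal $\omega^2+1$ with the order topology is covered by one such clopen set, namely itself, and is infinite of rank $2$). The group action has to be used again to propagate isolation downward, and your write-up never does this; your first sketch via stabilizers of top-derivative points has the same defect of working at the wrong level of the derivative hierarchy.

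The fix is small and is exactly what the paper does: start from an \emph{isolated} point $\gl{p}$ of $\gl{X}_M$ --- one exists because the rank is finite --- isolated by a clopen set $[B]$ with $B$ a basic $\varphi$-definable set, so that $[B]\cap\gl{X}_M=\{\gl{p}\}$ is a singleton. Then $B\notin\ideal$, Proposition \ref{generics} gives the finite set $S$ (realized inside $G(N)$ for an $\abs{M}^+$-saturated $N\succ M$, so that the action of Lemma \ref{actauto} applies and the parameters of $B$ remain wide), and the same covering argument shows that every $\gl{p}'\in\gl{X}_M$ satisfies $\gl{p}=g^{-1}\ast\gl{p}'$ for some $g\in S$. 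Hence $\gl{X}_M$ is contained in the finite orbit $\{g\ast\gl{p}:g\in S\}$; equivalently, every point is a translate of an isolated point and is therefore isolated, and compactness finishes.
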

	
	\begin{proof}
		By compactness it is enough to prove that all the elements of $\gl{X}_M$ are isolated.
		
		The space $\gl{X}_M$ has finite Cantor-Bendixson rank, so there is at least one isolated point, say $\gl{p}$. Clearly we may assume that $\gl{p}$ is isolated by a clopen set of the form $U=[B]$, where $B$ is a basic $\varphi$-definable set. Let $N$ be an $\abs{M}^+$-saturated elementary extension of $M$. We may further assume, by saturation of $N$ and Remark \ref{resultsforXM}, that the set $B$ is definable over $N$. Obviously $B\notin\ideal$, so by Proposition \ref{generics} there exists a finite subset $S$ of $G$ such that $G\setminus (S\cdot B)\in\ideal$. Realizing in $N$ the type of $S$ over the parameters of $B$, we can assume that $S\subset G(N)$.
		
		Let $\gl{p}^\prime$ be an arbitrary element of $\gl{X}_M$, and let $\gl{q}$ be any wide global $\varphi$-type such that $\gl{q}\vert_{\wide{M}}=\gl{p}^\prime$. Then $G\setminus (S\cdot B)\notin\gl{q}$, and there is some $g\in S$ such that $g\cdot B\in\gl{q}$, or equivalently, $B\in g\inv\cdot\gl{q}$. The parameters of $B$ are wide over $M$, so indeed we have that
		\begin{align*}
			B\in(g\inv\cdot\gl{q})\vert_{\wide{M}}=g\inv\ast\gl{p}^\prime.
		\end{align*}
		Now, since $\gl{p}$ is isolated in $\gl{X}_M$ by $[B]$, it follows that $\gl{p}=g^{-1}\ast\gl{p}^\prime$. As the action of $G(N)$ on $\gl{X}_M$ is by homeomorphisms, we have that the type $\gl{p}^\prime$ is isolated.
	\end{proof}
	
	\begin{cor}\label{stabXM}
		Let $N$ be an $\abs{M}^+$-saturated elementary extension of $M$. For every $\gl{p}\in \gl{X}_M$, the subgroup
		\begin{align*}
			\stab_{G(N)}(\gl{p})=\{g\in G(N):g\ast\gl{p}=\gl{p}\}
		\end{align*}
		is definable over $M$ and has finite index.
	\end{cor}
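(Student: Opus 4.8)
The plan is to obtain the finite index from the finiteness of $\gl{X}_M$, and to obtain definability over $M$ by first writing down a defining formula over a sufficiently homogeneous model and then descending all the parameters to $M$, using that every point of $\gl{X}_M$ is $M$-invariant. For the finite index: by Proposition \ref{XMfinite} the space $\gl{X}_M$ is finite, and by Lemma \ref{actauto} the group $G(N)$ acts on it by homeomorphisms, i.e.\ by permutations of a finite set; hence the orbit $G(N)\ast\gl{p}$ has size at most $\abs{\gl{X}_M}$, and the orbit--stabilizer correspondence gives $[G(N):\stab_{G(N)}(\gl{p})]=\abs{G(N)\ast\gl{p}}<\infty$.

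For the defining formula, I would first reduce to the case where $N$ is moreover strongly $\abs{M}^+$-homogeneous: otherwise pass to such an elementary extension $N_0\succ N$, prove the statement over $N_0$, and observe that $\stab_{G(N)}(\gl{p})=\stab_{G(N_0)}(\gl{p})\cap G(N)$ (the action of $G(N)$ is the restriction of that of $G(N_0)$), so any $L(M)$-formula defining $\stab_{G(N_0)}(\gl{p})$ also defines $\stab_{G(N)}(\gl{p})$. Now fix a global $\ideal_x$-wide $\varphi$-type $\gl{q}$ with $\gl{q}\vert_{\wide{M}}=\gl{p}$. Since $\gl{X}_M$ is finite and Hausdorff, $\{\gl{p}\}$ is clopen, so by compactness $\{\gl{p}\}=[B]_M$ for some basic $\varphi$-definable set $B=\bigwedge_k\varphi(x,c_k)^{\eps_k}$; using that every element of $\gl{X}_M$ is $M$-invariant (Remark \ref{resultsforXM}) together with the extension property of wide types, I may arrange that each $c_k$ is $\ideal_y$-wide over $N$. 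For $g\in G(N)$ the tuples $c_k\cdot g$ are then again $\ideal_y$-wide over $N$, by invariance of $\ideal_y$ under right translations by $G$ and because $g\in N$. Writing $\theta=\deftp{\gl{q},N}{\varphi}\in L(N)$ and using equivariance of $\varphi$, Theorem \ref{def2} gives
\begin{align*}
	g\ast\gl{p}=\gl{p}\iff B\in g\ast\gl{p}\iff g\inv\cdot B\in\gl{q}\iff\bigwedge_k\theta(c_k\cdot g)^{\eps_k},
\end{align*}
where the first equivalence uses $[B]_M=\{\gl{p}\}$, the second uses $g\ast\gl{p}=(g\cdot\gl{q})\vert_{\wide{M}}$ and $c_k\in\wide{M}$, and the last uses that $\varphi(x,c_k\cdot g)\in\gl{q}$ iff $\theta(c_k\cdot g)$ holds. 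Thus $\stab_{G(N)}(\gl{p})$ is a definable subgroup, with parameters among the $c_k$ and the (finitely many) parameters of $\theta$.

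To descend the parameters, the key point is that every element of $\gl{X}_M$ is fixed by $\Aut_M(\MB)$ (Remark \ref{resultsforXM}) and that the action commutes with automorphisms, in the sense that $\sigma(g)\ast\gl{p}=\sigma(g\ast\gl{p})$ for $\sigma\in\Aut_M(\MB)$ fixing $N$ setwise — indeed $\sigma(g\cdot\gl{q})=\sigma(g)\cdot\sigma(\gl{q})$ and $\sigma(\gl{q})$ is again a global wide extension of $\sigma(\gl{p})=\gl{p}$. Consequently, if $g\ast\gl{p}=\gl{p}$ and $\sigma$ fixes $N$ pointwise (resp.\ fixes $M$ pointwise and $N$ setwise), then $\sigma(g)\ast\gl{p}=\sigma(g\ast\gl{p})=\sigma(\gl{p})=\gl{p}$, so $\stab_{G(N)}(\gl{p})$ is invariant under $\Aut_N(\MB)$ and under $\Aut_M(N)$. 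Being definable and $\Aut_N(\MB)$-invariant, $\stab_{G(N)}(\gl{p})$ has a canonical parameter fixed by $\Aut_N(\MB)$, hence lying in $\dcl^{\mathrm{eq}}(N)$, so it is $L(N)$-definable; being $L(N)$-definable and $\Aut_M(N)$-invariant with $N$ strongly $\abs{M}^+$-homogeneous, its canonical parameter lies in $\dcl^{\mathrm{eq}}(M)=M^{\mathrm{eq}}$, and since $\stab_{G(N)}(\gl{p})$ is a subset of the home sort this means it is definable over $M$.

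I expect the bookkeeping of parameters to be the delicate part: the isolating set $B$ must be taken with parameters $\ideal_y$-wide over $N$ (not inside $N$, where wideness would fail), so that the definability of types over $N$ from Theorem \ref{def2} can be applied to the translated parameters $c_k\cdot g$; and the two-step descent — first removing the $c_k$ via $\Aut_N(\MB)$-invariance to get $L(N)$-definability, then removing the remaining $N$-parameters via $\Aut_M(N)$-invariance and strong homogeneity to get $L(M)$-definability — hinges on the $M$-invariance of the points of $\gl{X}_M$. The remaining checks (that the action commutes with the relevant automorphisms, and that right translation by elements of $N$ preserves $\ideal_y$-wideness over $N$) are routine.
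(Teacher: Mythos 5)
Your finite-index argument is correct, and so is the first half of your definability argument: isolating $\{\gl{p}\}$ by a basic $\varphi$-definable set $B=\bigwedge_k\varphi(x,c_k)^{\eps_k}$ with the $c_k$ wide over $N$, and showing that for $g\in G(N)$ one has $g\ast\gl{p}=\gl{p}$ iff $\bigwedge_k\theta(c_k\cdot g)^{\eps_k}$, is essentially the paper's Claim 1. The genuine gap is in the descent of parameters. The canonical-parameter argument ``definable $+$ $\Aut_N(\MB)$-invariant $\Rightarrow$ definable over $N$'' requires a \emph{single} definable subset of $\MB$ possessing both properties. You have two different objects, each with only one of them: the set $\chi(\MB)$ defined by $\bigwedge_k\theta(c_k\cdot z)^{\eps_k}$ is definable but not $\Aut_N(\MB)$-invariant, since its parameters $c_k$ lie outside $N$ and, for $\sigma\in\Aut_N(\MB)$ and $g\notin G(N)$, one only controls $\theta(\sigma(c_k)\cdot\sigma(g))\leftrightarrow\theta(c_k\cdot g)$, which says nothing about $\theta(\sigma(c_k)\cdot g)$; whereas the set $\stab_{G(N)}(\gl{p})=\chi(\MB)\cap G(N)$ is (trivially) $\Aut_N(\MB)$-invariant but is only \emph{externally} definable, i.e.\ the trace on $G(N)$ of a definable set with outside parameters. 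Externally definable subsets of a model need not be definable over it (think of the trace $\{a\in N:a<c\}$ of a cut), so the intermediate conclusion ``it is $L(N)$-definable'' does not follow, and the second descent step inherits the same defect.

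What is needed --- and what the paper's proof supplies --- is a mechanism for \emph{eliminating} the wide parameters $c_k$ rather than descending them. The paper proves (its Claim 2) that for a wide complete type $p(y)\in S_y(M)$ and a wide global $\varphi$-type $\gl{q}$, the truth value of $\deftp{\gl{q},M}{\varphi(c\cdot g)}$ is the same for all realizations $c$ of $p$ with $c$ and $c\cdot g$ wide over $M$ (a consequence of Lemma \ref{inv}); compactness then extracts $L(M)$-formulas $\theta_i(y)$ and $\psi(y)$ from which this uniformity already follows. Each external parameter $c_k$ can then be replaced by an existentially quantified variable $y_k$ constrained by $\theta_k(y_k)\land\psi(y_k\cdot z)$, and since $\deftp{\gl{q},M}{\varphi}$ is already an $L(M)$-formula by Theorem \ref{def2} (applied over $M$, not over $N$ as in your write-up), the resulting formula is over $M$ outright. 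If you wish to retain your two-step structure you would still have to prove such a uniformity statement; the automorphism bookkeeping alone cannot replace it.
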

	
	\begin{proof}
		By Proposition \ref{XMfinite} we have that the space $\gl{X}_M$ is finite, so clearly all the stabilizers of the statement have finite index in $G(N)$. Now recall by the proof of Lemma \ref{actauto} that $G(N)$ also acts on $\gl{X}_N$ by translations. Hence, we will first prove that given $\gl{p}\in\gl{X}_N$, the subgroup
		\begin{align*}
			\stab_{G(N)}(\gl{p})=\{g\in G(N):g\cdot\gl{p}=\gl{p}\}
		\end{align*}
		is definable over $M$. This is enough, because for every $\gl{p}\in\gl{X}_M$ we obtain that $\rho_{M,N}(\gl{p})\in\gl{X}_N$ and
		\begin{align*}
			\stab_{G(N)}(\gl{p})=\stab_{G(N)}(\rho_{M,N}(\gl{p})),
		\end{align*}
		where each of the stabilizers is computed with the corresponding action.
		
		Let $\gl{X}_N=\{\gl{p}_1,\ldots,\gl{p}_n\}$. If $n=1$ then $\stab_{G(N)}(\gl{p}_1)=G(N)$, which is definable over $M$. Therefore, suppose that $n\geq 2$ and assume without loss of generality that $\gl{p}=\gl{p}_1$. For every $i=2,\ldots,n$ let $\eps_i\in\{0,1\}$ and let $b_i$ be a wide tuple over $N$ such that $\varphi(x,b_i)^{\eps_i}\in\gl{p}$ and $\neg\varphi(x,b_i)^{\eps_i}\in\gl{p}_i$.
		
		\begin{claim}\label{stab1}
			Let $g\in G(N)$. Then $g\in\stab_{G(N)}(\gl{p})$ if and only if $\varphi(x,b_i)^{\eps_i}\in g\cdot\gl{p}$ for every $i=2,\ldots n$.
		\end{claim}
		
		\begin{claimproof}
			It is obvious that if $g\in\stab_{G(N)}(\gl{p})$ then $\varphi(x,b_i)^{\eps_i}\in\gl{p}= g\cdot\gl{p}$ for every $i=2,\ldots,n$. Reciprocally, suppose that $g\notin\stab_{G(N)}(\gl{p})$, so $g\cdot\gl{p}=\gl{p}_j$ for some $j\neq 1$. Since $\varphi(x,b_j)^{\eps_j}\notin\gl{p}_j$, then $\varphi(x,b_j)^{\eps_j}\notin g\cdot\gl{p}$.
		\end{claimproof}
		
		\begin{claim}\label{stab2}
			Let $\Phi(y)$ be the partial type over $M$ asserting that $y$ is wide over $M$. Then for every wide complete type $p(y)\in S_y(M)$ and every wide global $\varphi$-type $\gl{q}(x)$,
			\begin{align*}
				p(y)\land p(y^\prime)\land\Phi(y\cdot z)\land\Phi(y^\prime\cdot z)\vdash\deftpth{\gl{q},M}{\varphi(y\cdot z)}\leftrightarrow\deftpth{\gl{q},M}{\varphi(y^\prime\cdot z)}.
			\end{align*}
		\end{claim}
		
		\begin{claimproof}
			Let $g\in G$ and assume that $c$ and $c^\prime$ are realizations of $p(y)$ such that both $c\cdot g$ and $c^\prime\cdot g$ are wide over $M$. As $g\cdot\gl{q}$ is a wide global $\varphi$-type and both $c$ and $c^\prime$ are also wide over $M$, by Lemma \ref{inv} and Theorem \ref{def2} we obtain,
			\begin{align*}
				\deftp{\gl{q},M}{\varphi(c\cdot g)}\Leftrightarrow\varphi(x,c\cdot g)\in\gl{q}&\Leftrightarrow\varphi(x,c)\in g\cdot\gl{q}\Leftrightarrow\varphi(x,c^\prime)\in g\cdot\gl{q}\\
				&\Leftrightarrow\varphi(x,c^\prime\cdot g)\in\gl{q}
				\Leftrightarrow\deftp{\gl{q},M}{\varphi(c^\prime\cdot g)},
			\end{align*}
			as required.
		\end{claimproof}
		
		For every $i=2,\ldots,n$ let $p_i(y)=\tpp(b_i/M)$. Applying Claim \ref{stab2} to the types $p_i(y)$ and any wide global $\varphi$-type $\gl{q}(x)$ such that $\gl{q}\vert_{\wide{N}}=\gl{p}$, we obtain by compactness $L(M)$-formulas $\theta_i(y)\in p_i(y)$ and $\psi(y)\in\Phi(y)$ such that
		\begin{align*}
			\theta_i(y)\land\theta_i(y^\prime)\land\psi(y\cdot z)\land\psi(y^\prime\cdot z)\vdash\deftp{\gl{q},M}{\varphi(y\cdot z)}\leftrightarrow\deftp{\gl{q},M}{\varphi(y^\prime\cdot z)}.
		\end{align*}
		Let us see that $\stab_{G(N)}(\gl{p})$ is defined over $M$ by the formula
		\begin{align*}
			\phi(z)=\exists y_2,\ldots,y_n\left(\bigwedge_{i=2}^n\theta_i(y_i)\land\psi(y_i\cdot z)\land\deftp{\gl{q},M}{\varphi^{\eps_i}(y_i\cdot z)}\right).
		\end{align*}
		Observe that for every $g$ in $G(N)$, each $\theta_i(b_i)\land\psi(b_i\cdot g)$ holds. Now suppose that $g\in\stab_{G(N)}(\gl{p})$. By Claim \ref{stab1} we have that $\deftp{\gl{q},M}{\varphi(b_i\cdot g)^{\eps_i}}$ holds. Let $c_i\equiv_Mb_i$ be a tuple in $N$ that is wide over $M,g$. Then clearly $\theta_i(c_i)\land\psi(c_i\cdot g)$ holds, which implies that $\deftp{\gl{q},M}{\varphi(c_i\cdot g)^{\eps_i}}$ holds. So $g$ satisfies $\phi(z)$. On the other hand, assume that $g\in G(N)$ satisfies $\phi(z)$ witnessed by some $c_2,\ldots,c_n$ in $N$. By Claim \ref{stab2}, every $\deftp{\gl{q},M}{\varphi(b_i\cdot g)^{\eps_i}}$ holds, so by Claim \ref{stab1} we obtain that $g\in\stab_{G(N)}(\gl{p})$.
	\end{proof}

	We are now ready to prove one of the key results of this section, which is an adaptation of \cite[Proposition 3.1]{AMP24} to our $\ideal$-stable context. Recall that at the beginning of the section we fixed an  $\emptyset$-definable set $A$ and that we are working with the $\ideal$-stable formula $\varphi(x,y)=A(y\cdot x)$.
	
	\begin{lemma}\label{cosets&wide}
		There exists an $M$-definable finite-index normal subgroup $H$ of $G$ with the following property: for every left coset $C$ of $H$ such that $C\cap A$ is wide, we have that $C\setminus A\in\ideal$.
	\end{lemma}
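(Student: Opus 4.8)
The plan is to take $H$ to be the kernel of the action of $G(N)$ on the finite space $\gl{X}_{\varphi,M}$, for $N$ an $\abs{M}^+$-saturated elementary extension of $M$. Concretely, set $H(N):=\bigcap_{\gl{p}\in\gl{X}_{\varphi,M}}\stab_{G(N)}(\gl{p})$, the kernel of the homomorphism $G(N)\to\operatorname{Sym}(\gl{X}_{\varphi,M})$ given by Lemma \ref{actauto}. By Proposition \ref{XMfinite} the space $\gl{X}_{\varphi,M}$ is finite, and by Corollary \ref{stabXM} each $\stab_{G(N)}(\gl{p})$ is definable over $M$ and of finite index; hence $H(N)$ is a finite intersection of such subgroups, so it is definable over $M$ and of finite index, and as a kernel it is normal in $G(N)$. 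Since ``$\theta(z)$ defines a normal subgroup of finite index at most $n$'' is a first-order property of the $L(M)$-formula $\theta$ defining $H(N)$, and $N\prec\MB$, the same formula defines a normal finite-index subgroup $H$ of $G$.

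Now fix a left coset $C$ of $H$ with $C\cap A$ wide, and suppose toward a contradiction that $C\setminus A\notin\ideal$. Since $G\notin\ideal$, we may choose $b_0\in G(N)$ wide over $M$; put $D_0:=\varphi(x,b_0)^{\MB}=b_0^{-1}A$ and $C':=b_0^{-1}C$, a left coset of $H$. Translating by $b_0$, the set $C'\cap D_0=b_0^{-1}(C\cap A)$ is still wide and $C'\setminus D_0=b_0^{-1}(C\setminus A)\notin\ideal$. Realize inside $N$ an element $c\in(C'\cap D_0)(N)$ wide over $M,b_0$, and then an element $d\in(C'\setminus D_0)(N)$ wide over $M,b_0,c$. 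Since $(b_0,c,d)$ is $M$-independent and the ideals are $S_1$, a permutation of this tuple is again $M$-independent (see \cite{EH12}); in particular $b_0$ is wide over $M,c,d$. Finally, as $c$ and $d$ lie in the same left coset $C'$ of the normal subgroup $H$, the element $h:=c^{-1}d$ lies in $H(N)$, hence so does $h':=chc^{-1}$, and $d=h'c$.

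The core of the argument is the following. For any $a\in G$ wide over $M$, extending $\tpp(a/M)$ to a wide global type and restricting its $\varphi$-part to $\wide{M}$ produces, by Remark \ref{resultsforXM}, the unique $\gl{p}_a\in\gl{X}_{\varphi,M}$ with $\varphi(x,b)\in\gl{p}_a\Leftrightarrow b\cdot a\in A$ for every $b$ wide over $M,a$; and by the equivariance of $\varphi$, the $G(N)$-action of Lemma \ref{actauto} satisfies $g\ast\gl{p}_a=\gl{p}_{g\cdot a}$ whenever $a$ and $g\cdot a$ are wide over $M$ (this is what the proof of Lemma \ref{actauto} and the Remark following it amount to under this parametrization, using Lemma \ref{def1} and Lemma \ref{inv}). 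Hence $\gl{p}_d=\gl{p}_{h'c}=h'\ast\gl{p}_c=\gl{p}_c$, because $h'\in H(N)$ acts trivially. Consequently, for every $b$ wide over $M,c,d$ we have $b\cdot c\in A\Leftrightarrow\varphi(x,b)\in\gl{p}_c=\gl{p}_d\Leftrightarrow b\cdot d\in A$, so the set $Ac^{-1}\triangle Ad^{-1}$ — which is definable over $M\cup\{c,d\}$ since $A$ is $\emptyset$-definable — contains no element wide over $M,c,d$, and therefore $Ac^{-1}\triangle Ad^{-1}\in\ideal$. But $c\in D_0=b_0^{-1}A$ gives $b_0\in Ac^{-1}$, while $d\notin D_0$ gives $b_0\notin Ad^{-1}$, so $b_0\in Ac^{-1}\triangle Ad^{-1}\in\ideal$; this contradicts the wideness of $b_0$ over $M,c,d$. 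Thus $C\setminus A\in\ideal$.

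The delicate point is the identification $g\ast\gl{p}_a=\gl{p}_{g\cdot a}$, which must be extracted from the definition of the action by tracing it through the invariance and definability statements of Section \ref{Istab}; the permutation-invariance of $M$-independence used in the second paragraph is the other ingredient that relies on the $S_1$-hypothesis. Everything else — the finiteness and definability inputs, the translation reduction to $D_0$ (whose purpose is precisely to move the anomalous point of $A$ to the wide element $b_0$), and the final bookkeeping — is routine.
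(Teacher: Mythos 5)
Your construction of $H$ as the kernel of the action on the finite space $\gl{X}_{\varphi,M}$, the identification $g\ast\gl{p}_a=\gl{p}_{g\cdot a}$, and the idea of comparing $\gl{p}_c$ with $\gl{p}_d$ for two elements of the same coset of $H$ all match the paper's strategy and are sound. The gap is in your second paragraph: you claim that, because the ideals are $S_1$, the $M$-independence of $(b_0,c,d)$ is preserved under permutation, so that in particular $b_0$ is wide over $M,c,d$. No such symmetry is available. The $S_1$ property only gives that wide types do not fork over $M$; non-forking is not symmetric in an arbitrary theory, and even if it were, it would only return that $\tpp(b_0/M,c,d)$ does not fork over $M$, which is strictly weaker than wideness (a formula in $\ideal$ need not fork). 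The paper's framework is built precisely around this failure: Definition \ref{Istabformula} must test both orders of the ladder, Theorem \ref{stat} splits into the two cases ``$(a^\prime,b^\prime)$ or $(b^\prime,a^\prime)$ is $M$-independent'', and Corollary \ref{permutation} is a nontrivial consequence of stationarity for the specific ladder configuration, not a general permutation-invariance of independence. Since your final contradiction ($b_0\in Ac^{-1}\triangle Ad^{-1}\in\ideal$) needs exactly that $b_0$ avoid every $M,c,d$-definable set in the ideal, the argument does not close as written; reordering the choices does not help, because $c$ and $d$ are selected inside the $b_0$-definable sets $b_0^{-1}(C\cap A)$ and $b_0^{-1}(C\setminus A)$, so $b_0$ must come first.

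The paper avoids ever needing a ``reversed'' wideness. It fixes a representative $g\in C\cap A$ wide over $M$ and an element $b\in C$ wide over $M,g$, uses that $g\cdot b^{-1}$ lies in the kernel $H(N)$ to conclude $\varphi(x,b\cdot g^{-1})\in\gl{q}\vert_{\wide{M}}$ for a wide global extension $\gl{q}$ of $\tpp(g/M)$, and then applies Theorem \ref{stat} to a realization $a$ of $\gl{q}\vert_{M,b,g}$ via the pairs $(b\cdot g^{-1},a)$ and $(g,b\cdot g^{-1})$, each of which is $M$-independent in an order dictated by the order of construction. That final stationarity step is the device that replaces the wide distinguished witness $b_0$ you are missing; substituting it for your last paragraph repairs the proof.
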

	
	\begin{proof}
		Let $N$ be an $\abs{M}^+$-saturated elementary extension of $M$ and let
		\begin{align*}
			H(N)=\bigcap_{\gl{p}\in\gl{X}_M}\stab_{G(N)}(\gl{p}).
		\end{align*}
		Let $H(x)$ be a formula over $M$ defining this intersection, given by Corollary \ref{stabXM}. The subgroup $H(N)$ of $G(N)$ is normal and has finite index, and so does $H=H(\MB)$ in $G$. 
		
		Let $C=g\cdot H$ for some $g\in G$. If $C\setminus A\notin\ideal$ then there exists an element $h\in C\setminus A$ that is wide over $M,g$. Such element is in particular wide over $M$, so it is enough to prove that every $h\in C$ that is wide over $M$ belongs to $A$.
		
		Since $C\cap A$ is wide, we may assume that $g$ belongs to $A$ and is wide over $M$. Let $h\in C$ be wide over $M$. Realizing the types $\tpp(g/M)$ and $\tpp(h/M)$ in $N$ we may also assume that $g,h\in G(N)$. Now let $b\in N$ be a tuple that is wide over $M,g$ and has the same type over $M$ as $h$. As $H(N)$ has finite index in $G(N)$ then $g\cdot b\inv\in H(N)$, and additionally we have that both $g\cdot b\inv$ and $b\cdot g\inv$ are wide over $M,g$.
		
		The type $\tpp(g/M)$ is wide, so it can be extended to a wide global type $\gl{q}^\prime$. Denote by $\gl{q}=\gl{q}^\prime\vert_\varphi$, which is a wide global $\varphi$-type. Since $g\cdot b^{-1}\in H(N)$ we obtain
		\begin{align*}
			(g\cdot b\inv\cdot\gl{q})\vert_{\wide{M}}=\gl{q}\vert_{\wide{M}}.
		\end{align*}
		On the other hand, since $g\in A$ and $A$ is defined by the formula $\varphi(x,1)$, we have that $\varphi(x,1)\in\gl{q}$, which implies that $\varphi(x,b\cdot g\inv)\in g\cdot b\inv\cdot \gl{q}$. As noted before, $b\cdot g\inv$ is wide over $M$, so in fact $\varphi(x,b\cdot g\inv)\in (g\cdot b\inv\cdot\gl{q})\vert_{\wide{M}}=\gl{q}\vert_{\wide{M}}$.
		
		Finally, let $a$ be a realization of $\gl{q}^\prime\vert_{M,b,g}$. Then $\varphi(a,b\cdot g\inv)$ holds, $a\equiv_Mg$, and both $(b\cdot g\inv,a)$ and $(g,b\cdot g\inv)$ are $M$-independent tuples. By Theorem \ref{stat}, $\varphi(g,b\cdot g\inv)$ holds, which means that $b\in A$. Since $h\equiv_Mb$, we obtain that $h\in A$.
	\end{proof}
	
	The main consequence of the above theorem is that every $\ideal$-stable set is \emph{comparable} to a finite union of cosets of a finite-index subgroup. It is immediate to check that such a finite union $B$ is stable in the sense that the formula $y\cdot x\in B$ is stable.
	
	\begin{theorem}\label{almoststable}
		There exists an $M$-definable finite-index normal subgroup $H$ of $G$ and a finite union $B$ of cosets of $H$ such that
		\begin{align*}
			A(x)\triangle B(x)\in\ideal.
		\end{align*}
	\end{theorem}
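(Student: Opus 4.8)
The plan is to read off the theorem almost immediately from Lemma \ref{cosets&wide}. Fix the $M$-definable finite-index normal subgroup $H$ of $G$ provided by that lemma, and let $C_1,\ldots,C_r$ be an enumeration of the finitely many left cosets of $H$ in $G$. Since $H$ has finite index and is $M$-definable, each $C_i$ is $M$-definable as well: the quotient $G/H$ is a finite $M$-interpretable set, every element of a finite set interpretable over $M$ lies in $\acl(M)=M$ (working in $\MB^{\mathrm{eq}}$, where $M^{\mathrm{eq}}\prec\MB^{\mathrm{eq}}$), and each coset is the preimage under the $M$-definable quotient map of such a point.

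Now I would define $B$ to be the union of those cosets $C_i$ for which $C_i\cap A$ is wide, i.e. $C_i\cap A\notin\ideal$. Then $B$ is a finite union of $M$-definable cosets of $H$, hence an $M$-definable subset of $G$, so $A(x)\triangle B(x)$ is a definable set and the assertion $A\triangle B\in\ideal$ is meaningful. It remains to verify this containment, which I would do by decomposing $A\triangle B=(A\setminus B)\cup(B\setminus A)$ and handling each piece using that the $C_i$ partition $G$ and that $\ideal$ is closed under finite unions. For $B\setminus A=\bigcup_{C_i\subseteq B}(C_i\setminus A)$: each $C_i$ occurring here satisfies that $C_i\cap A$ is wide, so Lemma \ref{cosets&wide} gives $C_i\setminus A\in\ideal$, and the finite union of these sets lies in $\ideal$. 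For $A\setminus B=\bigcup_{C_i\not\subseteq B}(A\cap C_i)$: for each such $C_i$ the set $C_i\cap A$ is by construction not wide, that is $C_i\cap A\in\ideal$, and again the finite union lies in $\ideal$. Hence $A\triangle B\in\ideal$. One also checks directly that $B$ is stable, in the sense that $y\cdot x\in B$ is a stable formula, since $B$ is a Boolean combination of (translates of) a subgroup.

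I do not expect a genuine obstacle in this final argument: all the substantive work has already been absorbed into Lemma \ref{cosets&wide}, and behind it into the finiteness of $\gl{X}_M$ (Proposition \ref{XMfinite}) and the definability of the stabilizers (Corollary \ref{stabXM}). The only point deserving a word of care is the bookkeeping ensuring that $B$, and therefore $A\triangle B$, is an honest ($M$-)definable set, which is why I would make explicit that cosets of an $M$-definable finite-index subgroup are $M$-definable before assembling $B$.
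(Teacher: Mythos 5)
Your argument is correct and coincides with the paper's own proof: both take $H$ from Lemma \ref{cosets&wide}, define $B$ as the union of the cosets $C$ with $C\cap A$ wide, and split $A\triangle B$ into $A\setminus B$ (a finite union of non-wide sets $C\cap A$) and $B\setminus A$ (a finite union of sets $C\setminus A$ handled by the lemma). Your additional remark that each coset of the $M$-definable finite-index subgroup $H$ is itself $M$-definable (since every coset meets $M$ by elementarity) is a harmless precision that the paper leaves implicit.
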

	
	\begin{proof}
		Let $H$ be given by Lemma \ref{cosets&wide} and define the set
		\begin{align*}
			B=\bigcup\{C:C\textrm{ is a left coset of }H\textrm{ such that }C\cap A\textrm{ is wide}\}.
		\end{align*}
		The subgroup $H$ has finite index in $G$, so $B$ is a finite union of cosets, and hence, stable. By construction, $A\setminus B\in\ideal$. On the other hand, we have by Theorem \ref{cosets&wide} that $C\setminus A\in\ideal$ for every left coset $C$ of $H$ such that $C\cap A$ is wide. Consequently $B\setminus A\in\ideal$, and we can conclude that $A\triangle B\in\ideal$.
	\end{proof}
	
	\section{Almost stable sets in finite graphs and finite groups}\label{applications}
	
	In this final part we apply the results from the previous two sections to prove a graph regularity lemma and an arithmetic regularity lemma for arbitrary finite groups.
	
	\begin{definition} \cite[Definition 3.19]{SS17}
		A Keisler measure $\mu=\mu_x$ is \emph{definable over} $A$ if for every formula $\psi(x,y)\in L$ and every $\eps>0$, there is a partition of $\MB^{y}$ into $L(A)$-formulas $\rho_1(y),\ldots,\rho_m(y)$ such that for every $i=1,\ldots,m$ and every $b$ and $b^\prime$ realizing $\rho_i(y)$,
		\begin{align*}
			\abs{\mu(\psi(x,b))-\mu(\psi(x,b^\prime))}<\eps.
		\end{align*}
	\end{definition}
	
	\begin{remark}
		If $\mu$ is definable over $A$, then it is definable over every model $N$ containing $A$, and for every $L(N)$-formula $\psi(x,y)$ the function
		\begin{align*}
			F_{\mu,N}^\psi:S_y(M)\to[0,1]\textrm{, }\tpp(b/N)\mapsto\mu(\psi(x,b))
		\end{align*}
		is well-defined and continuous \cite[Proposition 2.17]{KG20}. In particular, every definable measure is Borel-definable over the same set of parameters. Moreover, if $\mu=\mu_x$ and $\nu=\nu_y$ are definable over $N$ and $\eta=\eta_z$ is any Keisler measure, then $\mu\otimes\nu$ is definable over $N$ and $\mu\otimes(\nu\otimes\eta)=(\mu\otimes\nu)\otimes\eta$ \cite[Proposition 2.24]{KG20}.
	\end{remark}
	
	\begin{fact}\cite[Lemmas 1.10 and 1.15]{AMP24}\label{widetuple}
		Let $\mu=\mu_x$ and $\nu=\nu_y$ be two $\emptyset$-definable Keisler measures satisfying Fubini-Tonelli, i.e. for every formula $\psi(x,y)$ and every model $N$ containing the parameters of $\psi$,
		\begin{align*}
			\int_{\tpp(a/N)\in S_x(N)}\nu(\psi(a,y))\mathrm{d}\mu_x\vert_N=\int_{\tpp(b/N)\in S_y(N)}\mu(\psi(x,b))\mathrm{d}\nu_y\vert_N.
		\end{align*}
		Then a formula $\varphi(x,y)$ is almost sure stable of ladder $k$ with respect to $\mu$ and $\nu$ if and only if
		\begin{align*}
			\lambda_k\left(\{(a_1,b_1,\ldots,a_k,b_k):\varphi(a_i,b_j)\textrm{ holds if and only if }i\leq j\}\right)=0,
		\end{align*}
		where $\lambda_k=\mu\otimes\nu\otimes\overset{(k)}{\cdots}\otimes\mu\otimes\nu$.
	\end{fact}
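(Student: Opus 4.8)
Write $\mcl{H}_k$ for the half-graph set $\{(a_1,b_1,\dots,a_k,b_k):\varphi(a_i,b_j)\Leftrightarrow i\le j\}$ from the statement, and attach to the $i$-th $x$-coordinate the measure $\mu$ and to the $i$-th $y$-coordinate the measure $\nu$. The plan is to prove both implications by contraposition; neither direction needs the stationarity principle or the spaces $\gl{X}_M$, which is why this result belongs in an appendix. I would first record three bookkeeping facts. \emph{(i)} Since $\mu$ and $\nu$ are $\emptyset$-invariant, for any parameter set $N$ the restrictions $\mu\vert_N,\nu\vert_N$ are Borel regular probability measures, and the types over $N$ that are not $\mu$-wide form the open set $\bigcup\{[\psi]:\psi\in L_x(N),\ \mu(\psi)=0\}$, which by inner regularity is $\mu\vert_N$-null (any compact piece is covered by finitely many null clopen sets); hence the non-$\mu$-wide elements over $N$ form a $\mu$-null set, and symmetrically for $\nu$. \emph{(ii)} If $\tpp(a/N)$ is $\mu$-wide, then every $L_x(N)$-formula it contains lies off the null ideal and so defines a set of positive $\mu$-measure; thus a $\mu$-wide point over $N$ sits inside an $L_x(N)$-definable set of positive $\mu$-measure, and likewise for $\nu$. \emph{(iii)} Since $\mu,\nu$ are $\emptyset$-definable, every iterated $\otimes$-product of copies of $\mu$ and $\nu$ is again definable, so $\lambda_k$ is a well-defined definable Keisler measure and the partial integrals below are continuous in the relevant types; and Fubini--Tonelli says exactly that $\mu\otimes\nu=\nu\otimes\mu$, so by associativity $\lambda_k(\mcl{H}_k)$ may be computed as the iterated integral of $\mathbbm{1}_{\mcl{H}_k}$ over the $2k$ coordinates $a_1,b_1,\dots,a_k,b_k$ taken in any order, each integrated over a type space over the base model together with the coordinates already integrated out.

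For the implication $\lambda_k(\mcl{H}_k)=0\Rightarrow\varphi$ almost sure stable of ladder $k$ I argue by contraposition. Suppose $\varphi$ is not $\ideal$-stable of ladder $k$: fix a model $M$ and a tuple $(a_1,b_1,\dots,a_k,b_k)$ with $\varphi(a_i,b_j)\Leftrightarrow i\le j$ that is $M$-independent in the forward or the reversed order. Using \emph{(iii)}, pick the integration order (over the base model $M$) agreeing with that independence order, so the $j$-th integrated coordinate $v_j$ has its fixed value wide over $M$ together with the first $j-1$ fixed coordinates. For $0\le j\le 2k$ let $\Lambda_j$ be the nonnegative continuous function of the first $j$ coordinates obtained by integrating out the last $2k-j$, so that $\Lambda_{2k}=\mathbbm{1}_{\mcl{H}_k}$, $\Lambda_0=\lambda_k(\mcl{H}_k)$, and $\Lambda_{j-1}=\int\Lambda_j\,dm_j$ over its last variable ($m_j\in\{\mu,\nu\}$ the attached measure). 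I then show by downward induction on $j$ that $\Lambda_j>0$ when its free variables are specialized to the corresponding coordinates of the fixed tuple. The base case $j=2k$ holds since $(a_1,\dots,b_k)\in\mcl{H}_k$. For the step, specialize the first $j-1$ variables; then $\Lambda_j(\cdots,\cdot)$ is continuous, nonnegative, and positive at the fixed value of $v_j$, so its positivity locus is open and contains that value, and as the value is wide over $M$ and the earlier specializations, \emph{(ii)} places it inside a clopen set of positive $m_j$-measure on which $\Lambda_j>0$; hence $\Lambda_{j-1}>0$ at the specialization. Taking $j=0$ gives $\lambda_k(\mcl{H}_k)>0$.

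Conversely, suppose $\lambda_k(\mcl{H}_k)>0$, fix a model $M$, and compute $\lambda_k(\mcl{H}_k)$ over $M$ with the coordinates in the order $b_k,a_k,b_{k-1},a_{k-1},\dots,b_1,a_1$, relabelled $w_1,\dots,w_{2k}$; let $G_j$ be the partial integral in $w_1,\dots,w_j$, so $G_0=\lambda_k(\mcl{H}_k)>0$. I construct $w_1,\dots,w_{2k-1}$ recursively: given $w_{<j}$ with $G_{j-1}(w_{<j})>0$, the set $\{w_j:G_j(w_{<j},w_j)>0\}$ is open with positive $m_j$-measure (its integral over $w_j$ is $G_{j-1}(w_{<j})$), so by \emph{(i)} it contains some $w_j$ wide over $M,w_{<j}$, and I take such a $w_j$, now with $G_j(w_1,\dots,w_j)>0$. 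After the last step, $G_{2k-1}(w_1,\dots,w_{2k-1})=\mu(D)>0$, where $D$ is the set obtained by putting a free variable $x$ in the $a_1$-coordinate and $w_1,\dots,w_{2k-1}$ in the others; inspecting the pattern, $D$ equals the basic $\varphi$-definable set $\bigwedge_{j\le k}\varphi(x,b_j)$ conjoined with a parameter-sentence which must hold (else $D=\emptyset$), and being of positive measure $D$ is wide, so it contains a point $w_{2k}=a_1$ wide over $M,w_{<2k}$. Reading $(w_1,\dots,w_{2k})$ back as $(b_k,a_k,\dots,b_1,a_1)$, this tuple is $M$-independent in the reversed order and satisfies $\varphi(a_i,b_j)\Leftrightarrow i\le j$, so $\varphi$ is not $\ideal$-stable of ladder $k$.

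The main obstacle is the inductive step of the first implication — producing a \emph{positive-measure} family of half-graphs from a \emph{single} $M$-independent one — and this is where $\emptyset$-definability of $\mu$ and $\nu$ is indispensable: it makes the partial integrals $\Lambda_j$ continuous, hence their positivity loci open, so that \emph{(ii)} can be applied; the converse implication, by contrast, only uses measurability of the partial integrals together with conullity of the wide points.
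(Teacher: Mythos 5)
The paper offers no proof of this statement: it is quoted as a Fact from \cite{AMP24}, so there is nothing internal to compare against. Judged on its own terms, your argument is the standard one and is essentially correct. Both contrapositions are set up properly: Borel-definability of the iterated products makes the partial integrals continuous in the type of the next variable; regularity shows the non-wide types over any parameter set form a null (indeed inner-null open) set; and the coordinate-by-coordinate propagation (resp.\ extraction) of wideness is carried out in the right order. Your identification of the last fibre $D$ in the converse direction with $\bigwedge_{j\le k}\varphi(x,b_j)$ is correct, and the tuple you produce, read as $(b_k,a_k,\ldots,b_1,a_1)$, is exactly one of the two configurations forbidden by Definition \ref{Istabformula}.

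The one step you should not wave through is item \emph{(iii)}. With the paper's convention $\mu\otimes\nu(\psi)=\int\mu(\psi(x,b))\,\mathrm{d}\nu$, the product $\lambda_k$ unfolds as the iterated integral with outermost variable $b_k$ and innermost $a_1$, i.e.\ precisely the reversed order $(b_k,a_k,\ldots,b_1,a_1)$; so your converse direction, and the reversed-independence case of the forward direction, need no rearrangement whatsoever. But the forward-independence case requires the fully reversed iterated integral (outermost $a_1$), and any chain of adjacent transpositions realizing that reversal must at some point commute two copies of $\mu$ (and two copies of $\nu$) past each other. The displayed hypothesis only asserts commutation of $\mu_x$ with $\nu_y$ on a formula $\psi(x,y)$, so your sentence ``so by associativity $\lambda_k(\mcl{H}_k)$ may be computed \ldots\ in any order'' claims more than it delivers: Fubini--Tonelli for $\mu$ against itself and for $\nu$ against itself are genuinely additional instances. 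This is harmless in the paper's applications, where $\mu$ and $\nu$ are pseudofinite counting measures and the argument of Lemma \ref{fubini} yields every such instance, and it is covered by the hypotheses of the cited source; but you should either invoke those extra instances explicitly or restrict your rearrangements to the two orders the hypothesis actually provides and handle the forward-independence case by appealing to them.
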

	
	We now proceed to set the context that will be used throughout the results of this section.
	
	Let $L$ be a countable language with sorts $U(x)$ and $V(y)$, and for every natural number $n$ consider a finite $L$-structure $G_n=(U_n,V_n,\ldots)$ with $\abs{U_n},\abs{V_n}\geq\abs{n}$. Let $\mcl{U}$ be a non-principal ultrafilter on $\NB$ and define the ultraproduct $G=\prod_{n\to\mcl{U}}G_n$, which is an infinite $\aleph_1$-saturated structure equipped, among others, with the internal sets $U=\prod_{n\to\mcl{U}}U_n$ and $V=\prod_{n\to\mcl{U}}V_n$ corresponding with the sorts of $x$ and $y$. In each $G_n$ we have the normalized counting measures $\mu_n$ and $\nu_n$ given by
	\begin{align*}
		\mu_n(A)=\frac{\abs{A}}{\abs{U_n}}\textrm{ for }A\subset U_n\textrm{ and }
		\nu_n(B)=\frac{\abs{B}}{\abs{V_n}}\textrm{ for }B\subset V_n.
	\end{align*}
	Then we can assume that these measures induce $\emptyset$-definable global Keisler measures $\mu=\mu_x$ and $\nu=\nu_y$ on $T=\textrm{Th}(G)$ with the additional property that for all internal sets $A\subset U$ and $B\subset V$,
	\begin{align*}
		\mu(A)=\lim_{n\to\mcl{U}}\mu_n(A(G_n))\textrm{ and }
		\nu(B)=\lim_{n\to\mcl{U}}\nu_n(B(G_n)),
	\end{align*}
	where $A(G_n)$ is the trace of the internal set $A$ in $G_n$ (see \cite[Section 2.6]{EH12}).
	
	\begin{lemma}\label{fubini}
		The measures $\mu$ and $\nu$ satisfy Fubini-Tonelli.
	\end{lemma}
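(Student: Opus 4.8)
The strategy is to descend to the finite structures $G_n$, where Fubini--Tonelli is a one-line double count, and to push the equality through the ultralimit by means of the definability of $\mu$ and $\nu$. Since $\mu$ and $\nu$ are $\emptyset$-definable (hence Borel-definable), the measures $\mu\otimes\nu$ and $\nu\otimes\mu$ are well defined and independent of the base model, and the Fubini--Tonelli identity for a formula $\psi(x,y)$ is exactly the assertion $\mu\otimes\nu(\psi(x,y))=\nu\otimes\mu(\psi^*(y,x))$. If $\psi(x,y)=\psi_0(x,y,c)$ with $c$ a tuple from $\MB$, then by $\aleph_1$-saturation of $G$ there is $c^\prime\in G$ with $c^\prime\equiv_\emptyset c$; an automorphism of $\MB$ sending $c$ to $c^\prime$, together with the $\emptyset$-invariance of $\mu$ and $\nu$ (hence of $\mu\otimes\nu$ and $\nu\otimes\mu$), shows that the identity for $\psi_0(x,y,c)$ is equivalent to the one for $\psi_0(x,y,c^\prime)$. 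So we may assume that the parameters of $\psi$ lie in $G$ and take $G$ itself as the base model. Write $R_n\subseteq U_n\times V_n$ for the trace in $G_n$ of the internal relation defined by $\psi$, and $R_n^a=\{b\in V_n:(a,b)\in R_n\}$, $R_n^b=\{a\in U_n:(a,b)\in R_n\}$ for its fibres. By double counting in each $G_n$,
\[
\frac{1}{\abs{U_n}}\sum_{a\in U_n}\nu_n(R_n^a)=\frac{\abs{R_n}}{\abs{U_n}\abs{V_n}}=\frac{1}{\abs{V_n}}\sum_{b\in V_n}\mu_n(R_n^b),
\]
so it suffices to prove the single claim $\int_{S_x(G)}\nu(\psi(a,y))\,\mathrm{d}\mu_x\vert_G=\lim_{n\to\mcl{U}}\abs{R_n}/(\abs{U_n}\abs{V_n})$, since exchanging the roles of $\mu$ and $\nu$ (using the definability of $\mu$) then yields the symmetric claim and with it the lemma.

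To prove the claim, fix $\eps>0$. By definability of $\nu$ over $G$, partition $U$ into $L(G)$-formulas $\rho_1(x),\dots,\rho_m(x)$, each defining an internal subset of $U$ with trace $\rho_i(G_n)$ in $G_n$, and choose reals $r_1,\dots,r_m\in[0,1]$ such that $\abs{\nu(\psi(a,y))-r_i}<\eps$ whenever $a\models\rho_i$. The crux is the transfer statement: \emph{for $\mcl{U}$-almost all $n$, and for every $i$ and every $a\in\rho_i(G_n)$, one has $\abs{\nu_n(R_n^a)-r_i}\le\eps$.} I would prove this by contradiction: otherwise, after passing to a set in $\mcl{U}$ and fixing a single index $i$ (a finite union), we can choose $a_n\in\rho_i(G_n)$ with $\abs{\nu_n(R_n^{a_n})-r_i}>\eps$ along a set in $\mcl{U}$; then $a=(a_n)_{n\to\mcl{U}}$ satisfies $\rho_i$ by {\L}o\'s's theorem, while $\nu(\psi(a,y))=\lim_{n\to\mcl{U}}\nu_n(R_n^{a_n})$ by the defining property of $\nu$ on internal sets, which forces $\abs{\nu(\psi(a,y))-r_i}\ge\eps$, contradicting the choice of the $\rho_i$.

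Granting this, for $\mcl{U}$-almost all $n$ the finite double-counting identity above, split over the partition, gives $\bigl\lvert\,\abs{R_n}/(\abs{U_n}\abs{V_n})-\sum_{i}r_i\,\mu_n(\rho_i(G_n))\bigr\rvert\le\eps$; taking $\lim_{n\to\mcl{U}}$ and using $\mu(\rho_i)=\lim_{n\to\mcl{U}}\mu_n(\rho_i(G_n))$ yields $\bigl\lvert\lim_{n\to\mcl{U}}\abs{R_n}/(\abs{U_n}\abs{V_n})-\sum_{i}r_i\,\mu(\rho_i)\bigr\rvert\le\eps$. On the other hand, the integrand $\tpp(a/G)\mapsto\nu(\psi(a,y))$ is continuous (again by definability of $\nu$) and lies within $\eps$ of $r_i$ on the clopen set $[\rho_i]$, so the integral differs from $\sum_i r_i\,\mu(\rho_i)$ by at most $\eps$. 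Combining the two estimates, the two sides of the claim differ by at most $2\eps$; letting $\eps\to0$ proves the claim, hence the lemma. The only nontrivial point is the transfer statement---upgrading ``the estimate holds for every element of the ultraproduct'' to ``the estimate holds, uniformly, for every element of almost every $G_n$''---which is precisely where {\L}o\'s's theorem and the description of $\nu$ on internal sets are indispensable; everything else is bookkeeping with the two regular Borel measures.
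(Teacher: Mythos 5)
Your proof is correct, and its skeleton is the same as the paper's: reduce, via $\aleph_1$-saturation of $G$ and $\emptyset$-invariance of the two iterated-integral measures, to a formula with parameters in $G$ itself, and then identify both iterated integrals with the ultralimit of the normalized counts $\abs{R_n}/(\abs{U_n}\abs{V_n})$, which are equal on the nose in each finite $G_n$ by double counting. The one genuine difference is how that identification is established: the paper simply cites \cite[Theorem 19]{VT13} (Bergelson--Tao) and its proof for the fact that the iterated integral of an $L(G)$-formula equals the counting ultralimit, whereas you prove it from scratch using the definability of $\nu$ (to get the finite partition $\rho_1,\dots,\rho_m$ with approximately constant fibre measures), a {\L}o\'s/ultraproduct transfer to make the $\eps$-estimate uniform over almost every $G_n$, and the continuity of the integrand on $S_x(G)$. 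Your transfer step is sound (the negation picks a single index $i$ by pigeonhole on the finitely many cells, and the resulting element $a=(a_n)_{n\to\mcl{U}}$ contradicts the choice of $r_i$ because $\nu(\psi(a,y))=\lim_{n\to\mcl{U}}\nu_n(R_n^{a_n})$ on internal sets). So what you lose in brevity you gain in self-containedness; your version also makes the normalization by $\abs{U_n}\abs{V_n}$ explicit, which is slightly more careful than the paper's $\abs{G_n}^{\abs{x}+\abs{y}}$.
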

	
	\begin{proof}
		Define the global measures $\lambda_1$ and $\lambda_2$ as follows. For every formula $\psi(x,y)$,
		\begin{align*}
			\lambda_1(\psi(x,y))=\int_{\tpp(a/N)\in S_x(N)}\nu(\psi(a,y))\mathrm{d}\mu_x\vert_N
		\end{align*}
		and
		\begin{align*}
			\lambda_2(\psi(x,y))=\int_{\tpp(b/N)\in S_y(N)}\mu(\psi(x,b))\mathrm{d}\nu_y\vert_N,
		\end{align*}
		where $N$ is any model of $T$ containing the parameters of $\psi(x,y)$. If $\mu$ and $\nu$ do not satisfy Fubini-Tonelli, then there is some formula $\psi(x,y,z)\in L$ such that $\lambda_1(\psi(x,y,c))\neq\lambda_2(\psi(x,y,c))$, where $c$ is a parameter that belongs to some model of $T$. Since $G$ is $\aleph_1$-saturated, we can realize $\tpp(c)$ by some $c^\prime$ in $G$, and by the invariance of the measures $\lambda_1$ and $\lambda_2$ we obtain that $\lambda_1(\psi(x,y,c^\prime))\neq\lambda_2(\psi(x,y,c^\prime))$. But note that
		\begin{align*}
			\lambda_1(\psi(x,y,c^\prime))=\int_{\tpp(a/G)\in S_x(G)}\nu(\psi(a,y,c^\prime))\mathrm{d}\mu_x\vert_G
		\end{align*}
		and
		\begin{align*}
			\lambda_2(\psi(x,y,c^\prime))=\int_{\tpp(b/G)\in S_y(G)}\mu(\psi(x,b,c^\prime))\mathrm{d}\nu_y\vert_G.
		\end{align*}
		This is a contradiction, for $\psi(x,y,c^\prime)$ is an $L(G)$-formula and hence the above integrals are both equal to the ultralimit
		\begin{align*}
			\lim_{n\to\mcl{U}}\frac{\abs{\psi(G_n,c^\prime)}}{\abs{U_n\times V_n}}
		\end{align*}
		by \cite[Theorem 19]{VT13} and its proof, where $\psi(G_n,c^\prime)$ is the set of pairs $(a,b)\in U_n\times V_n$ such that $\psi(a,b,c^\prime)$ holds in $G_n$. Therefore $\mu$ and $\nu$ satisfy Fubini-Tonelli.
	\end{proof}
	
	We now prove that if a formula $\varphi(x,y)\in L$ has only a few counterexamples to stability in each $G_n$, then it is almost sure stable in the ultraproduct. More precisely:
	
	\begin{definition}
		Given a formula $\varphi(x,y)\in L$ and $k\in\NB$, define
		\begin{align*}
			\mcl{H}_k(\varphi(G_n))=\{(a_1,b_1,\ldots,a_k,b_k)\in (U_n\times V_n)^k:\varphi(a_i,b_j)\textrm{ holds}\Leftrightarrow i\leq j\}
		\end{align*}
		to be the set of half-graphs of height $k$ induced by $\varphi(x,y)$.
	\end{definition}
	
	\begin{lemma}\label{finitestr}
		Let $\varphi(x,y)\in L$. If there exists a natural number $k$ such that
		\begin{align*}
			\frac{\abs{\mcl{H}_k(\varphi(G_n))}}{\abs{U_n\times V_n}^k}<\frac{1}{n}
		\end{align*}
		for every natural number $n$, then the formula $\varphi(x,y)$ is almost sure stable of ladder $k$ with respect to $\mu$ and $\nu$ in the theory $T$.
	\end{lemma}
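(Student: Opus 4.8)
The plan is to combine Fact \ref{widetuple} with Łoś's theorem and the definition of the measures $\mu,\nu$ as ultralimits of normalized counting measures. By Lemma \ref{fubini}, the measures $\mu$ and $\nu$ satisfy Fubini--Tonelli, and by construction they are $\emptyset$-definable, so Fact \ref{widetuple} applies: $\varphi(x,y)$ is almost sure stable of ladder $k$ with respect to $\mu$ and $\nu$ if and only if $\lambda_k(\mcl{H}_k(\varphi(\MB)))=0$, where $\lambda_k=\mu\otimes\nu\otimes\overset{(k)}{\cdots}\otimes\mu\otimes\nu$ and, abusing notation, $\mcl{H}_k(\varphi(\MB))$ denotes the definable set $\{(a_1,b_1,\ldots,a_k,b_k):\varphi(a_i,b_j)\Leftrightarrow i\leq j\}$. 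So it suffices to prove that this $\lambda_k$-measure is zero.

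First I would observe that $\mcl{H}_k(\varphi(x,y))$ is defined by a single $L$-formula $\chi(x_1,y_1,\ldots,x_k,y_k)$, namely the conjunction $\bigwedge_{i\le j}\varphi(x_i,y_j)\wedge\bigwedge_{i>j}\neg\varphi(x_i,y_j)$, with no parameters. Since $\mu$ and $\nu$ are $\emptyset$-definable and satisfy Fubini--Tonelli (by Lemma \ref{fubini}, applied iteratively), the iterated measure $\lambda_k$ is itself $\emptyset$-definable by the remark preceding Fact \ref{widetuple}, and in particular the value $\lambda_k(\chi(x_1,y_1,\ldots,x_k,y_k))$ is a well-defined real number computed over any model. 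As in the proof of Lemma \ref{fubini}, I would invoke \cite[Theorem 19]{VT13} (and its proof): for an $L(G)$-formula — and $\chi$ has no parameters, so it is in particular over $G$ — the iterated integral defining $\lambda_k(\chi)$ equals the ultralimit of the normalized counting quantities over the finite structures, that is,
\begin{align*}
\lambda_k\big(\mcl{H}_k(\varphi(\MB))\big)=\lim_{n\to\mcl{U}}\frac{\abs{\mcl{H}_k(\varphi(G_n))}}{\abs{G_n}^{k(\abs{x}+\abs{y})}}.
\end{align*}
Here I use that the finite structure $G_n$ carries the product counting measure on the appropriate power, and that the iterated-product construction at the level of the ultraproduct is the ultralimit of the finite iterated products — this is exactly the content of the cited theorem as already exploited in Lemma \ref{fubini}.

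Finally I would apply the hypothesis. By assumption, for every natural number $n$ we have
\begin{align*}
\frac{\abs{\mcl{H}_k(\varphi(G_n))}}{\abs{G_n}^{k(\abs{x}+\abs{y})}}<\frac{1}{n},
\end{align*}
and since $1/n\to 0$ as $n\to\infty$, the ultralimit along the non-principal ultrafilter $\mcl{U}$ of this sequence is $0$. Hence $\lambda_k(\mcl{H}_k(\varphi(\MB)))=0$, and by Fact \ref{widetuple} the formula $\varphi(x,y)$ is almost sure stable of ladder $k$ with respect to $\mu$ and $\nu$ in $T$, as desired. The only step requiring care is the identification of $\lambda_k(\chi)$ with the ultralimit of the finite ratios: one must check that the hypotheses of Fact \ref{widetuple} and of \cite[Theorem 19]{VT13} are met for the $k$-fold iterated measure, i.e. that $\mu$ and $\nu$ remain $\emptyset$-definable and Fubini--Tonelli-compatible through the iteration (which follows from the remark on definable measures preceding Fact \ref{widetuple} together with Lemma \ref{fubini}), so that the integral genuinely computes the ultralimit of $\abs{\chi(G_n)}/\abs{G_n}^{k(\abs{x}+\abs{y})}$; everything else is a direct substitution.
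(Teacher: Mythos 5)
Your proposal is correct and follows essentially the same route as the paper: both rest on Fact \ref{widetuple} (via Lemma \ref{fubini}) to reduce the claim to showing $\lambda_k$ of the half-graph formula vanishes, and both identify that value with the ultralimit of the finite counting ratios via \cite[Theorem 19]{VT13}. The only difference is presentational — you argue directly that the ultralimit is $0$ (correctly, since $c_n<1/n$ forces the ultralimit along a non-principal ultrafilter to vanish), whereas the paper runs the contrapositive — which is immaterial.
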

	
	\begin{proof}
		By Lemma \ref{fubini}, the measures $\mu$ and $\nu$ satisfy Fubini-Tonelli, so if $\varphi(x,y)$ is not almost sure stable of ladder $k$ with respect to $\mu$ and $\nu$, then by Fact \ref{widetuple} the formula
		\begin{align*}
			\psi(x_1,y_1,\ldots,x_k,y_k)=\bigwedge_{i\leq j}\varphi(x_i,y_j)\land\bigwedge_{i>j}\neg\varphi(x_i,y_j)
		\end{align*}
		has positive measure, say $\lambda_k(\psi)>1/N$ for some natural number $N$. On the other hand, by \cite[Theorem 19]{VT13} we have that
		\begin{align*}
			\lambda_k(\psi)=\lim_{n\to\mcl{U}}\frac{\abs{\psi(G_n)}}{\abs{U_n\times V_n}^k}=\frac{\abs{\mcl{H}_k(\varphi(G_n))}}{\abs{U_n\times V_n}^k},
		\end{align*}
		implying that there exists some $n\geq N$ such that $\abs{\mcl{H}_k(\varphi(G_n))}/\abs{U_n\times V_n}^k>1/n$. By assumption this is a contradiction, so $\varphi(x,y)$ is almost sure stable of ladder $k$ with respect to $\mu$ and $\nu$.
	\end{proof}
	
	Let $G=(U,V,E)$ be a bipartite graph whose vertices are partitioned into $U$ and $V$ and whose edges are $E\subset U\times V$. Recall that given $X\subset U$ and $Y\subset V$, we say that $E\vert_{X\times Y}$ is $\eps$\emph{-homogeneous} if either
	\begin{align*}
		\abs{E\cap(X\times Y)}\geq(1-\eps)^2\cdot\abs{X\times Y}\textrm{ or }\abs{\neg E\cap(X\times Y)}\geq(1-\eps)^2\cdot\abs{X\times Y}.
	\end{align*}
	In the next result we show that if the edge relation $E$ induces very few half-graphs of height $k$, then homogeneous partitions of the sets $U$ and $V$ can be obtained, yet without the explicit polynomial bounds that can be deduced from the proof of \cite[Lemma 6.14]{TW21}. However, since our approach is model-theoretic rather than combinatorial, the proof provides structural control over the pieces of the resulting partitions, ensuring that the obtained sets have bounded complexity.
	
	\begin{theorem}\label{fgraphs}
		For every natural number $k$ and all real numbers $\eps,\delta>0$, there are natural numbers $n=n(k,\eps,\delta)$, $r=r(k,\eps,\delta)$ and $s=s(k,\eps,\delta)$ with the following property: if $G=(U,V,E)$ is any finite bipartite graph with $\abs{U},\abs{V}\geq n$ and $\abs{\mcl{H}_k(E)}<\abs{U}^k\abs{V}^k/n$, then we can partition $U$ into sets $U_0,U_1,\ldots,U_r$ such that each $U_i$ with $i\geq 1$ is a Boolean combination of neighbourhoods of the form $E(x,b)$ and $\neg E(x,b)$ of complexity at most $n$ with $\abs{U_i}\geq\abs{U}/n$, and $V$ into sets $V_0,V_1,\ldots,V_s$ such that each $V_i$ with $i\geq 1$ is a Boolean combination of neighbourhoods of the form $E(a,y)$ and $\neg E(a,y)$ of complexity at most $n$ with $\abs{V_i}\geq\abs{V}/n$, such that
		\begin{align*}
			\abs{U_0}<\delta\cdot\abs{U} \textrm{ and } \abs{V_0}<\delta\cdot\abs{V}
		\end{align*}
		and for every $i,j\geq 1$, the relation $E\vert_{U_i\times V_j}$ is $\eps$-homogeneous.
	\end{theorem}
	
	\begin{proof}
		If the statement is false, then there is a natural number $k$ and real numbers $\eps,\delta>0$ such that, for all natural numbers $n,r,s$ there is a finite bipartite graph $G_n=(U_n,V_n,E_n)$ with $\abs{U_n},\abs{V_n}\geq\abs{n}$ and $\abs{\mcl{H}_k(E_n)}<\abs{U_n}^k\abs{V_n}^k/n$ with the following property: for all pairwise disjoint subsets $U_{n,1},\ldots,U_{n,r}$ of $U_n$ and $V_{n,1},\ldots,V_{n,s}$ of $V_n$ such that each $U_{n,i}$ is a Boolean combination of neighbourhoods of the form $E(x,b)$ and $\neg E(x,b)$ of complexity at most $n$ with $\abs{U_{n,i}}\geq\abs{U_n}/n$, and such that each $V_{n,i}$ is a Boolean combination of neighbourhoods of the form $E(a,y)$ and $\neg E(a,y)$ of complexity at most $n$ with $\abs{V_{n,i}}\geq\abs{V_n}/n$, if
		\begin{align*}
			\abs{U_{n,1}\cup\ldots\cup U_{n,r}}>(1-\delta)\cdot\abs{U_n} \textrm{ and } \abs{V_{n,1}\cup\ldots\cup V_{n,s}}>(1-\delta)\cdot\abs{V_n},
		\end{align*}
		then there are $1\leq i\leq r$ and $1\leq j\leq s$ such that $E_n\vert_{U_{n,i}\times V_{n,j}}$ is not $\eps$-homogeneous.
		
		The graphs $G_n$ are $L$-structures, where $L$ is a suitable extension of the language of bipartite graphs, see \cite[Section 2.6]{EH12}. Let $\mcl{U}$ be a non-principal ultrafilter on $\NB$, and define $G$ to be the ultraproduct $\prod_{n\to\mcl{U}}G_n$. By Lemma \ref{finitestr}, the formula $E(x,y)$ is almost sure stable of ladder $k$ in the theory $T=\textrm{Th}(G)$ with respect to $\emptyset$-definable global measures $\mu$ and $\nu$ induced by the counting measures $\mu_n$ and $\nu_n$ defined, respectively, on each $U_n$ and each $V_n$.

		Fix a countable elementary substructure $M$ of $G$, and consider the topological spaces $\gl{X}_{E,M}$ and $\gl{X}_{E^*,M}$ defined in Section \ref{topology}. By Theorem \ref{regularmeasure}, we can find pairwise disjoint clopen subsets $U_1,\ldots,U_r$ of $\gl{X}_{E,M}$ and pairwise disjoint clopen subsets $V_1,\ldots,V_s$ of $\gl{X}_{E^*,M}$ such that:
		\begin{itemize}
			\item $\mu(U_1\cup\ldots\cup U_r)=\nu(V_1\cup\ldots\cup V_s)=1$,
			\item each $U_i$ is defined by a Boolean combination of formulas of the form $E(x,b)$ and $\neg E(x,b)$,
			\item each $V_j$ is defined by a Boolean combination of formulas of the form $E(a,y)$ and $\neg E(a,y)$,
			\item for every $1\leq i\leq r$ and $1\leq j\leq s$, the relation $E\vert_{U_i\times V_j}$ is $\eps$-homogeneous with respect to the measure $\lambda=\mu\otimes\nu$.
		\end{itemize}
		Moreover, every $U_i$ has positive $\mu$-measure and every $V_i$ has positive $\nu$-measure, so there exists a natural number $n_1$ such that $\mu(U_i),\nu(V_i)>1/n_1$ for every $i\geq 1$. Now let $n_2$ be the maximum of the complexities of the Boolean combinations defining each $U_i$ and each $V_i$.
		
		By saturation of $G$ and invariance of $\lambda$, we may assume that the sets $U_i$ and $V_i$ are definable over $G$. Hence, for $\mcl{U}$-almost every natural number $n$, we have that the sets $U_1(G_n),\ldots,U_r(G_n)$ are pairwise disjoint, the sets $V_1(G_n),\ldots,V_s(G_n)$ are pairwise disjoint, the relation $E_n\vert_{U_i(G_n)\times V_j(G_n)}$ is $\eps$-homogeneous for every $i$ and $j$, and $\mu_n(U_1(G_n)\cup\ldots\cup U_r(G_n))>1-\delta$ and $\nu_n(V_1(G_n)\cup\ldots\cup V_s(G_n))>1-\delta$. These two latter conditions are equivalent to
		\begin{align*}
			\abs{U_1(G_n)\cup\ldots\cup U_r(G_n)}>(1-\delta)\cdot\abs{U_n} 
		\end{align*}
		and
		\begin{align*}
			\abs{V_1(G_n)\cup\ldots\cup V_s(G_n)}>(1-\delta)\cdot\abs{V_n}.
		\end{align*}
		Finally, note that for $n>\max\{n_1,n_2\}$, the complexities of the Boolean combinations defining each $U_i(G_n)$ and each $V_i(G_n)$ are at most $n$, and $\abs{U_i(G_n)}\geq\abs{U_n}/n$ and $\abs{V_i(G_n)}\geq\abs{V_n}/n$. This contradicts the first paragraph of the proof.
	\end{proof}
	
	\begin{remark}
		Observe that the homogeneity parameter $\eps>0$ in Theorem \ref{fgraphs} is not a function of the number of pieces of the partitions, but rather a constant fixed beforehand. Thus, since $U_0$ and $V_0$ are of constant proportion, if we drop the condition that each $U_i$ and each $V_i$ is a Boolean combination of neighbourhoods of the form $E(x,b)$ and $\neg E(x,b)$, and $E(a,y)$ and $\neg E(a,y)$, respectively, then we can proportionally redistribute the sets $U_0$ and $V_0$ among the other pieces of the partitions without ruining homogeneity (see the second paragraph after Theorem 2.1.2 in \cite{CT23}). While this would change the exact level of homogeneity, it would yield completely homogeneous partitions, like in stable graphs. Indeed, the actual difference between stable graphs and graphs with few witnesses to instability has to do with a more subtle change in the level of \emph{goodness} present in the partitions, as it can be clearly seen in the statement of \cite[Lemma 6.14]{TW21}.
	\end{remark}

	Finally, we prove a regularity lemma for almost stable subsets of groups, extending \cite[Theorem 5.25]{TW23} to arbitrary finite groups, yet without providing effective bounds. Given a subset $A$ of a group $G$, define
	\begin{align*}
		\textrm{Cay}(G,A)=\{(g,h)\in G\times G:g^{-1}\cdot h\in A\}.
	\end{align*}
	
	\begin{theorem}\label{fgroups}
		For every natural number $k$ and every real number $\eps>0$, there exists a natural number $n=n(k,\eps)$ such that: for every finite group $G$ and every subset $A$ of $G$, if $\abs{\mcl{H}_k(\emph{Cay}(G,A))}<\abs{G}^{2k}/n$, then there exists a normal subgroup $H$ of $G$ with index at most $n$, and a set $B$ that is union of cosets of $H$ such that $\abs{A\triangle B}<\eps\cdot\abs{H}$.
	\end{theorem}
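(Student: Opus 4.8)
We argue by contradiction and transfer the infinite group statement Theorem \ref{almoststable} back to the finite setting by an ultraproduct and compactness argument, exactly as Theorem A was deduced from Theorem \ref{regularmeasure}. Suppose the conclusion fails for some $k\in\NB$ and some $\eps>0$. Then for every $m\in\NB$ (used at once as the index bound $n$ and as $1/\eta$) there is a finite group $G_m$ and a subset $A_m\subseteq G_m$ with $\abs{\mcl{H}_k(\textrm{Cay}(G_m,A_m))}<\frac{1}{m}\abs{G_m}^{2k}$ for which no normal subgroup $H\leq G_m$ of index at most $m$ admits a union $B$ of cosets of $H$ with $\abs{A_m\triangle B}<\eps\abs{H}$. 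Testing this with $H=\{e\}$ (of index $\abs{G_m}$) and $B=A_m$ shows that $\abs{G_m}>m$, so in particular $\abs{G_m}\geq m$ for all $m$.

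First I would pass to the ultraproduct. Regard each $G_m$ as a structure in a countable language $L$ containing the group operation and a unary predicate for $A_m$, fix a non-principal ultrafilter $\mcl{U}$ on $\NB$, and set $G=\prod_{m\to\mcl{U}}G_m$, an $\aleph_1$-saturated group with an $\emptyset$-definable internal subset $A=\prod_{m\to\mcl{U}}A_m$. As in the setup preceding Lemma \ref{fubini}, applied with $U_m=V_m=G_m$, the normalized counting measures on the $G_m$ induce $\emptyset$-definable global Keisler measures $\mu=\mu_x$ and $\nu=\nu_y$ on $T=\textrm{Th}(G)$, which satisfy Fubini--Tonelli by (the proof of) Lemma \ref{fubini}. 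Since counting measure on a finite group is invariant under left and right translations and under inversion, the associated null ideals $\ideal_x,\ideal_y$ are $\emptyset$-invariant $S_1$-ideals (Example \ref{measures}) consisting of the same formulas up to relabelling, invariant under left and right translations by $G$, and $G\notin\ideal$. Let $\varphi(x,y)$ be the formula $y\cdot x\in A$, which is equivariant in the sense of Section \ref{groups}. The map $(c_1,d_1,\ldots,c_k,d_k)\mapsto(d_k\inv,c_k,\ldots,d_1\inv,c_1)$ is a cardinality-preserving bijection of $G_m^{2k}$ carrying $\mcl{H}_k(\varphi(G_m))$ onto $\mcl{H}_k(\textrm{Cay}(G_m,A_m))$, so $\abs{\mcl{H}_k(\varphi(G_m))}<\frac{1}{m}\abs{G_m}^{2k}$ for all $m$; hence, by Lemma \ref{finitestr}, $\varphi(x,y)$ is almost sure stable of ladder $k$ with respect to $\mu$ and $\nu$, that is, $\ideal$-stable for $\ideal=(\ideal_x,\ideal_y)$.

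Next I would invoke the group machinery of Section \ref{groups}, whose standing hypotheses are now all met (working in a monster model containing $G$), and fix a countable elementary substructure $M\prec G$. By Theorem \ref{almoststable} there is an $M$-definable finite-index normal subgroup $H$ of $G$ and a finite union $B$ of cosets of $H$ with $A\triangle B\in\ideal$, i.e. $\mu(A\triangle B)=0$. Put $n_0=[G:H]$ and fix coset representatives so that $H$ and $B$ are definable over $G$. To finish, transfer back: the sentences asserting that $H$ is a normal subgroup of index $n_0$ and that $B$ is a union of cosets of $H$ are first-order (in $L$ expanded by the relevant parameters) and hold in $G$, hence hold in $G_m$ for $\mcl{U}$-almost every $m$ by {\L}o\'s's theorem; and $\mu(A\triangle B)=0$ unwinds to $\lim_{m\to\mcl{U}}\abs{A_m\triangle B(G_m)}/\abs{G_m}=0$, so $\abs{A_m\triangle B(G_m)}<\frac{\eps}{n_0}\abs{G_m}=\eps\abs{H(G_m)}$ for $\mcl{U}$-almost every $m$. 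As $\mcl{U}$ is non-principal we also have $m\geq n_0$ for $\mcl{U}$-almost every $m$. Choosing $m$ in the intersection of these three sets of $\mcl{U}$-measure one yields a finite group $G_m$ possessing a normal subgroup $H(G_m)$ of index $n_0\leq m$ and a union $B(G_m)$ of its cosets with $\abs{A_m\triangle B(G_m)}<\eps\abs{H(G_m)}$, contradicting the choice of $G_m$.

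I expect the main difficulty to be organizational rather than conceptual: one must match the half-graph set $\mcl{H}_k(\textrm{Cay}(G,A))$ featured in the hypothesis with the half-graphs of the relation $y\cdot x\in A$ actually used in Section \ref{groups} (through the inversion-and-reindexing bijection above), and check that the one-sorted group setting genuinely fits the two-sorted measure-theoretic framework behind Lemmas \ref{fubini} and \ref{finitestr}. A secondary subtlety is that the construction does not produce a bound $n$ uniformly in $k,\eps$ --- the index $n_0=[G:H]$ depends on the ultraproduct --- but this is harmless in a proof by contradiction, since $n_0\leq m$ for $\mcl{U}$-almost every $m$.
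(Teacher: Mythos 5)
Your proposal is correct and follows essentially the same route as the paper: contradiction, ultraproduct with the counting-measure Keisler measure, the inversion-and-reindexing bijection identifying $\mcl{H}_k(\mathrm{Cay}(G_m,A_m))$ with $\mcl{H}_k(\varphi(G_m))$ for $\varphi(x,y)=A(y\cdot x)$, Lemma \ref{finitestr} to get almost sure stability, Theorem \ref{almoststable} for $H$ and $B$, and transfer back via {\L}o\'s. The points you flag as potential subtleties (matching the half-graph sets, realizing the parameters of $H$ and $B$ in $G$ by saturation, and the non-uniformity of $n_0$ being harmless in a proof by contradiction) are exactly the ones the paper handles, in the same way.
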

	
	\begin{proof}
		If the theorem is false, then there exists a natural number $k$ and a real number $\eps>0$ for which there is a family of finite groups $(G_n)_{n\in\NB}$, each equipped with a distinguished subset $A_n$, such that $\abs{\mcl{H}_k(\textrm{Cay}(G_n,A_n))}/\abs{G_n}^{2k}<1/n$, and for every normal subgroup $H$ of $G_n$ with index $[G_n:H]\leq n$ and any set $B$ that is union of cosets of $H$, we have $\abs{A_n\triangle B}\geq \eps\cdot\abs{H}$. Observe that
		\begin{align*}
			\frac{\abs{A_n\triangle B}}{\abs{G_n}}\geq\frac{\eps}{[G_n:H]}.
		\end{align*}
		Note also that $\abs{G_n}>n$ for every $n$, for otherwise $H$ can be taken to be the trivial group, which has index $n$ in $G_n$, and $A_n=B$.
		
		As in the proof of Theorem \ref{fgraphs}, we will work in a suitable countable language $L$ expanding the language of groups with a predicate $A(x)$ for the sets $A_n$. Regarding the setting described at the beginning of this section, we are now in the particular case where the sets $U_n$ and $V_n$ corresponding with the sorts $U(x)$ and $V(y)$ are both equal to $G_n$.
		\begin{claim*}
			Let $\varphi(x,y)=A(y\cdot x)$. Then $\abs{\mcl{H}_k(\emph{Cay}(G_n,A_n))}=\abs{\mcl{H}_k(\varphi(G_n))}$.
		\end{claim*}
		
		\begin{claimproof*}
			A tuple $(a_1,b_1,\ldots,a_k,b_k)$ belongs to $\mcl{H}_k(\textrm{Cay}(G_n,A_n))$ if and only if the tuple $(c_1,d_1,\ldots,c_k,d_k)$ defined as $c_i=b_{k-i+1}$ and $d_i=a^{-1}_{k-i+1}$ belongs to $\mcl{H}_k(\varphi(M_n))$. In other words, there is a bijection from $\mcl{H}_k(\textrm{Cay}(G_n,A_n))$ into $\mcl{H}_k(\varphi(G_n))$, as we wanted to prove.
		\end{claimproof*}
		Let $\mcl{U}$ be a non-principal ultrafilter on $\NB$, and let $G=\prod_{n\to\mcl{U}}G_n$. By the claim and Lemma \ref{finitestr}, the formula $A(y\cdot x)$ is almost sure stable of ladder $k$ in the theory $T=\textrm{Th}(G)$ with respect to $\mu$ in both variables $x$ and $y$, where $\mu$ is the $\emptyset$-definable global measure induced by the counting measures $\mu_n$ that we have on every $G_n$.
		
		By Theorem $\ref{almoststable}$ and saturation of $G$, there exists a definable normal subgroup $H$ of $G$ with finite index, say $N$, and a set $B$ that is union of cosets of $H$, such that $\mu(A\triangle B)=0$. Then there is some $n\geq N$ such that $H(G_n)$ is a normal subgroup of $G_n$ with index $N\leq n$, the set $B(G_n)$ is union of cosets of $H(G_n)$ and
		\begin{align*}
			\frac{\abs{A(G_n)\triangle B(G_n)}}{\abs{G_n}}<\frac{\eps}{N}=\frac{\eps}{[G_n:H(G_n)]},
		\end{align*}
		which is a contradiction. Hence, the theorem holds.
	\end{proof}
	
	\begin{remark}\label{compTW}
		Our stability assumption is stronger than the one of \cite[Theorem 5.25]{TW23}, where one only bounds trees with leaves in a fixed subgroup of $G$. This is equivalent to bounding the number of half-graphs with one side in that subgroup (see Theorem \ref{tree2}). In particular, when this fixed subgroup is the whole group $G$, then Theorem \ref{fgroups} corresponds to the case where we take $\Omega=\emptyset$ in \cite[Theorem 5.25]{TW23}. In that case, their theorem implies that $\abs{A\triangle B}<\eps\cdot\abs{G}$, which is not as strong as the conclusion provided by Theorem \ref{fgroups}, namely, that $\abs{A\triangle B}<\eps\cdot\abs{H}$.
	\end{remark}
	
	\appendix
	
	\section*{Appendix}\label{apptrees}
	
	\setcounter{section}{1}
	\setcounter{theorem}{0}
	\counterwithin{theorem}{section}
	\renewcommand{\thetheorem}{A.\arabic{theorem}}

	We now revisit a well-known result by Hodges, namely \cite[Lemma 6.7.9]{WH93}. We follow the lines of Hodges's proof, but we use the notion of subtree from \cite{NA22} (where, indeed, a combinatorial proof of the same fact can be found \cite[Theorem 10]{NA22}). This notion eases the verification of certain steps in Hodges's original proof, and therefore we have included it here for the sake of completeness.
	
	\begin{notation}
		We will use the following notation: $^n2$ is the set of all functions of the form $\sigma:\{0,\ldots,n-1\}\to\{0,1\}$, $^{<n}2=\bigcup_{i<n}{^i2}$ and $^{<\omega}2=\bigcup_{n\in\NB}{^n2}$. As usual, we identify the functions of $^{n}2$ with finite sequences whose terms are either $0$ or $1$. If $\sigma\in{^n2}$ and $j\in\{0,1\}$, then $\sigma^\smallfrown j\in{^{n+1}2}$ is the sequence $(\sigma(0),\ldots,\sigma(n-1),j)$, and if $j\leq n$, then $\sigma\vert j\in{^j2}$ is the initial segment of $\sigma$ of length $j$.
	\end{notation}
	
	\begin{definition}\label{bintree}
		Let $\varphi(x,y)$ be a formula in $L$. An $n$\emph{-tree} or a \emph{tree of height} $n$ for $\varphi$ consists of two families of tuples, $(a_\sigma:\sigma\in{^n2})$ and $(b_\tau:\tau\in{^{<n}2})$, such that
		\begin{align*}
			\varphi(a_\sigma,b_{\sigma\vert i})\textrm{ holds if and only if }\sigma(i)=0.
		\end{align*}
		The tuples $a_\sigma$ are the \emph{branches} of the tree, while the tuples $b_\tau$ are its \emph{nodes}. The nodes $b_\tau$ with $\tau\in{^{n-1}2}$ (that is, the nodes of the ``lowest'' level of the tree) are called \emph{leaves}. An \emph{internal node} is a node which is not a leaf. If we fix an internal node $b_\tau$, the \emph{cone rooted} at $b_\tau$ is the set of nodes
		\begin{align*}
			\{b_{\tau^\prime}:\tau^\prime\in{^{<n}2}\textrm{ and }\tau^\prime\textrm{ extends }\tau\}.
		\end{align*}
	\end{definition}
	
	\begin{definition}\cite[Appendix A]{NA22}
		Let $T$ be an $n$-tree for $\varphi$. The notion of $m$\emph{-subtree} of $T$ is defined by induction on $m\geq 1$ as follows. The $1$-subtrees are the leaves of $T$. An $m$-subtree is a set of nodes of $T$ obtained from an arbitrary internal node $b_\tau$, together with an $(m-1)$-subtree contained in the cone rooted at $b_{\tau^\smallfrown 0}$ and an $(m-1)$-subtree contained in the cone rooted at $b_{\tau^\smallfrown 1}$. The \emph{root} of such subtree is the node $b_\tau$. The root of a $1$-subtree is the only node it contains.
	\end{definition}
	
	In the next two results we show that every subtree can be transformed into a tree adding to it some branches of the original tree.
	
	\begin{lemma}\label{difleaves}
		Let $T$ be an $n$-tree for $\varphi$ and let $H$ be an $m$-subtree. Given two different leaves $b_\sigma$ and $b_{\sigma^\prime}$ of $T$ which lie on $H$, define $l_0=\max\{l:\sigma\vert l=\sigma^\prime\vert l\}$. Then $l_0<n-1$ and $b_{\sigma\vert l_0}=b_{\sigma^\prime\vert l_0}\in H$.
	\end{lemma}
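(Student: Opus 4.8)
The plan is to argue by induction on $m$, proving the statement simultaneously for all $n$-trees $T$. Two parts of the conclusion are essentially free and I would dispatch them at the outset: since $\sigma$ and $\sigma^\prime$ are distinct elements of ${^{n-1}2}$ they disagree at some coordinate $l<n-1$, whence $l_0\leq l<n-1$; and $b_{\sigma\vert l_0}=b_{\sigma^\prime\vert l_0}$ holds simply because $\sigma\vert l_0=\sigma^\prime\vert l_0$ by the very definition of $l_0$. The real content is that this common node lies in $H$. For the base case $m=1$ the subtree $H$ is a single leaf, so it contains no two distinct leaves and the statement is vacuous.

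For the inductive step, write $H=\{b_\tau\}\cup H_0\cup H_1$ as in the definition of $m$-subtree, where $b_\tau$ is an internal node with $\tau\in{^r2}$ (so $r\leq n-2$), $H_0$ is an $(m-1)$-subtree contained in the cone rooted at $b_{\tau^\smallfrown 0}$, and $H_1$ is an $(m-1)$-subtree contained in the cone rooted at $b_{\tau^\smallfrown 1}$. Since $b_\tau$ is not a leaf and the two cones are disjoint, each of $b_\sigma$ and $b_{\sigma^\prime}$ lies in exactly one of $H_0$, $H_1$. If they lie in different pieces, say $b_\sigma\in H_0$ and $b_{\sigma^\prime}\in H_1$, then $\sigma$ extends $\tau^\smallfrown 0$ and $\sigma^\prime$ extends $\tau^\smallfrown 1$, so $\sigma\vert r=\tau=\sigma^\prime\vert r$ while $\sigma(r)\neq\sigma^\prime(r)$; hence $l_0=r$ and $b_{\sigma\vert l_0}=b_\tau\in H$. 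If instead both lie in, say, $H_0$, then $\sigma$ and $\sigma^\prime$ both extend $\tau^\smallfrown 0$. Re-indexing the cone rooted at $b_{\tau^\smallfrown 0}$ in the obvious way presents it as an $(n-r-1)$-tree for $\varphi$ of which $H_0$ is an $(m-1)$-subtree, and writing $\sigma=(\tau^\smallfrown 0)^\smallfrown\bar\sigma$, $\sigma^\prime=(\tau^\smallfrown 0)^\smallfrown\bar\sigma^\prime$ one checks that $l_0=(r+1)+\bar l_0$ where $\bar l_0=\max\{l:\bar\sigma\vert l=\bar\sigma^\prime\vert l\}$. The induction hypothesis applied inside this re-indexed cone yields that the node indexed by $\bar\sigma\vert\bar l_0$ lies in $H_0$, and that node is precisely $b_{\sigma\vert l_0}$; thus $b_{\sigma\vert l_0}\in H_0\subseteq H$. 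The case of both leaves in $H_1$ is identical.

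The only point that warrants care is the re-indexing used in the last case: one must verify that restricting $T$ to a cone yields, after shifting indices, a shorter tree for the same formula $\varphi$, and that an $(m-1)$-subtree of $T$ contained in that cone becomes an $(m-1)$-subtree of the shorter tree. Both facts are straightforward inductions on the respective definitions, since the notion of subtree refers only to the cone structure, which the restriction preserves, so I would record them as preliminary observations and leave the routine checks to the reader.
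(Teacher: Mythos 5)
Your proof is correct and follows essentially the same route as the paper: induction on $m$, splitting according to whether the two leaves fall into different cones below the root $b_\tau$ (in which case $l_0$ is the length of $\tau$ and the common node is $b_\tau$ itself) or into the same cone (in which case one invokes the induction hypothesis). The only difference is that your re-indexing of the cone as an $(n-r-1)$-tree is superfluous: since an $(m-1)$-subtree contained in a cone of $T$ is by definition already an $(m-1)$-subtree of $T$, and the statement being proved is phrased for leaves and indices of $T$, the induction hypothesis (on $m$, with $T$ fixed) applies to $H_0$ directly, which is how the paper avoids the preliminary observations you defer to the reader.
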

	
	\begin{proof}
		Since $b_\sigma\neq b_{\sigma^\prime}$, then $l_0<n-1$. The other part of the statement is proved by induction on $m\geq 2$. For $m=2$ it follows by definition of subtree. Suppose that the result is true for every $(m-1)$-subtree of $T$, and let $H$ be an $m$-subtree whose root is a node $b_\tau$. If $\sigma=\tau^\smallfrown 0\cdots$ and $\sigma^\prime=\tau^\smallfrown 1\cdots$ (or vice versa), then we have that $b_{\sigma\vert l_0}=b_{\sigma^\prime\vert l_0}=b_\tau\in H$. Otherwise, both $b_\sigma$ and $b_{\sigma^\prime}$ lie on an $(m-1)$-subtree $H^\prime$ contained in the cone rooted at, say, $b_{\tau^\smallfrown 0}$. By induction hypothesis, $b_{\sigma\vert l_0}=b_{\sigma^\prime\vert l_0}\in H^\prime\subset H$, as required.
	\end{proof}
	
	\begin{lemma}\label{subtree}
		Let $T$ be an $n$-tree for $\varphi$, and let $H$ be an $m$-subtree. Then $T$ contains branches $(c_\sigma:\sigma\in{^m}2)$ such that the nodes of $H$, together with these branches, form an $m$-tree for $\varphi$.
	\end{lemma}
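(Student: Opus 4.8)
The plan is to prove the lemma by induction on $m\geq 1$, following the recursive definition of $m$-subtree; the branches will be extracted from the ambient $n$-tree $T$ by going one level below those leaves of $T$ that lie on $H$.

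For $m=1$ the subtree $H$ is a single leaf $b_\sigma$ with $\sigma\in{}^{n-1}2$, and it suffices to take $c_{(0)}=a_{\sigma^\smallfrown 0}$ and $c_{(1)}=a_{\sigma^\smallfrown 1}$, since by definition of $n$-tree the formula $\varphi(c_{(j)},b_\sigma)=\varphi(a_{\sigma^\smallfrown j},b_{(\sigma^\smallfrown j)\vert(n-1)})$ holds exactly when $j=0$. For the inductive step, write $H$ as a root $b_\tau$ together with an $(m-1)$-subtree $H_0$ contained in the cone rooted at $b_{\tau^\smallfrown 0}$ and an $(m-1)$-subtree $H_1$ contained in the cone rooted at $b_{\tau^\smallfrown 1}$. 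Applying the induction hypothesis to $H_0$ and to $H_1$ yields branches $(c^0_\rho:\rho\in{}^{m-1}2)$ and $(c^1_\rho:\rho\in{}^{m-1}2)$ of $T$ completing these subtrees to $(m-1)$-trees. After identifying the nodes of $H$ with ${}^{<m}2$ in the obvious way (the root $b_\tau$ at the empty string, and the node of $H_j$ at position $\kappa$ at the string $j^\smallfrown\kappa$), one sets $c_{j^\smallfrown\rho}=c^j_\rho$ for $j\in\{0,1\}$ and $\rho\in{}^{m-1}2$.

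It then remains to verify the defining equivalence of Definition \ref{bintree} for the family $(c_\rho)$ and the nodes of $H$. For a string $\rho=j^\smallfrown\rho'\in{}^m2$, the comparisons of $c_\rho$ with the nodes of $H$ strictly below $b_\tau$ all take place inside $H_j$ and are supplied by the induction hypothesis; the only genuinely new comparison is with the root $b_\tau$, and this is where one uses that every branch produced by the induction hypothesis for $H_j$ is of the form $a_\mu$ with $\mu$ extending $\tau^\smallfrown j$ — a property worth carrying along as part of the inductive statement — so that $\varphi(c_\rho,b_\tau)=\varphi(a_\mu,b_{\mu\vert\abs{\tau}})$ holds iff $\mu(\abs{\tau})=0$, i.e. iff $j=0=\rho(0)$. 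Lemma \ref{difleaves} is precisely the combinatorial input behind this: it ensures that, for two distinct leaves $b_\sigma,b_{\sigma'}$ of $T$ lying on $H$, the node of $H$ at which their paths to the root diverge is the initial-segment node $b_{\sigma\vert l_0}$ with $l_0<n-1$, so that the truth value of $\varphi$ at any node of $H$ lying above a leaf $b_\sigma$ of $T$ can be read off directly from the bits of $\sigma$ in the ambient tree.

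The main obstacle is exactly this matching of indices: one must check that the ${}^{<m}2$-indexing of the nodes of $H$ coming from its recursive construction is compatible with the ${}^{<n}2$-indexing of the nodes of $T$ — concretely, that passing to the $j$-child within $H$ corresponds to appending a $j$ at the relevant place in the index of the branches below it — and this compatibility is what the cone conditions in the definition of subtree, together with Lemma \ref{difleaves}, are there to guarantee. Once this is in place the induction is routine, so the heart of the matter is organizing the bookkeeping cleanly.
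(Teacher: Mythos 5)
Your argument is correct, and it reorganizes the proof in a way that differs from the paper's. The paper argues directly, without induction on $m$: for each leaf $b_\sigma$ of $T$ lying on $H$ it records the levels $i_0<\dots<i_{m-1}$ at which the path to $\sigma$ meets $H$, defines the two new branch indices $\sigma_k=(\sigma(i_0),\ldots,\sigma(i_{m-2}),k)$ explicitly, takes $c_{\sigma_k}=a_{\sigma^\smallfrown k}$, and verifies the defining equivalence in one line via $\sigma^\smallfrown k(i_j)=\sigma_k(j)$; Lemma \ref{difleaves} is then used precisely to show that distinct leaves produce distinct index strings, i.e.\ that the map $\rho\mapsto c_\rho$ is well defined on all of ${}^m2$. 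You instead follow the recursive definition of $m$-subtree and prove the statement by induction on $m$, strengthened by the invariant that every branch produced for a subtree with root $b_\kappa$ is of the form $a_\mu$ with $\mu$ extending $\kappa$ (so that the comparison with the root of the next level up reads off the correct bit). Both proofs extract exactly the same branches --- the two children $a_{\sigma^\smallfrown 0},a_{\sigma^\smallfrown 1}$ below each leaf of $T$ on $H$ --- so the substance is the same; the trade-off is that your induction makes the indexing automatically consistent, rendering Lemma \ref{difleaves} essentially superfluous (your appeal to it is decorative rather than load-bearing), at the cost of having to carry the extra invariant, whereas the paper's direct computation needs Lemma \ref{difleaves} as a genuine injectivity input but avoids any strengthening of the statement. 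To turn your sketch into a complete proof you would only need to state the strengthened inductive hypothesis precisely (the branches extend the root's index, with the bit at position $\abs{\tau}$ equal to $\rho(0)$); as written this is flagged but not pinned down, which is the one place where a careless formulation could go wrong.
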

	
	\begin{proof}
		Let $b_\sigma\in H$ be a leaf of $T$. By definition of subtree it follows that there are $i_0,\ldots,i_{m-1}$ such that $b_{\sigma\vert i_0},\ldots,b_{\sigma\vert i_{m-1}}\in H$. Observe that $i_{m-1}=n-1$, so $b_{\sigma\vert i_{m-1}}=b_\sigma$. Now define $\sigma_0,\sigma_1\in{^m2}$ as follows:
		\begin{align*}
			&\sigma_0=(\sigma(i_0),\ldots,\sigma(i_{m-2}),0),\\
			&\sigma_1=(\sigma(i_0),\ldots,\sigma(i_{m-2}),1).
		\end{align*}
		Let $c_{\sigma_0}=a_{\sigma^\smallfrown 0}$ and $c_{\sigma_1}=a_{\sigma^\smallfrown 1}$, which are branches of $T$. These branches correspond with the nodes $d_{\sigma_0\vert j}=d_{\sigma_1\vert j}=b_{\sigma\vert i_j}$, for $j=0,\ldots,m-1$. Then, for $k=0,1$ and $j=0,\ldots,m-1$, we have
		\begin{align*}
			\varphi(c_{\sigma_k},d_{\sigma_k\vert j})\Leftrightarrow\varphi(a_{\sigma^\smallfrown k},b_{\sigma\vert i_j})\Leftrightarrow\sigma^\smallfrown k(i_j)=0\Leftrightarrow\sigma_k(j)=0.
		\end{align*}
		
		To conclude, we must prove that these branches are well-defined, that is, different leaves induce different branches. In this way, the $2^{m-1}$ leaves of $T$ that lie on $H$ induce $2^{m}$ different branches, as desired. Given two different leaves $b_\sigma$ and $b_\tau$ of $T$ in $H$, define $l_0=\max\{l:\sigma\vert l=\tau\vert l\}$. It follows by Lemma \ref{difleaves} that $l_0<n-1$ and $b_{\sigma\vert l_0}=b_{\tau\vert l_0}\in H$. Hence, for $k=0,1$ we have
		\begin{align*}
			&\sigma_k=(\sigma(i_0),\ldots,\sigma(l_0),\ldots,k),\\
			&\tau_k=(\sigma(i_0),\ldots,\tau(l_0),\ldots,k),
		\end{align*}
		and $\sigma(l_0)\neq\tau(l_0)$. So the branches $\sigma_0$ and $\sigma_1$ are both different from the branches $\tau_0$ and $\tau_1$, as we wanted to prove.
	\end{proof}
	
	Henceforth, the term \emph{subtree} will not denote merely a set of nodes, but rather the actual tree obtained as in Lemma \ref{subtree}.
	
	\begin{notation}
		Let $T$ be an $n$-tree for $\varphi$, and for $j=0,1$ let $T_j$ be the cone rooted at the node $b_j$, which is an $(n-1)$-subtree of $T$.
	\end{notation}
	
	\begin{lemma}\label{ramseytree}
		Let $p$ and $q$ be non-negative integers with $p+q\geq 1$, and let $T$ be a $(p+q)$-tree for $\varphi$. If the nodes of $T$ are partitioned into two sets $P$ and $Q$, then either $P$ contains $p$-subtree or $Q$ contains a $q$-subtree.
	\end{lemma}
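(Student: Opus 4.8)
The plan is to run the classical root-splitting Ramsey argument for binary trees, by induction on $n=p+q$. First I would clear the degenerate indices: if $p=0$ (respectively $q=0$) the conclusion ``$P$ contains a $p$-subtree'' (respectively ``$Q$ contains a $q$-subtree'') holds by convention, so one may assume $p,q\geq 1$. The base case is then $p=q=1$: here $T$ is a $2$-tree, whose leaves are precisely $b_0$ and $b_1$, each of which is a $1$-subtree of $T$; since $b_0$ and $b_1$ are distributed between $P$ and $Q$, one of the two sets contains a $1$-subtree.

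For the inductive step take $n=p+q\geq 3$ with $p,q\geq 1$. The statement is symmetric under exchanging $(P,p)$ with $(Q,q)$, so I may assume that the root $b_\emptyset$ of $T$ lies in $P$. Consider the cones $T_0$ and $T_1$ rooted at $b_0$ and $b_1$: by Lemma~\ref{subtree} and the convention adopted immediately after it, each $T_j$ is an $(n-1)$-tree for $\varphi$, and the partition $\{P,Q\}$ of the node set of $T$ restricts to a partition of the node set of each $T_j$. Now I would apply the induction hypothesis to $T_0$ with the pair $(p-1,q)$, which is allowed since $(p-1)+q=n-1\geq 1$: either $Q$ contains a $q$-subtree sitting inside $T_0$ — in which case I am done, because a $q$-subtree of a cone of $T$ is in particular a $q$-subtree of $T$ — or else $P$ contains a $(p-1)$-subtree $H_0$ with $H_0\subseteq T_0$. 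Doing the same with $T_1$, I either finish for the same reason or obtain a $(p-1)$-subtree $H_1\subseteq T_1$ contained in $P$. In the remaining case $b_\emptyset$ is an internal node of $T$ (as $n\geq 2$) lying in $P$, so by the recursive definition of a $p$-subtree with root $b_\emptyset$ the set $\{b_\emptyset\}\cup H_0\cup H_1$ is a $p$-subtree of $T$ contained in $P$; this closes the induction.

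The argument has no genuinely hard step; its only delicate part is the bookkeeping around the degenerate indices (the cases $p=0$, $q=0$, and the step where the reduction reaches $p-1=0$ or $q-1=0$), where one has to fix the convention about $0$-subtrees so that the base case and the inductive step go through uniformly, and one has to check that a $q$-subtree found inside a cone $T_j$ really is a $q$-subtree of $T$ — both of which follow at once from the definitions and Lemma~\ref{subtree}. Once these are in place, the recombination ``internal node together with one $(p-1)$-subtree in each cone'' is literally the clause defining a $p$-subtree, so nothing further is needed.
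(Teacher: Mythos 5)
Your argument is the same root-splitting induction on $p+q$ that the paper uses, and for the generic case it matches the paper's proof step for step. The genuine problem sits exactly at the point you set aside as bookkeeping: the recombination when the recursion reaches $p-1=0$. You claim that $\{b_\emptyset\}\cup H_0\cup H_1=\{b_\emptyset\}$ is then a $1$-subtree ``by the recursive definition of a $p$-subtree with root $b_\emptyset$'', but the recursive clause of the definition only produces $m$-subtrees for $m\geq 2$; the base clause stipulates that the $1$-subtrees are the \emph{leaves} of $T$, and $b_\emptyset$ is not a leaf once $p+q\geq 2$. No convention about $0$-subtrees can repair this, because under that definition of $1$-subtree the statement itself is false. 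Take $T$ of height $3$, $(p,q)=(1,2)$, $P$ the set of internal nodes and $Q$ the set of leaves: $P$ contains no leaf, hence no $1$-subtree, while every $2$-subtree contains its root, which is an internal node, hence $Q$ contains no $2$-subtree. The same partition refutes the lemma for every $p,q\geq 1$ with $\max(p,q)\geq 2$, since every $m$-subtree with $m\geq 1$ contains a leaf of $T$ and every $m$-subtree with $m\geq 2$ contains an internal node; and since your induction reaches the general case only through instances with one index equal to $1$, the argument cannot be completed as written.

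To be fair, the paper's own proof has exactly the same defect (``these subtrees, together with $b_\emptyset$, \dots yield a $p$-subtree'' is asserted with no comment on what happens when $p-1=0$), so you have reproduced its argument faithfully; the flaw lies in the definition of $1$-subtree rather than in your execution of the induction. Everything becomes correct if a $1$-subtree is taken to be an arbitrary singleton node of $T$: then $\{b_\emptyset\}$ is a $1$-subtree, your recombination at $p-1=0$ is literally the recursive clause, the counterexample above disappears, and Lemma \ref{subtree} survives with a minor adjustment (for a bottom node $b_\sigma$ of the subtree that is not a leaf of $T$, take as the two new branches any $a_{\sigma'}$ with $\sigma'$ extending $\sigma^\smallfrown 0$, respectively $\sigma^\smallfrown 1$). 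As written, however, the step you describe as following ``at once from the definitions'' is precisely the step that fails.
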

	
	\begin{proof}
		By induction on $p+q\geq 1$. The case $p+q=1$ is trivial, since $b_\emptyset$ is either in $P$ or in $Q$. Suppose that $p+q>1$, and assume without loss of generality that $b_\emptyset\in P$. For $j=0,1$ consider the $(p+q-1)$-subtree $T_j$ of $T$. By induction hypothesis, either $P\cap T_j$ contains a $(p-1)$-subtree or $(Q\cap T_j)$ contains a $q$-subtree. If at least one of $Q\cap T_0$ and $Q\cap T_1$ contains a $q$-subtree, then so does $Q$. Otherwise, both $P\cap T_0$ and $P\cap T_1$ contain a $(p-1)$-subtree. These subtrees, together with $b_\emptyset$, which is in $P$, yields the existence of a $p$-subtree in $P$.
	\end{proof}
	
	Finally, we reproduce Hodges's proof of \cite[Lemma 6.7.9]{WH93} using the previous results.
	
	\begin{theorem}\label{tree2}
		Let $T$ be a tree for $\varphi$ of height $2^{n+1}-2$. Then $T$ has branches $a_0,\ldots,a_{n-1}$ and nodes $b_0,\ldots,b_{n-1}$ such that $\varphi(a_i,b_j)$ holds if and only if $i\leq j$.
	\end{theorem}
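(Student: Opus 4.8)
I would prove this by induction on $n$. The base case $n=1$ is immediate: in a tree for $\varphi$ of height $2^{1+1}-2=2$ (already of height $1$, in fact), take $b_0$ to be the root and $a_0$ any branch running through its $0$-child; then $\varphi(a_0,b_0)$ holds by the very definition of a tree for $\varphi$.

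For the inductive step, assume $n\geq 2$ and the statement for $n-1$, and let $T$ be a tree for $\varphi$ of height $2^{n+1}-2$. Let $a^*$ be the leftmost branch of $T$, i.e. the one indexed by the constant-$0$ sequence; then $\varphi(a^*,b)$ holds for every node $b$ lying on $a^*$. Pass to the cone $T_0$ rooted at the $0$-child of the root of $T$: this is a tree for $\varphi$ of height $2^{n+1}-3$, and $a^*$ is its leftmost branch. Now $2$-colour each node $b$ of $T_0$, calling it \emph{true} if $\varphi(a^*,b)$ holds and \emph{false} otherwise. Since $2^{n+1}-3=(2^n-1)+(2^n-2)$, Lemma \ref{ramseytree} provides either a $(2^n-1)$-subtree of $T_0$ all of whose nodes are true, or else a $(2^n-2)$-subtree all of whose nodes are false. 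In either case Lemma \ref{subtree} converts the nodes of this subtree, together with suitable branches of $T_0$, into a genuine tree $S$ for $\varphi$ — of height $2^n-1$ in the first case, of height $2^n-2=2^{(n-1)+1}-2$ in the second — whose branches are branches of $T$ passing through the root of $T_0$ and all of whose nodes carry the same colour.

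In the \emph{true} case I would apply the induction hypothesis inside the cone rooted at the $1$-child of the root of $S$ (a tree of height $2^{(n-1)+1}-2$), obtaining after relabelling branches $a_1,\ldots,a_{n-1}$ and nodes $b_1,\ldots,b_{n-1}$ with $\varphi(a_i,b_j)$ holding iff $i\leq j$ for $i,j\geq 1$; then set $a_0:=a^*$ and let $b_0$ be the root of $S$. Here $\varphi(a_0,b_j)$ holds for every $j$ because each $b_j$ is a node of $S$, hence true; $\varphi(a_i,b_0)$ fails for $i\geq 1$ because each such $a_i$ passes through the $1$-child of $b_0$; and the remaining instances come from the induction hypothesis, so $\varphi(a_i,b_j)$ holds iff $i\leq j$ throughout $\{0,\ldots,n-1\}$. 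In the \emph{false} case $S$ already has the right height, so applying the induction hypothesis directly to $S$ yields $a_0,\ldots,a_{n-2}$ and $b_0,\ldots,b_{n-2}$ with $\varphi(a_i,b_j)$ iff $i\leq j$; then set $a_{n-1}:=a^*$ and let $b_{n-1}$ be the root of $T$ itself. Every $a_i$ with $i\leq n-2$ is a branch of $S$, hence passes through the $0$-child of $b_{n-1}$, so $\varphi(a_i,b_{n-1})$ holds; $\varphi(a^*,b_{n-1})$ holds likewise; and $\varphi(a^*,b_j)$ fails for $j\leq n-2$ since those $b_j$ are false. So again $\varphi(a_i,b_j)$ holds iff $i\leq j$, completing the induction.

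The step I expect to require the most care is exactly this dichotomy between the two colours. Colouring all of $T$ and recursing symmetrically is too crude: one colour class produces a half-graph for $\varphi$ while the other produces one only for $\neg\varphi$, and a height-$n$ half-graph for $\neg\varphi$ yields, after re-indexing, only a height-$(n-1)$ half-graph for $\varphi$ — a genuine loss of a level. Confining the colouring to the left cone $T_0$, so that the distinguished branch $a^*$ is present on both sides, and in the false case recycling the root of $T$ as the final node $b_{n-1}$, is what forces both alternatives to output a full height-$n$ half-graph for $\varphi$. The numerology $2^{n+1}-2=2(2^n-1)$ (equivalently $2^{n+1}-3=(2^n-1)+(2^n-2)$) is precisely the budget needed so that, after passing from $T$ to $T_0$, Lemma \ref{ramseytree} can be applied with parameters $2^n-1$ and $2^n-2$.
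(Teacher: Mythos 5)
Your proof is correct and rests on exactly the same combinatorial engine as the paper's (Lemmas \ref{subtree} and \ref{ramseytree} together with the height identity $2^{n+1}-3=(2^n-1)+(2^n-2)$); your ``true'' case is the paper's Case 1 (take the root of the positive subtree as the new node and recurse in its $1$-cone) and your ``false'' case is its Case 2 (recycle the root of the current tree as the appended node). The only difference is organizational: you run a forward induction on $n$, prepending or appending one pair per level, where the paper runs Hodges's downward induction maintaining the invariant $S_r$ with a prefix, a middle subtree and a suffix — the content is the same.
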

	
	\begin{proof}
		We will prove by induction on $n-r=0,\ldots,n-1$ that, for $1\leq r\leq n$, the following situation $S_r$ holds: for some $k=0,\ldots,n-r$ there are
		\begin{align*}
			a_0,b_0,\ldots,a_{k-1},b_{k-1},H,a_k,b_k,\ldots,a_{n-r-1},b_{n-r-1}
		\end{align*}
		such that:
		\begin{enumerate}
			\item[$(1)$] $H$ is a subtree of $T$ of height $2^{r+1}-2$,
			\item[$(2)$] for all $i,j<n-r$, the formula $\varphi(a_i,b_j)$ holds if and only if $i\leq j$,
			\item[$(3)$] if $b$ is a node of $H$, then $\varphi(a_i,b)$ holds if and only if $i<k$,
			\item[$(4)$] if $a$ is a branch of $H$, then $\varphi(a,b_j)$ holds if and only if $j\geq k$.
		\end{enumerate}
		The initial case $S_n$ states simply states that $H$ is a tree of height $2^{n+1}-2$ for $\varphi$, which is true for $H=T$. The statement of the theorem is implied by the case $S_1$ as follows. The existence of $H$, which is a $2$-subtree of $T$, together with Lemma \ref{subtree}, yields the existence of a branch $a$ and a node $b$, both of them of $T$ but in particular of $H$, such that $\varphi(a,b)$ holds. Hence, the sequence we are looking for is
		\begin{align*}
			a_0,b_0,\ldots,a_{k-1},b_{k-1},a,b,a_k,b_k,\ldots,a_{n-2},b_{n-2}.
		\end{align*}
		It remains to show that if $S_r$ holds, then so does $S_{r-1}$. Let $h=2^r-2$, so that $H$ is a tree of height $2h+2$ for $\varphi$. Recall that the nodes and branches of $H$ are nodes and branches of $T$. Now for each branch $a$ of $H$ write $H(a)$ for the set of nodes b of $H$ such that $\varphi(a,b)$ holds. There are two cases.
		
		Assume first that there is a branch $a$ of $H$ such that $H(a)$ contains an $(h+1)$-subtree whose root is some $b_\tau$. Then $H$ contains a node $b=b_\tau$ and an $h$-subtree $H^\prime$ contained in the cone rooted at $b_{\tau^\smallfrown 1}$. Replace $H$ by $a,b,H^\prime$ in that order.
		
		Assume now that for every branch $a$ of $H$, the set $H(a)$ contains no $(h+1)$-subtree. Let $b=b_\tau$ be the root of $H$, and let $a$ be any branch of $H_0$, which is a $(2h+1)$-tree for $\varphi$ rooted at a node of the form $b_{\tau^\smallfrown0\cdots}$. Since $H_0$ is a tree of height $(h+1)+h$ and $H_0\cap H(a)$ contains no $(h+1)$-subtree, then by Lemma \ref{ramseytree}, the set $H_0\setminus H(a)$ contains an $h$-subtree $H^\prime$. Replace $H$ by $H^\prime,a,b$ in that order.
		
		In either case, $S_{r-1}$ holds.
	\end{proof}

\end{document}